\theoremstyle{plain}
\newtheorem{theorem}{Theorem}[section]
\newtheorem{lemma}[theorem]{Lemma}
\newtheorem{corollary}[theorem]{Corollary}
\newtheorem{proposition}[theorem]{Proposition}
\theoremstyle{definition}
\newtheorem{definition}[theorem]{Definition}
\newtheorem{example}[theorem]{Example}
\theoremstyle{remark}
   \tikzset{
   modal/.style={>=stealth,shorten >=1pt,shorten <=1pt,auto,node distance=1.5cm,
   semithick},
   world/.style={circle,draw,minimum size=0.5cm,fill=gray!15},
   point/.style={circle,draw,inner sep=0.5mm,fill=black},
   reflexive above/.style={->,loop,looseness=7,in=120,out=60},
   reflexive below/.style={->,loop,looseness=7,in=240,out=300},
   reflexive left/.style={->,loop,looseness=7,in=150,out=210},
   reflexive right/.style={->,loop,looseness=7,in=30,out=330}
   }
\definecolor{david}{rgb}{0.0, 0.0, 1.0}
\newcommand{\gpp}{gpp}
\newcommand{\gpps}{gpps}
\newcommand{\ps}{(P_q, S_r)}
\newcommand{\ceilof}{\lceil\frac{n}{2} \rceil}
\newcommand{\avgdeg}[1]{2 - \frac{2}{#1} }
\newcommand{\minorder}{8}
\begin{document}
\pagestyle{myheadings}

\title{Most Laplacian eigenvalues of a tree are small}
\subjclass{05C50, 05C05, 15A18}
\keywords{Laplacian eigenvalue; tree; eigenvalue distribution}
\author[D. Jacobs]{David P. Jacobs}
\address{School of Computing, Clemson University, Clemson, USA} \email{\tt
dpj@clemson.edu}
\author[E. Oliveira]{Elismar Oliveira}
\address{Instituto de Matem\'atica e Estat\'{\i}stica, UFRGS, Porto Alegre, Brazil}
\email{\tt elismar.oliveira@ufrgs.br}
\author[V. Trevisan]{Vilmar Trevisan}
\address{Instituto de Matem\'atica e Estat\'{\i}stica, UFRGS, Porto Alegre, Brazil and Dipartimento di Matematica e Applicazioni, Universit\`a `Federico II', Napoli, Italia}
\email{\tt trevisan@mat.ufrgs.br}

\begin{abstract}
We show that the number of Laplacian eigenvalues less than the average degree $\avgdeg{n}$ of a tree
having $n$ vertices is at least $\ceilof$.
\end{abstract}

\maketitle

\section{Introduction}
\label{sec:intro}

For a graph $G$ of order $n$, the Laplacian matrix of $G$ is $L=D-A$, where
$A$ is the {\em adjacency} matrix and $D$ is the diagonal
{\em degree} matrix.  The
eigenvalues of $L$, which lie in the interval $[0,n]$, are called
\emph{Laplacian eigenvalues of $G$}.
Studying the distribution of Laplacian eigenvalues of graphs is a natural and
relevant problem. It is relevant due to the many applications related  to
Laplacian matrices (see, for example \cite{Moh92}). We believe it is also a
hard problem because little is known about how the Laplacian eigenvalues are
distributed in the interval $[0,n]$.
Even though there exist results that bound the number of
Laplacian eigenvalues in
{\em subintervals} of $[0, n]$
(see for example \cite{Gro94,Gro90,Guo2011,HJT2015,Mer91,ZhouZhouDu2015}
 and the references therein)
there lacks a finer understanding of the {\em distribution}. For
instance, it is not known whether smaller eigenvalues outnumber the larger
ones,
and little known about how eigenvalues are clustered around a point \cite{Guo-2008}.

We first consider the question {\em What is a large Laplacian eigenvalue?} A
reasonable measure is to compare this eigenvalue with the average of all
eigenvalues.
Since the average of the Laplacian eigenvalues equals the average
degree $d_n = \frac{2m}{n}$ of $G$, we say that a Laplacian eigenvalue is
\emph{large} if it is greater than or equal to the average degree,
and {\em small} otherwise.
Inspired by this idea, the paper~\cite{Das16} introduces the spectral parameter
$\sigma(G)$ which counts the number of Laplacian eigenvalues greater than or
equal to $d_n$.

There is evidence that $\sigma(G)$ plays an important role in defining
structural properties of a graph $G$. For example, it is related to the clique
number $\omega$ of $G$ (the number of vertices of the largest induced  complete
subgraph of $G$) and it also gives insight about the Laplacian energy of a graph
\cite{Das16,Pir15}.  Moreover, several structural properties of a graph are
related to  $\sigma$ (see, for example \cite{Das15,Das16}).

In this paper, we are interested in the distribution of Laplacian eigenvalues
of trees. More precisely, we want to investigate $\sigma(T)$. If $I$ is a
real interval, we let $m_GI$ denote the number of Laplacian eigenvalues of
$G$ in $I$, counting multiplicities. Given a tree of order $n$, its {\em
average vertex degree} is $d_n=\avgdeg{n}$. In 2011 it was conjectured that
in any tree, at least half of the Laplacian eigenvalues were less than this
average \cite{Tre2011}. The purpose of this paper is to prove the conjecture,
which we state as the following result.
\begin{theorem}
\label{thr:main}
For any tree $T$ of order $n$,
$m_T [0, d_n ) \geq \left \lceil\frac{n}{2} \right \rceil.$
\end{theorem}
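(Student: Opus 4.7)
The plan is to apply the Jacobs--Trevisan diagonalization algorithm for Laplacians of trees. Given a tree $T$ rooted at an arbitrary vertex and a real number $\alpha$, this algorithm processes vertices from the leaves toward the root and assigns each vertex $v$ a scalar
\[
a(v) = \begin{cases} d(v) - \alpha, & \text{if } v \text{ is a leaf,} \\ d(v) - \alpha - \sum_{c \text{ child of } v} \dfrac{1}{a(c)}, & \text{otherwise,} \end{cases}
\]
with a standard modification whenever some $a(c) = 0$. By Sylvester's law of inertia, the number of negative (resp.\ zero, positive) values among $\{a(v)\}_{v \in V(T)}$ equals $m_T[0,\alpha)$ (resp.\ $m_T\{\alpha\}$, $m_T(\alpha, n]$). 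Setting $\alpha = d_n = 2 - 2/n$, the theorem reduces to showing that at least $\lceil n/2 \rceil$ of the values $a(v)$ are negative.

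A first observation is that every leaf $\ell$ contributes a negative, since $a(\ell) = 1 - d_n = (2-n)/n < 0$ for $n \geq 3$. To handle internal vertices, I would attempt to construct an injection $\phi$ from the vertices with $a(v) \geq 0$ into the vertices with $a(v) < 0$, which automatically yields $\#\{v : a(v) < 0\} \geq \lceil n/2 \rceil$. The natural candidate is built from a matching $M$ of $T$: pair each matched edge's positive endpoint with its negative endpoint, and argue that every vertex unsaturated by $M$ is itself negative. Granting such a matching-compatible sign pattern, the count yields $\#\{v : a(v) < 0\} \geq |M| + (n - 2|M|) = n - |M| \geq \lceil n/2 \rceil$, since $|M| \leq \lfloor n/2 \rfloor$.

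The main technical lemma to prove is thus: there exist a rooting of $T$ and a matching $M$ of $T$ for which the algorithm's sign pattern satisfies the compatibility condition above. I plan to proceed by induction on $|V(T)|$, using a strengthened hypothesis that tracks not just the count of negatives in each subtree $T_v$, but also the sign (and possibly a magnitude range) of $a(v)$ at its root. Natural reductions involve peeling off a pendant subtree, such as a single leaf or a small pendant path, analyzing how the root value $a(v)$ changes when the pendant is reattached, and then applying induction to the remainder. Testing the invariant against canonical families (paths, stars, spiders) should help refine it.

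The principal obstacle will be controlling sign flips in the recursion. A child $c$ with $a(c) < 0$ but $|a(c)|$ small contributes a large positive term $-1/a(c)$ to $a(v)$, which can flip $a(v)$ to positive and potentially cascade upward through several levels. The delicate part is showing that any such positive cascade is paid for by sufficient negatives elsewhere in the same subtree. Since $d_n < 2$, the leaf values have magnitude close to $1$, which should provide enough quantitative control on $|a(v)|$ to close the induction. The degenerate case $a(c) = 0$, handled by the standard modification of the Jacobs--Trevisan algorithm for trees, can be managed via a local perturbation argument and is not expected to present serious difficulty.
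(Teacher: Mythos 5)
Your proposal is a strategy outline, not a proof: the entire weight of the argument rests on the ``main technical lemma'' (existence of a rooting and a matching $M$ such that each matched edge carries at most one nonnegative value and every $M$-unsaturated vertex is negative), and you give no argument for it beyond naming the obstacle. That lemma is essentially equivalent to the theorem itself. The arithmetic surrounding it is fine --- leaves receive $1-d_n<0$, and the compatibility condition does give $\#\{v: a(v)<0\}\ge n-|M|\ge\lceil n/2\rceil$ --- but this is the easy part. A matching-based inertia argument of exactly this shape is how the known bound $m_T[0,2)\ge\lceil n/2\rceil$ is obtained, and the paper points out explicitly that this does \emph{not} suffice here because trees can have eigenvalues in $\bigl(2-\tfrac{2}{n},\,2\bigr)$; lowering the threshold from $2$ to $d_n$ is precisely where the difficulty of the conjecture is concentrated. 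Your remark that ``leaf values have magnitude close to $1$, which should provide enough quantitative control'' is where the whole proof would have to live, and nothing is supplied there. Note also that the bound is tight: for the paper's prototype trees $\sigma(T_\alpha)=\lfloor n/2\rfloor$ exactly, so your matching must be maximum, every matched edge must carry exactly one positive endpoint, and every unmatched vertex must be negative --- there is no slack for the induction to absorb a cascading sign flip, which is why a crude perturbation or magnitude estimate cannot close it.

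A secondary, fixable but unaddressed issue: your induction peels off pendant subtrees and recurses on smaller trees, but the diagonalization must be run at the fixed value $\alpha=2-\tfrac{2}{n}$ determined by the \emph{original} order $n$, not at the average degree of the smaller tree. So the inductive hypothesis cannot be the theorem for smaller trees; it must be a statement about the sign pattern produced at a foreign threshold, parametrized by $n$, and the base cases and invariants all change accordingly. For comparison, the paper avoids a direct induction entirely: it defines ``proper transformations'' (local surgeries, verified by explicit rational recursions for the diagonal values, that never decrease the number of eigenvalues above $d_n$), reduces an arbitrary tree to one of four prototype trees depending on $n\bmod 4$, and checks the bound there by direct computation. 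Your route, if the compatibility lemma could be established, would be shorter and more conceptual, but as written the central claim is asserted rather than proved.
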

Since there are $n$ Laplacian eigenvalues bounded by $n$, this is equivalent to
$$
\sigma(T)=m_T[d_n , n]  \leq ~\left \lfloor\frac{n}{2} \right \rfloor .
$$
This property does not hold for general graphs, and complete
graphs $K_n$ provide a counter-example.
It is known that rational Laplacian eigenvalues of graphs are integers, and
when $n \geq 3$, $d_n$ is not integer so it suffices to show $m_T(d_n , n]
\leq \lfloor\frac{n}{2} \rfloor$ for all trees $T$.

The proof of this conjecture is, perhaps surprisingly, difficult.
There have been a few attempts to solve it and we summarize now some partial results.
First, computation has verified that Theorem~\ref{thr:main} holds for all
trees of order $n \leq 20$ (see the experiments by J. Carvalho \cite{Carv}.)
In the paper \cite{Tre2011}, where the conjecture was set forward, it was
proved that diameter 3 trees and caterpillars satisfy the conjecture and in
\cite{Jacobs2017} the theorem was shown to hold for all trees of diameter four.
It is known \cite{Guo2007,Braga2013} that  $m_T [0,2) \geq  \lceil\frac{n}{2} \rceil$ for all trees $T$.
If there were never eigenvalues in
$\left(2-\frac{2}{n}, 2\right)$, then Theorem~\ref{thr:main} would easily follow from this result.
However, there {\em can} exist eigenvalues in this interval.

By \cite{Carv} we assume $n \geq 8$.
For each $n \geq 8$, our proof makes use of four
{\em prototype trees} which satisfy the theorem.
If any tree can be transformed to a prototype tree
in a way that does not
decrease the number of eigenvalues above the average degree,
then Theorem~\ref{thr:main} holds.

The paper's remainder is organized as follows. In the next section, we
present a notation that is crucial for our strategy to prove our result.
To simplify the representation of trees
we introduce a concatenation operator that is executed on
{\em suns} and {\em paths} to form
{\em generalized pendant paths.}
We also define a summation operator
and {\em starlike vertices} which are based on these operators.
The main tool to prove
Theorem~\ref{thr:main} is the algorithm \texttt{Diagonalize} that counts the
number of Laplacian eigenvalues of a tree in any interval \cite{Braga2013}.
In Section~\ref{sec:stra} this algorithm is described
along with
a procedure called \texttt{Transform}, a high-level proof strategy.
In Section~\ref{sec:prop},
we show how to use the \texttt{Diagonalize} algorithm to transform a tree
in a way
that does not decrease
the number of eigenvalues above average degree, which we call a {\em proper} transformation.
In  Section~\ref{sec:prot}, we define prototype trees,
which are close to stars and depend only on the congruence $n\equiv \alpha \pmod 4$.
We prove that these prototype trees satisfy the conjecture,
however they are extreme examples in which equality is achieved in Theorem~\ref{thr:main}.
In Section~\ref{sec:red} describe a procedure
\texttt{ReduceStarVertex}
that does much of the structural transformation.
This is based on the proper transformations and since it is not obvious that
\texttt{Transform} halts we also prove its correctness in
Section~\ref{sec:red}.
All trees get reduced to some small cases that are
eventually transformed into prototype trees in Section~\ref{sec:small}.
A complete example showing a tree properly transformed into a prototype tree
is given in Section~\ref{sec:exa}.

\section{Notation} \label{sec:not}
Consider a tree $T$ with $n$ vertices, whose average degree is
$d_n=2-\frac{2}{n}$.
Recall that $m_T[0, d_n)$ is the number
Laplacian eigenvalues of $T$ which are smaller than $d_n$,
and
$\sigma(T) =
m_T(d_n , n]$, denotes the number of eigenvalues which are larger than
$d_n$. We will use the fact that $m_T[0, d_n) + \sigma(T) = n$.

The concept of a {\em pendant path} is well known, but important here. Let
$u$ be a vertex of a tree $T$ with degree $\geq 3$. Suppose that $P_q = u\,
u_1 \ldots u_q~~ (q \geq 1)$ is a path in $T$ whose internal vertices
$u_1,\ldots, u_{q-1}$ all have degree 2 in $T$, and where $u_q$ is a leaf.
Then we say that $P_q$ is a pendant path of length $q$ attached at $u$.
The following is known \cite{MOHAR2007}.
\begin{lemma}
\label{lemma-starlike} Any tree that is not a path has at least one vertex
$u$, with $\deg(u) \geq 3$, having (at least) two pendant paths.
\end{lemma}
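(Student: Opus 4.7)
The plan is to reduce $T$ to its ``skeleton'' of branching vertices, namely the tree obtained by suppressing all degree-$2$ vertices, and to observe that a leaf of this skeleton is the vertex we are looking for.

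First I would let $B$ denote the set of vertices of $T$ whose degree is at least $3$. Since $T$ is not a path, $B$ is nonempty: a tree in which every vertex has degree at most $2$ must be a path. Handle the base case $|B|=1$ separately: if $B = \{u\}$, then every component of $T - u$ has maximum degree at most $2$ and is therefore a path, and there are $\deg_T(u) \geq 3$ such components. Re-attaching each component through its edge to $u$ produces a pendant path at $u$, so $u$ already has at least three pendant paths.

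In the main case $|B| \geq 2$, I would form the auxiliary graph $H$ on vertex set $B$ in which $u, w \in B$ are adjacent precisely when the unique $u$--$w$ path in $T$ has all internal vertices of degree $2$ in $T$ (including the case of an edge in $T$, where that set is empty). This $H$ is the topological contraction of $T$ to its branching vertices, and it inherits from $T$ the property of being a tree. Choose any leaf $u$ of $H$, with unique $H$-neighbor $w$. Exactly one neighbor of $u$ in $T$ lies on the path toward $w$; each of the remaining $\deg_T(u) - 1 \geq 2$ neighbors begins a path in $T$ that, by the leaf property of $u$ in $H$, cannot reach any other vertex of $B$, and so must proceed through degree-$2$ vertices until it terminates at a leaf of $T$. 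Those are the pendant paths of $u$, and there are at least two of them.

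The only mildly delicate point is verifying that the branching-to-branching paths in $T$ are well-defined and pairwise internally disjoint so that $H$ really is a tree, but this follows directly from $T$ being a tree. I do not anticipate a genuine obstacle; the content of the lemma is essentially the observation that a leaf of the contracted skeleton has all but one of its original neighbors leading into degree-$2$ tails.
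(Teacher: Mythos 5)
The paper offers no proof of this lemma at all: it is stated as a known fact and attributed to \cite{MOHAR2007}, so there is no in-paper argument to compare yours against. Your proof is correct and self-contained, and it is the standard argument one would expect. Two small points are left implicit and deserve a sentence each if you write this up. First, in both of your cases you need that the vertex at which a maximum-degree-$\le 2$ component of $T-u$ attaches to $u$ is an \emph{endpoint} of that component (otherwise that vertex would have degree $3$ in $T$, contradicting the choice of $B$ or the leaf property of $u$ in $H$); this is exactly what turns the re-attached component into a pendant path rather than a path joined at an interior vertex. Second, the claim that a non-toward-$w$ branch at a leaf $u$ of $H$ contains no vertex of $B$ should be justified by taking the closest such vertex $z$ in that branch and observing that $u$ and $z$ would then be adjacent in $H$, giving $u$ a second $H$-neighbor. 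Both gaps are one-line fixes, so the proposal stands as a complete elementary proof of a statement the paper merely cites.
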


Let $T$ be a tree with $n$ vertices, and let $u$ be a vertex of
degree at least $\ell$ of $T$ having $\ell \geq 1$ pendant paths attached at
$u$. We denote the \emph{sum} of pendant paths attached at $u$ by
$P(u)=P_{q_1}\oplus\cdots \oplus P_{q_\ell}$, as illustrated in Figure
\ref{fig:ex1}. The number of edges in each path is denoted by $\sharp
P_{q}=q$.
\begin{figure}[!ht]
  \centering
\definecolor{rvwvcq}{rgb}{0.08235294117647059,0.396078431372549,0.7529411764705882}
\begin{tikzpicture}[line cap=round,line join=round,>=triangle 45,x=1cm,y=1cm, scale=0.6, every node/.style={scale=0.7}]
\clip(-4,-4) rectangle (3,1);
\draw [line width=1pt,dash pattern=on 1pt off 1pt] (2,0.47)-- (1.34,-0.27);
\draw [line width=1pt] (1.34,-0.27)-- (0.86,-0.81);
\draw [line width=1pt] (0.86,-0.81)-- (0.08,-0.99);
\draw [line width=1pt] (0.08,-0.99)-- (-0.58,-1.17);
\draw [line width=1pt] (0.86,-0.81)-- (0.54,-1.27);
\draw [line width=1pt] (0.54,-1.27)-- (0.18,-1.75);
\draw [line width=1pt] (0.18,-1.75)-- (-0.2,-2.23);
\draw [line width=1pt] (-0.2,-2.23)-- (-0.56,-2.71);
\draw [line width=1pt] (0.86,-0.81)-- (1.22,-1.47);
\draw [line width=1pt] (0.86,-0.81)-- (0.7,-1.69);
\draw [line width=1pt] (0.7,-1.69)-- (0.58,-2.29);
\draw [line width=1pt] (0.58,-2.29)-- (0.48,-2.79);
\draw [line width=1pt] (0.48,-2.79)-- (0.38,-3.37);
\draw [line width=1pt] (0.86,-0.81)-- (0.22,-0.39);
\draw [line width=1pt] (0.22,-0.39)-- (-0.38,-0.01);
\draw (0.42,0.19) node[anchor=north west] {\tiny{$u$}};
\draw (-1.6,0.49) node[anchor=north west] {\tiny{$P_2$}};
\draw (-1.7,-0.85) node[anchor=north west] {\tiny{$P_2$}};
\draw (0.6,-2.83) node[anchor=north west] {\tiny{$P_4$}};
\draw (-1.8,-2.25) node[anchor=north west] {\tiny{$P_5$}};
\draw (1.3,-1.47) node[anchor=north west] {\tiny{$P_1$}};
\draw [line width=1pt] (-0.56,-2.71)-- (-0.9,-3.21);
\begin{scriptsize}
\draw [fill=black] (1.34,-0.27) circle (2.5pt);
\draw [fill=black] (0.86,-0.81) circle (2.5pt);
\draw [fill=black] (0.08,-0.99) circle (2.5pt);
\draw [fill=black] (-0.58,-1.17) circle (2.5pt);
\draw [fill=black] (0.54,-1.27) circle (2.5pt);
\draw [fill=black] (0.18,-1.75) circle (2.5pt);
\draw [fill=black] (-0.2,-2.23) circle (2.5pt);
\draw [fill=black] (-0.56,-2.71) circle (2.5pt);
\draw [fill=black] (1.22,-1.47) circle (2.5pt);
\draw [fill=black] (0.7,-1.69) circle (2.5pt);
\draw [fill=black] (0.58,-2.29) circle (2.5pt);
\draw [fill=black] (0.48,-2.79) circle (2.5pt);
\draw [fill=black] (0.38,-3.37) circle (2.5pt);
\draw [fill=black] (0.22,-0.39) circle (2.5pt);
\draw [fill=black] (-0.38,-0.01) circle (2.5pt);
\draw [fill=black] (-0.9,-3.21) circle (2.5pt);
\end{scriptsize}
\end{tikzpicture}
\caption{Vertex $u$ with $P(u)=P_{1}\oplus 2P_{2}\oplus P_{4}\oplus P_{5}$}\label{fig:ex1}
\end{figure}
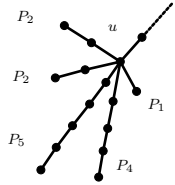
Pendant paths of length 2 are key to our strategy. Essentially, we transform
any tree $T$ to a tree $T^\prime$ having  only $P_2$'s as pendant paths.
A subgraph obtained by a vertex $u$ attached to $r\geq 1$ paths of length 2,
is called a {\em sun with $r$ rays} and denoted by $S_r$.
Hence, if a vertex $u$
of degree at least $r$
has $r\geq 1$ pendant
paths of length 2, say $P(u) = r P_2$, we will write
$P(u)=S_r$.

Consider a path between $u$ and $v$, where
$v$ has $r$ pendant $P_2$ paths,
that is $P(v) = S_r$.
To simplify the representation,
we use the concatenation symbol and write
$P_q \ast S_r$.

Given a tree $T$, we recall that a \emph{branch at a vertex $u$} is a maximal
subtree where $u$ is a leaf.
Let $v$ be a vertex where $P(v) = S_r$.
If a vertex $u$ of degree $\geq 3$ is connected
to $v$ by a path of length
$q \geq 0$, we observe that
$P(u)=P_q\ast S_r$ is a branch at $u$, which we call a \emph{generalized
pendant path at $u$},
abbreviated by {\em \gpp}.

To further simplify the graphic part of the representation, we will use a black
square {\tiny{$\blacksquare$}} to represent a pendant sun $S_r$ attached to a vertex,
and a single edge to represent the entire path $P_q$, omitting the $r$
pendant $P_2$'s and the $q$ vertices.
We will refer to this as the
$(P_q,S_r)$ {\em representation} of this generalized pendant path $P_q \ast S_r$,
as shown in Figure~\ref{fig:ex2}.
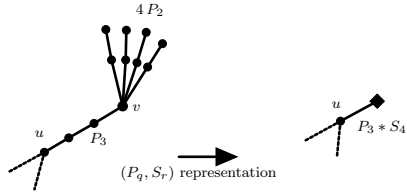
\begin{figure}[!ht]
  \centering
\begin{tikzpicture}[line cap=round,line join=round,>=triangle 45,x=1cm,y=1cm,scale=0.6, every node/.style={scale=0.7}]
\clip(-8.86,-0.64) rectangle (2.58,4);
\draw [line width=1pt,dash pattern=on 1pt off 1pt] (-6.12,0.44)-- (-6.92,0);
\draw [line width=1pt] (-4.38526,1.46298)-- (-3.8329483896370125,2.3307297959890714);
\draw [line width=1pt] (-3.8329483896370125,2.3307297959890714)-- (-3.5034939745135114,2.8484438768974316);
\draw [line width=1pt] (-4.38526,1.46298)-- (-4.068272971868086,2.4248596288815003);
\draw [line width=1pt] (-4.068272971868086,2.4248596288815003)-- (-3.8643250006011565,3.0994567646105757);
\draw [line width=1pt] (-4.38526,1.46298)-- (-4.31928585958123,2.4876128508097866);
\draw [line width=1pt] (-4.31928585958123,2.4876128508097866)-- (-4.3035975540991585,3.1465216810567904);
\draw [line width=1pt] (-4.38526,1.46298)-- (-4.63305196922266,2.4876128508097866);
\draw [line width=1pt] (-4.63305196922266,2.4876128508097866)-- (-4.758558413079232,3.162209986538862);
\draw (-5.28,1.04) node[anchor=north west] {{\tiny $P_3$}};
\draw [line width=1pt,dash pattern=on 1pt off 1pt] (0.44,1.13)-- (-0.36,0.69);
\draw [line width=1pt] (0.44,1.13)-- (1.25474,1.57298);
\draw (0.64,1.30) node[anchor=north west] {{\tiny $P_3*S_4$}};
\draw [->,line width=1pt] (-3.14,0.34) -- (-1.82,0.34);
\draw (-4.26,3.9) node[anchor=north west] {{\tiny $4\, P_2$}};
\draw (-4.34,1.66) node[anchor=north west] {{\tiny $v$}};
\draw (-4.6,0.3) node[anchor=north west] {{\tiny $(P_q,S_r) \text{ representation}$}};
\draw (0.08,1.74) node[anchor=north west] {{\tiny $u$}};
\draw [line width=1pt] (-6.12,0.44)-- (-5.58,0.76);
\draw [line width=1pt] (-5.58,0.76)-- (-5.02,1.08);
\draw [line width=1pt] (-5.02,1.08)-- (-4.38526,1.46298);
\draw (-6.5,1.08) node[anchor=north west] {{\tiny $u$}};
\draw [line width=1pt,dash pattern=on 1pt off 1pt] (-6.12,0.44)-- (-6.34,-0.38);
\draw [line width=1pt,dash pattern=on 1pt off 1pt] (0.44,1.13)-- (0.36,0.32);
\begin{scriptsize}
\draw [fill=black] (-6.12,0.44) circle (2.5pt);
\draw [fill=black] (-4.38526,1.46298) circle (3pt);
\draw [fill=black] (-3.8329483896370125,2.3307297959890714) circle (2.5pt);
\draw [fill=black] (-3.5034939745135114,2.8484438768974316) circle (2.5pt);
\draw [fill=black] (-4.068272971868086,2.4248596288815003) circle (2.5pt);
\draw [fill=black] (-3.8643250006011565,3.0994567646105757) circle (2.5pt);
\draw [fill=black] (-4.31928585958123,2.4876128508097866) circle (2.5pt);
\draw [fill=black] (-4.3035975540991585,3.1465216810567904) circle (2.5pt);
\draw [fill=black] (-4.63305196922266,2.4876128508097866) circle (2.5pt);
\draw [fill=black] (-4.758558413079232,3.162209986538862) circle (2.5pt);
\draw [fill=black] (0.44,1.13) circle (2.5pt);
\draw [fill=black] (1.25474,1.57298) ++(-4.5pt,0 pt) -- ++(4.5pt,4.5pt)--++(4.5pt,-4.5pt)--++(-4.5pt,-4.5pt)--++(-4.5pt,4.5pt);
\draw [fill=black] (-5.58,0.76) circle (2.5pt);
\draw [fill=black] (-5.02,1.08) circle (2.5pt);
\end{scriptsize}
\end{tikzpicture} 
\caption{$(P_q, S_r)$ representation of a generalized pendant path}\label{fig:ex2}
\end{figure}

We can consider $q=0$ for paths
$P_q$ of length 0, as well as $r=0$ for no pendant $S_r$.
However, we do {\em not}
allow both $r=q=0$ simultaneously. As we will see in Algorithm
\texttt{InitiateRepresentation}, it is always possible to use the $(P_q,
S_r)$ representation to represent every pendant path in a given tree. We call
this a $(P_q, S_r)$ {\em representation of $T$}.
We adopt the following convention for representing paths.
\begin{eqnarray*}
   P_1 & = & P_1 \ast S_0 \\
   P_2 & = & P_0 \ast S_1 \\
   P_q & = & P_{q-2} \ast S_1  \text{ for } q \geq 3.
\end{eqnarray*}

This convention provides a particular $(P_q, S_r)$ representation
of paths in a given tree
in which $r$ is always equal to 0 or 1.
We observe that there are many other feasible choices,
including the one where, if a vertex $u$ has $r \geq 2$ pendant $P_2$
attached and no other pendant paths, we write $P(u)=r\, P_2=P_0 \ast S_r$.

Consider the fairly large
tree $\bar{T}$ in Figure~\ref{fig:ex3:1}
that will be used throughout the paper.
Using the special symbol to represent pendant $S_r$, we obtain, in
Figure~\ref{fig:an_example:0001} (left), its $(P_q, S_r)$ representation.
{\em All trees are considered to be in $(P_q, S_r)$ representation.}

\begin{figure}[!ht]
  \centering
  \begin{tikzpicture}[line cap=round,line join=round,>=triangle 45,x=1cm,y=1cm, scale=0.6]
\clip(-10.65,-3.2) rectangle (10.65,3.2);
\draw [line width=1pt] (0.57,-0.58)-- (1.1342730908889087,-1.1324377442818057);
\draw [line width=1pt] (-2,-1)-- (-2.8348681960751843,-1.838411490975024);
\draw [line width=1pt] (-2.8348681960751843,-1.838411490975024)-- (-3.62,-2.6);
\draw [line width=1pt] (-3.62,-2.6)-- (-4.2,-2.36);
\draw [line width=1pt] (-4.2,-2.36)-- (-4.64,-2.2);
\draw [line width=1pt] (0,0)-- (0,1);
\draw [line width=1pt] (0,1)-- (0.31,1.76);
\draw [line width=1pt] (0,1)-- (0.68,1);
\draw [line width=1pt] (0.68,1)-- (1.32,0.98);
\draw [line width=1pt] (1.32,0.98)-- (1.84,0.98);
\draw [line width=1pt] (1.84,0.98)-- (2.185389558187702,0.5462109422998472);
\draw [line width=1pt] (2.185389558187702,0.5462109422998472)-- (2.483467362347061,0.15400330524805916);
\draw [line width=1pt] (1.84,0.98)-- (2.6246621116857045,0.750158913566777);
\draw [line width=1pt] (2.6246621116857045,0.750158913566777)-- (3.063934665183707,0.6246524697102048);
\draw [line width=1pt] (1.84,0.98)-- (2.35474,1.26298);
\draw [line width=1pt] (1.1342730908889087,-1.1324377442818057)-- (1.5735456443869111,-1.5403336868156652);
\draw [line width=1pt] (-2,-1)-- (-2.447367050668015,-0.8139651429957524);
\draw [line width=1pt] (-5.2226282904464725,0.36579542925602465)-- (-5.661900843944475,0.5383667895588113);
\draw [line width=1pt] (-5.661900843944475,0.5383667895588113)-- (-6.0227318700321195,0.6952498443795265);
\draw [line width=1pt] (2.35474,1.26298)-- (2.90705161036299,2.1307297959890708);
\draw [line width=1pt] (2.90705161036299,2.1307297959890708)-- (3.236506025486492,2.648443876897431);
\draw [line width=1pt] (2.35474,1.26298)-- (2.6717270281319174,2.2248596288815);
\draw [line width=1pt] (2.6717270281319174,2.2248596288815)-- (2.8756749993988473,2.8994567646105756);
\draw [line width=1pt] (2.35474,1.26298)-- (2.4207141404187733,2.287612850809786);
\draw [line width=1pt] (2.4207141404187733,2.287612850809786)-- (2.4364024459008444,2.94652168105679);
\draw [line width=1pt] (2.35474,1.26298)-- (2.106948030777343,2.287612850809786);
\draw [line width=1pt] (2.106948030777343,2.287612850809786)-- (1.9814415869207707,2.9622099865388614);
\draw [line width=1pt] (-3.62,-2.6)-- (-3.96,-2.12);
\draw [line width=1pt] (-3.96,-2.12)-- (-4.32,-1.7);
\draw [line width=1pt] (-3.62,-2.6)-- (-3.697724997589117,-2.0109828512778107);
\draw [line width=1pt] (-3.697724997589117,-2.0109828512778107)-- (-3.8075431359636176,-1.446203853923236);
\draw [line width=1pt] (2.483467362347061,0.15400330524805916)-- (2.7501685555422766,-0.1911394153575143);
\draw [line width=1pt] (2.185389558187702,0.5462109422998472)-- (2.05988311433113,0.05987347235563002);
\draw [line width=1pt] (2.05988311433113,0.05987347235563002)-- (1.9657532814387009,-0.3166458592140865);
\draw [line width=1pt] (0.57,-0.58)-- (0,0);
\draw [line width=1pt] (-3.344738124242506,-0.43430815032962167)-- (-3.7761665249994727,-0.261736790026835);
\draw [line width=1pt] (-3.7761665249994727,-0.261736790026835)-- (-4.242109197816997,-0.054651157663490914);
\draw [line width=1pt] (-4.242109197816997,-0.054651157663490914)-- (-4.690794734604243,0.13517733866957446);
\draw [line width=1pt] (-4.690794734604243,0.13517733866957446)-- (-5.2226282904464725,0.36579542925602465);
\draw [line width=1pt] (-3.344738124242506,-0.43430815032962167)-- (-2.913309723485539,-0.6241366466626871);
\draw [line width=1pt] (-2.913309723485539,-0.6241366466626871)-- (-2.447367050668015,-0.8139651429957524);
\draw [line width=1pt] (-2,-1)-- (-0.96,-0.48);
\draw [line width=1pt] (-0.96,-0.48)-- (0,0);
\draw [line width=1pt] (-0.96,-0.48)-- (-0.9,-1.4);
\draw [line width=1pt] (-0.9,-1.4)-- (-0.88,-2);
\draw [line width=1pt] (-0.88,-2)-- (-0.44,-2.56);
\draw [line width=1pt] (-0.9,-1.4)-- (-1.44,-1.8);
\draw [line width=1pt] (-0.88,-2)-- (-0.98,-2.66);
\draw [line width=1pt] (-0.96,-0.48)-- (-1.34,0.32);
\draw [line width=1pt] (-1.34,0.32)-- (-1.34,1.06);
\draw [line width=1pt] (-1.34,0.32)-- (-1.96,0.44);
\draw [line width=1pt] (-1.96,0.44)-- (-2.56,0.54);
\begin{scriptsize}
\draw [fill=black] (0,0) circle (2.5pt);
\draw [fill=black] (0.57,-0.58) circle (2.5pt);
\draw [fill=black] (1.1342730908889087,-1.1324377442818057) circle (2.5pt);
\draw [fill=black] (-2,-1) circle (2.5pt);
\draw [fill=black] (-2.8348681960751843,-1.838411490975024) circle (2.5pt);
\draw [fill=black] (-3.62,-2.6) circle (2.5pt);
\draw [fill=black] (-4.2,-2.36) circle (2.5pt);
\draw [fill=black] (-4.64,-2.2) circle (2.5pt);
\draw [fill=black] (0,1) circle (2.5pt);
\draw [fill=black] (0.31,1.76) circle (2.5pt);
\draw [fill=black] (0.68,1) circle (2.5pt);
\draw [fill=black] (1.32,0.98) circle (2.5pt);
\draw [fill=black] (1.84,0.98) circle (2.5pt);
\draw [fill=black] (2.185389558187702,0.5462109422998472) circle (2.5pt);
\draw [fill=black] (2.483467362347061,0.15400330524805916) circle (2.5pt);
\draw [fill=black] (2.6246621116857045,0.750158913566777) circle (2.5pt);
\draw [fill=black] (3.063934665183707,0.6246524697102048) circle (2.5pt);
\draw [fill=black] (2.35474,1.26298) circle (2.5pt);
\draw [fill=black] (1.5735456443869111,-1.5403336868156652) circle (2.5pt);
\draw [fill=black] (-2.447367050668015,-0.8139651429957524) circle (2.5pt);
\draw [fill=black] (-5.2226282904464725,0.36579542925602465) circle (2.5pt);
\draw [fill=black] (-5.661900843944475,0.5383667895588113) circle (2.5pt);
\draw [fill=black] (-6.0227318700321195,0.6952498443795265) circle (2.5pt);
\draw [fill=black] (2.90705161036299,2.1307297959890708) circle (2.5pt);
\draw [fill=black] (3.236506025486492,2.648443876897431) circle (2.5pt);
\draw [fill=black] (2.6717270281319174,2.2248596288815) circle (2.5pt);
\draw [fill=black] (2.8756749993988473,2.8994567646105756) circle (2.5pt);
\draw [fill=black] (2.4207141404187733,2.287612850809786) circle (2.5pt);
\draw [fill=black] (2.4364024459008444,2.94652168105679) circle (2.5pt);
\draw [fill=black] (2.106948030777343,2.287612850809786) circle (2.5pt);
\draw [fill=black] (1.9814415869207707,2.9622099865388614) circle (2.5pt);
\draw [fill=black] (-3.96,-2.12) circle (2.5pt);
\draw [fill=black] (-4.32,-1.7) circle (2.5pt);
\draw [fill=black] (-3.697724997589117,-2.0109828512778107) circle (2.5pt);
\draw [fill=black] (-3.8075431359636176,-1.446203853923236) circle (2.5pt);
\draw [fill=black] (2.7501685555422766,-0.1911394153575143) circle (2.5pt);
\draw [fill=black] (2.05988311433113,0.05987347235563002) circle (2.5pt);
\draw [fill=black] (1.9657532814387009,-0.3166458592140865) circle (2.5pt);
\draw [fill=black] (-3.344738124242506,-0.43430815032962167) circle (2.5pt);
\draw [fill=black] (-3.7761665249994727,-0.261736790026835) circle (2.5pt);
\draw [fill=black] (-4.242109197816997,-0.054651157663490914) circle (2.5pt);
\draw [fill=black] (-4.690794734604243,0.13517733866957446) circle (2.5pt);
\draw [fill=black] (-2.913309723485539,-0.6241366466626871) circle (2.5pt);
\draw [fill=black] (-0.96,-0.48) circle (2.5pt);
\draw [fill=black] (-0.9,-1.4) circle (2.5pt);
\draw [fill=black] (-0.88,-2) circle (2.5pt);
\draw [fill=black] (-0.44,-2.56) circle (2.5pt);
\draw [fill=black] (-1.44,-1.8) circle (2.5pt);
\draw [fill=black] (-0.98,-2.66) circle (2.5pt);
\draw [fill=black] (-1.34,0.32) circle (2.5pt);
\draw [fill=black] (-1.34,1.06) circle (2.5pt);
\draw [fill=black] (-1.96,0.44) circle (2.5pt);
\draw [fill=black] (-2.56,0.54) circle (2.5pt);
\end{scriptsize}
\end{tikzpicture} 
\caption{Tree before the $(P_q, S_r)$ representation}\label{fig:ex3:1}
\end{figure}
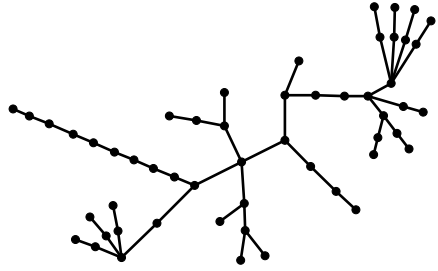

We say a vertex $u$ is a \emph{starlike vertex} if it has degree $\geq 3$ and
has at least two generalized pendant paths attached to it.
This definition depends on the particular $(P_q, S_r)$
representation, as well as the graph itself. For example,
even though they represent the same subgraph,
a vertex $u$ with $P(u)=3 (P_0 * S_1)$ has three
generalized pendant paths (hence a starlike vertex),
whereas  $P(u)= P_0 *S_3$ has a single generalized pendant path and therefore
is not starlike. Let $T$ be a tree and let $u$ be a starlike
vertex of $T$ having $\ell \geq 2$ generalized pendant paths attached at $u$.
According to our $(P_q,S_r)$ representation, we have  $P(u)=P_{q_1} \ast
S_{r_1}\oplus\cdots \oplus P_{q_\ell} \ast S_{r_\ell}$. Let us call the
\emph{weight} of $u$ and denote by $w(u)$, the sum of the number of vertices
of the generalized pendant paths, that is
$$w(u) =\sum_{i=1}^{l} \sharp (P_{q_i}\ast S_{r_i})
=\sum_{i=1}^{l} (q_i+2r_i).$$
For example
in ascending order of weight,
the vertices $u_1, u_2, u_3, u_4$
and $u_5$  in the tree of Figure \ref{fig:an_example:0001}
are all its starlike vertices
and have weights respectively, 2,3,4, 6 and 8.

\begin{center}
\begin{figure}[t]
  \centering
\input{initiate}
\caption{Procedure \texttt{InitiateRepresentation}.}\label{init-rep}
\end{figure}
\end{center}
\section{Strategy and algorithmic tools}\label{sec:stra}
A \emph{proper transformation} is defined  as an operation on a tree $T$
which gives a new tree $T^\prime$, with the same number of vertices, that
does not decrease the number of eigenvalues above the average degree, that is
$$\sigma(T) \leq \sigma(T^\prime).$$
A proper transformation requires $n \geq \minorder$, however as explained
earlier, trees of smaller size have been checked by computation.

Our strategy to prove that, for any tree $T$ of order $n$, $\sigma(T) \leq
\lfloor\frac{n}{2}\rfloor$ is to make successive transformations on $T$ to
obtain a prototype tree $T^\prime$. From the fact that we use proper
transformations, and that $\sigma(T^\prime) \leq \lfloor\frac{n}{2}\rfloor$,
it follows that $\sigma(T) \leq \sigma(T^\prime)$, proving
Theorem~\ref{thr:main}.
More precisely, for any tree with a given order $n$, we properly reduce it to
a prototype tree that depends only on the congruence $n \equiv \alpha
\pmod{4}$. We refer to Section \ref{sec:prot} for the definition of the prototype
trees and for the proof they satisfy Theorem~\ref{thr:main}.
\begin{center}
\begin{figure}[b]
  \centering
\input{transform}
\caption{Transforming $T$.}\label{treealgo}
\end{figure}
\end{center}
We describe a high level \texttt{algorithm Transform} to do the
transformation, shown in Figure~\ref{treealgo}. The initialization procedure
\texttt{InitiateRepresentation$(T)$} puts the tree $T$ into a $(P_q, S_r)$
representation, as illustrated in Figure~\ref{fig:an_example:0001}.
It may be formally described by the pseudo code of Figure~\ref{init-rep}.

The next step is the identification and ordering of all $k$ starlike vertices
of $T$. Recall that the weight of a starlike vertex $u$ is the total number
of vertices hanging at $u$. The main parameters of our transformation
algorithm is the number $k$ of starlike vertices and their weights.

The heart of our algorithm is the procedure \texttt{ReduceStarVertex}
$(T,u_1)$. It takes the tree $T$ and its starlike vertex of minimum weight
$u_1$ as arguments, and properly transforms the generalized pendant paths at
$u_1$ into a single generalized pendant path. More precisely,
$P(u)=P_{q_1} \ast S_{r_1}\oplus\cdots \oplus P_{q_{\ell}} \ast S_{r_\ell}$
is replaced by
$P_q\ast S_r$, for certain values of $q$ and $r$. We will prove in Section \ref{sec:red}, Lemma \ref{lem:nu-star-vertices} that if  the number of starlike vertices is greater than one,  then there is always a starlike vertex $u$ with $w(u) \leq 2\lfloor \frac{n}{4} \rfloor$.

Moreover, we will see that this does not increase the number of starlike
vertices, but increases the minimum weight.
This guarantees the algorithm stops.

While the $\ps$ representation of a tree changes during the transformation,
the following invariants are preserved by \texttt{ReduceStarVertex}. Leaves
and only leaves are square representing $S_r$, and pendant paths appear
to have length one, representing $P_q$, $q \geq 0$, and where $r + q \geq 1$.
That is, {\gpps} are on the tree's extremities.

For trees with a small number of starlike vertices, we have a different
strategy. In fact if a tree $T$ of order $n \geq 8$ has $k=0$ or $1$ starlike
vertices, we prove in Section~\ref{sec:small}, that $T$ can be properly transformed into $T_\alpha$, where $n \equiv \alpha \pmod 4$.
These results, along with the main procedure
\texttt{ReduceStarVertex} $(T,u_1)$, are going to be described later (in
Sections~\ref{sec:red}~and~\ref{sec:small}), as they need some technical analysis.

We finally review our main tool to prove the conjecture. It is the algorithm
reproduced in Figure \ref{alg-lap}. For any tree $T$ with $n$ vertices, it
produces a diagonal matrix $D$ that is congruent to the matrix $L + x I_n$,
where $L$ is the Laplacian matrix of $T$.

This algorithm, presented first in \cite{FHRT11}, is the Laplacian matrix
version of the adjacency matrix algorithm \cite{JT2011} that has been useful
in many applications of spectral graph theory (see, for example, the recent
ordering by the spectral radius \cite{OST2018,BELARDO2019} of certain trees).
It is shown (see \cite{JT2011}) that

\begin{lemma}
\label{lem:negzeropos} The number of eigenvalues of $T$ less (greater) than
$x$ is exactly the number of negative (positive) diagonal elements produced
by Diagonalize$( T,-x$ ).
\end{lemma}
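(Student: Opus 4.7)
The plan is to deduce the statement directly from Sylvester's law of inertia together with the congruence property of the output of \texttt{Diagonalize} already quoted in the excerpt.

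First I would fix the shift: invoking \texttt{Diagonalize}$(T,-x)$ produces, by the property recalled immediately before the lemma, a diagonal matrix $D$ that is congruent to $L+(-x)I_n = L - xI_n$. Since $L$ is real symmetric and the algorithm is congruence-preserving, $L - xI_n$ and $D$ are congruent real symmetric matrices.

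Next I would invoke Sylvester's law of inertia: the numbers of positive, negative, and zero eigenvalues of a real symmetric matrix are invariant under congruence. Applied to $D$ (whose eigenvalues are precisely its diagonal entries), this gives that the number of negative diagonal entries of $D$ equals the number of negative eigenvalues of $L - xI_n$, and similarly for positive entries. Finally, the spectrum of $L - xI_n$ is $\{\lambda_i - x : \lambda_i \in \mathrm{Spec}(L)\}$, so $\lambda_i - x < 0$ iff $\lambda_i < x$, and $\lambda_i - x > 0$ iff $\lambda_i > x$. Combining, the number of negative (respectively positive) diagonal entries of the output equals the number of Laplacian eigenvalues of $T$ strictly less than (respectively strictly greater than) $x$, which is exactly the claim.

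The only non-routine point is justifying that the matrix $D$ returned by \texttt{Diagonalize} is genuinely congruent to $L + xI_n$; but this is stated as a property of the algorithm in the excerpt (and proved in the cited reference \cite{FHRT11}), so no further work is needed here. Thus the entire proof amounts to the two-line observation that congruence preserves inertia and that shifting $L$ by $-xI_n$ shifts the spectrum by $-x$; I do not anticipate any real obstacle.
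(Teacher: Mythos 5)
Your argument is correct and is precisely the route the paper relies on: the paper only cites \cite{JT2011} for this lemma, having just stated that \texttt{Diagonalize} outputs a diagonal matrix congruent to $L+xI_n$, so the intended proof is exactly your combination of that congruence with Sylvester's law of inertia and the spectral shift $\lambda_i \mapsto \lambda_i - x$. No gap.
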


\begin{center}
\begin{figure}
  \centering
\input{diagonalize}
\caption{\label{alg-lap} $L + x I_n$ diagonalization}
\end{figure}
\end{center}

\begin{example}
We illustrate here how the algorithm may be executed on the tree itself.
Considering the star with $n >2$ vertices $K_{1,n-1}$ with $n-1$ leaves and
the center vertex of degree $n-1$.
The tree can be rooted anywhere, but
we choose the center vertex as the root.
A vertex $v$ is initialized with $\deg(v) + x$.
When $x= -2 + \frac{2}{n}$,
the initial values at the leaves are $-1 + \frac{2}{n} <0$  and
the initial value at the root is $n-3 + \frac{2}{n}$. The values at the leaves are
kept, while the value at the root changes to
$$
{\displaystyle n-3 + \frac{2}{n}-\frac{n-1}{-1+\frac{2}{n}}>0}.
$$
By Lemma \ref{lem:negzeropos}, it
implies there are $n-1$ Laplacian eigenvalues smaller than the average degree
and a single eigenvalue above it.
\end{example}

The technique we use to prove that
for any tree $T$ with $n$ vertices,
the number of Laplacian eigenvalues
greater than $2-\frac{2}{n}$ is at most
$\lfloor\frac{n}{2} \rfloor$, is the analysis of the signs on $T$ after
applying \texttt{Algorithm Diagonalize}$(T,-2+\frac{2}{n})$.

\section{Proper transformations}\label{sec:prop}

In this section we present a few local transformations which are performed on
a tree $T$ that preserve the number of vertices and do not decrease the
number of eigenvalues above the average degree.
Such proper transformations are local and for this reason we can translate this
property in terms of elementary rational recursions.

We start by analyzing the signs of the vertices after applying
\texttt{Diagonalize$\left(T,-2+\frac{2}{n}\right)$} on a tree having $r$ pendant $P_2$'s
attached to a path as in
Figure \ref{newseq_r}.
We assume $n \geq 8$ is the
number of vertices in $T$ and $0 \leq r \leq \lfloor\frac{n}{2}\rfloor$ is
the number of pendant paths $P_{2}$. In fact, using our notation established
above, this is a generalized pendant path $P_q \ast S_r$.

We use a white dots to indicate vertices where
\texttt{Diagonalize$\left(T,-2+\frac{2}{n}\right)$}
produces a negative value, black dots
indicate a positive value and light
gray where we do not know the precise sign.

\begin{figure}[!ht]
  \centering
\definecolor{ffffff}{rgb}{1,1,1}
\begin{tikzpicture}[line cap=round,line join=round,>=triangle 45,x=1cm,y=1cm, scale=0.6, every node/.style={scale=0.8}]
\clip(-12.2,-2.0) rectangle (12.2,3.4);
\draw [line width=1pt] (-2.02,0.97)-- (-3,0);
\draw [line width=1pt] (-3,0)-- (-4,-1);
\draw [line width=1pt] (-2.02,0.97)-- (-3.3387230793767304,0.5850247785033855);
\draw [line width=1pt] (-3.3387230793767304,0.5850247785033855)-- (-4.709949526960204,0.24783794713040147);
\draw [line width=1pt] (-2.02,0.97)-- (-3.3612022014682625,1.3268358075239504);
\draw [line width=1pt] (-3.3612022014682625,1.3268358075239504)-- (-4.799866015326334,1.6415435168054022);
\draw [line width=1pt] (-2.02,0.97)-- (-3,2);
\draw [line width=1pt] (-3,2)-- (-4,3);
\draw (-5.92,1.23) node[anchor=north west] {{\tiny}$S_{r}$};
\draw [line width=1pt] (-0.011813009829940168,0.9896489761509664)-- (-2.02,0.97);
\draw (-4.86,0.27) node[anchor=north west] {{\tiny $x_1$}};
\draw (-2.8,0.21) node[anchor=north west] {{\tiny $x_2$}};
\draw (-5.1,1.55) node[anchor=north west] {{\tiny $x_1$}};
\draw (-4.56,3.05) node[anchor=north west] {{\tiny $x_1$}};
\draw (-3.86,-0.85) node[anchor=north west] {{\tiny $x_1$}};
\draw (-3.84,0.55) node[anchor=north west] {{\tiny $x_2$}};
\draw (-4.04,1.39) node[anchor=north west] {{\tiny $x_2$}};
\draw (-3.72,2.27) node[anchor=north west] {{\tiny $x_2$}};
\draw (-2,1.85) node[anchor=north west] {{\tiny $b_1(r)$}};
\draw (-0.48,0.87) node[anchor=north west] {{\tiny $b_2(r)$}};
\draw [line width=1pt] (-0.011813009829940168,0.9896489761509664)-- (1.966349734224908,0.9896489761509664);
\draw (1.02,1.9) node[anchor=north west] {{\tiny}$P_q$};
\begin{scriptsize}
\draw [fill=ffffff] (-2.02,0.97) circle (2.5pt);
\draw [fill=black] (-3,0) circle (2.5pt);
\draw [fill=ffffff] (-4,-1) circle (2.5pt);
\draw [fill=black] (-3.3387230793767304,0.5850247785033855) circle (2.5pt);
\draw [fill=ffffff] (-4.709949526960204,0.24783794713040147) circle (2.5pt);
\draw [fill=black] (-3.3612022014682625,1.3268358075239504) circle (2.5pt);
\draw [fill=ffffff] (-4.799866015326334,1.6415435168054022) circle (2.5pt);
\draw [fill=black] (-3,2) circle (2.5pt);
\draw [fill=ffffff] (-4,3) circle (2.5pt);
\draw [fill=black] (-0.011813009829940168,0.9896489761509664) circle (2.5pt);
\end{scriptsize}
\end{tikzpicture}
  \caption{A tree end with a generalized pendant path $P_q\ast S_r$.} \label{newseq_r}
\end{figure}
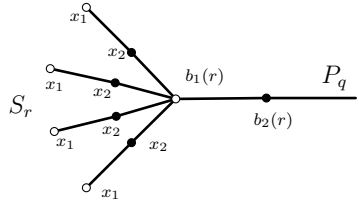

Applying the algorithm to the Laplacian matrix  to locate
$x = d_n$  we
obtain, in each extremal vertex of the pendant path $P_{2}$ the value
$$x_{1}=1-d_n=-1+\frac{2}{n}<0$$
 and the next value is
$$x_{2}=2-d_n -\frac{1}{x_{1}}=\frac{2}{n}-\frac{1}{x_{1}}>1.$$
For completeness, we may consider the recurrence relation
\begin{equation}\label{dependparamx}
   \begin{cases}
     x_{1}=-1+\frac{2}{n} \\
     x_{j+1}= \frac{2}{n} -\frac{1}{x_{j}}
   \end{cases}
\end{equation}
From these values we proceed processing the vertices of the path
$P_q$ in Figure \ref{newseq_r}.  obtaining
$$b_{1}=r+1-d_n-\frac{r}{x_{2}}= x_{1} + r\left(1- \frac{1}{x_{2}}\right),$$
and the rest of the values on the path are given by the recursion
\begin{equation}
\label{dependparamcase2}
   \begin{cases}
     b_{1}=x_{1} + r\left(1- \frac{1}{x_{2}}\right)  \\
     b_{j+1}= \frac{2}{n} -\frac{1}{b_{j}}
   \end{cases}
\end{equation}
for $n \in[8,\; \infty)$.
We observe that $b_j\not =0$. To see this, notice that $b_j$ is a rational function of $d_n=2 -\frac{2}{n}$, and if this was 0, then $d_n$ would be a root of a polynomial with integral coefficients, which is absurd since $d_n$ is non integral rational number. In what follows, we are going to analyze  the signs of the values $b_1$ and $b_2$. Denoting by $v_1$ and $v_2$, respectively, the corresponding vertices, we can always choose the root of the tree away from these vertices, provided there are other vertices. In case there are no other vertices in the tree, then this tree is already one of the prototype trees.

As the sequence $b$ depends on $r$ we denote
$b_{j}:=b_{j}(r)$ for $r \geq 0$. We observe that if $r=0$, then $b_j=x_j$ and if $r=1$, then $b_j=x_{j+2}$.
We can summarize the main properties of these two sequences in the following
lemma
\begin{lemma}
\label{properties x and b}
Consider the above defined sequences $x_{j}$ and $b_{j}$ then
\begin{itemize}
  \item[a)] $-1 < x_{1} < 0$ and $x_{2} > 1$;
  \item[b)] $x_{1} \leq b_{1}(r)$, for all $r$, with strict inequality when $r \geq 1$.
  \item[c)] If $r=0$ then $b_{1}(r)=x_{1}$ and for $r=1$ we have $b_{1}(r)=x_{3}$;
  \item[d)] The map $r \to b_{1}(r)$ is linear, strictly increasing from $\mathbb{N} \to [x_1, \infty)$
  \item[e)] $b_1 (r+1) - b_1 (r) = 1 - \frac{1}{x_{2}}$ for all $r$.
\end{itemize}
\end{lemma}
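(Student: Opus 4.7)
The plan is to establish the five parts by working through them in a logical order that uses the definitions directly: the base estimates in (a) are the engine that drives everything else, and once we have $x_{2}>1$ the remaining parts (b), (d), (e) reduce to observations about the closed form of $b_{1}(r)$, while (c) is a direct computation.

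First I would prove (a). Since $n\geq 8$, we have $0<\tfrac{2}{n}\leq\tfrac{1}{4}$, so $x_{1}=-1+\tfrac{2}{n}$ lies strictly between $-1$ and $-\tfrac{3}{4}<0$. Consequently $-\tfrac{1}{x_{1}}>1$, and since $\tfrac{2}{n}>0$, the recurrence gives
\[
x_{2}=\tfrac{2}{n}-\tfrac{1}{x_{1}}>1.
\]

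Next I would handle (e), (d), (b) essentially together, since they all follow at once from the closed form $b_{1}(r)=x_{1}+r\bigl(1-\tfrac{1}{x_{2}}\bigr)$. For (e), subtracting the formulas for $b_{1}(r+1)$ and $b_{1}(r)$ immediately yields $b_{1}(r+1)-b_{1}(r)=1-\tfrac{1}{x_{2}}$. For (d), the same expression shows $r\mapsto b_{1}(r)$ is an affine (hence linear in the sense used by the authors) function of $r$, and part (a) gives $x_{2}>1$, so $1-\tfrac{1}{x_{2}}>0$, which makes the map strictly increasing; since $b_{1}(0)=x_{1}$, the image is $\{x_{1}+r(1-\tfrac{1}{x_{2}}):r\in\mathbb{N}\}\subseteq[x_{1},\infty)$. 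For (b), $b_{1}(r)-x_{1}=r\bigl(1-\tfrac{1}{x_{2}}\bigr)\geq 0$, with strict inequality exactly when $r\geq 1$.

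Finally I would verify (c) by direct substitution. When $r=0$, the formula for $b_{1}$ gives $b_{1}(0)=x_{1}$. When $r=1$,
\[
b_{1}(1)=x_{1}+1-\tfrac{1}{x_{2}}=\bigl(\tfrac{2}{n}-1\bigr)+1-\tfrac{1}{x_{2}}=\tfrac{2}{n}-\tfrac{1}{x_{2}}=x_{3},
\]
where the last equality is the defining recurrence for $x_{3}$.

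Honestly, there is no real obstacle here; the only step that requires any care is part (a), because every later assertion depends on the sign of $1-\tfrac{1}{x_{2}}$, which in turn requires $x_{2}>1$. The hypothesis $n\geq 8$ is used exactly once (to get $x_{1}<0$ comfortably away from zero so that $-1/x_{1}$ is well defined and greater than one), and the rest is algebraic bookkeeping from the closed form of $b_{1}(r)$.
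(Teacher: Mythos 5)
Your proof is correct and takes essentially the same route as the paper, which simply declares all five claims to be ``straightforward computations'' and works out only part~(c) as an example; you have supplied the routine details for the remaining parts, with part~(a) correctly identified as the one place where $n\geq 8$ (indeed any $n\geq 3$) is needed to get $-1<x_{1}<0$ and hence $x_{2}>1$. One trivial nit: at $n=8$ one has $x_{1}=-\tfrac{3}{4}$ exactly, so $x_{1}$ is not \emph{strictly} between $-1$ and $-\tfrac{3}{4}$, but this does not affect the conclusion $-1<x_{1}<0$.
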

\begin{proof}
All five claims are straightforward computations. For example, for (c) we
know that $x_{1}=-1+\frac{2}{n}$, $x_{2}=\frac{2}{n}-\frac{1}{x_{1}}$,
$b_{1}(0)=x_{1} + 0 (1- \frac{1}{x_{2}})=x_{1} $ and
$b_{1}(1)=x_{1} + 1 (1- \frac{1}{x_{2}})=
\frac{2}{n}-\frac{1}{x_{2}}=x_{3}.$
\end{proof}

Our first concern is the dependence of the initial condition $b_1 (r)$ with respect to $r$.
\begin{lemma}
\label{depend   init  cond  to r}
Let $n \geq 8$. If $0 \leq r \leq \lfloor\frac{n}{4}\rfloor$ then
$b_{1}(r)<0$.
\end{lemma}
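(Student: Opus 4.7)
The plan is to use the linearity of $r \mapsto b_1(r)$ established in Lemma~\ref{properties x and b}(d)--(e), which reduces the task to checking a single extremal case. Since $b_1$ is strictly increasing in $r$, it suffices to prove the strict inequality $b_1(\lfloor n/4 \rfloor) < 0$; the claim for all smaller $r$ then follows automatically.

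First I would write out $x_1$, $x_2$, and the slope $1 - 1/x_2$ as explicit rational functions of $n$ using the recurrence~\eqref{dependparamx}. A direct calculation yields
\[
x_1 = \frac{2-n}{n}, \qquad x_2 = \frac{n^2 + 2n - 4}{n(n-2)}, \qquad 1 - \frac{1}{x_2} = \frac{4(n-1)}{n^2 + 2n - 4}.
\]
Substituting into $b_1(r) = x_1 + r(1 - 1/x_2)$ and clearing denominators, the desired inequality $b_1(\lfloor n/4 \rfloor) < 0$ becomes the integer inequality
\[
4n(n-1)\,\lfloor n/4 \rfloor \; < \; (n-2)(n^2 + 2n - 4).
\]

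Next I would bound the left-hand side via $\lfloor n/4 \rfloor \leq n/4$, obtaining $4n(n-1)\lfloor n/4 \rfloor \leq n^2(n-1) = n^3 - n^2$. Expanding the right-hand side gives $n^3 - 8n + 8$, so the problem reduces to showing $n^2 - 8n + 8 > 0$, which, rewritten as $n(n-8) + 8$, is immediate for every $n \geq 8$.

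The main subtlety, rather than a genuine obstacle, is how tight this argument is: numerical spot-checks when $n$ is divisible by $4$ show that $b_1(n/4)$ is strictly negative but of quite small magnitude, so there is essentially no room to loosen the bounds. In particular the estimate $\lfloor n/4 \rfloor \leq n/4$ and the $+8$ term in $n(n-8)+8$ are just enough to carry the argument across the threshold $n = 8$, which is why the standing hypothesis $n \geq 8$ (already demanded by the proper-transformation framework) is exactly what is needed here as well.
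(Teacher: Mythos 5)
Your proposal is correct and follows essentially the same route as the paper: both arguments exploit the linearity and monotonicity of $r \mapsto b_1(r)$, reduce to the extremal value $r = \lfloor n/4 \rfloor$ via the bound $\lfloor n/4 \rfloor \leq n/4$, and after clearing denominators arrive at the same quadratic inequality $n^2 - 8n + 8 > 0$, whose largest root $4 + 2\sqrt{2} \approx 6.8$ lies below the threshold $n \geq 8$. The only cosmetic difference is that the paper phrases this as locating the unique zero $r_0$ of the linear function and checking $n/4 < r_0$, whereas you substitute $r = \lfloor n/4 \rfloor$ directly; the computations are identical.
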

\begin{proof}
Assume $n \geq 8$.
Recall that $b_{1}(r)=x_{1} + r(1- \frac{1}{x_{2}})$.
On the non-negative reals, define the linear function into $\mathbb{R}$
$$
g(r) = x_{1} + r (1- \frac{1}{x_{2}}).
$$
Then $g(0)=x_{1}<0$.
Since $g'(r) = (1- \frac{1}{x_{2}} )>0$, it is increasing.
By continuity there is a unique point $g(r_0)=0$.
Solving for $r_0$ we have
$r_0=\frac{-x_{1}}{1- \frac{1}{x_{2}}} > 0$.
Since $x_1 = -\frac{n-2}{n}$
and
$x_2 = \frac{n^2 + 2n - 4}{n^2 - 2n}$,
$r_0$ depends rationally
on $n$
$$
     r_0=\,{\frac { \left( n-2 \right)  \left( {n}^{2}+2\,n-4 \right) }{4n
 \left( n-1 \right) }}.
$$
Therefore $b_{1}(r)<0$ if and only if $r\leq \lfloor r_0 \rfloor$.
We claim that
$\frac{n}{4} < r_0$ for
$n \geq 7$.
Indeed, the inequality $\frac{n}{4} < r_0$ can be simplified
to $0 < n^2 - 8n + 8$ whose largest root is $4 + 2\sqrt{2} \approx 6.8$.
\end{proof}

The proof of
Lemma~\ref{depend   init  cond  to r}
shows that the inequality holds when $n \geq 7$, however it will be more convenient
to assume $n \geq 8$.

\begin{corollary}\label{cor:b2}
Let $T$ be a tree with $n \geq 8$ vertices and $u$ a vertex having $0
\leq r \leq \lfloor  \frac{n}{4}\rfloor$ pendant $P_2$'s and a path $P_q$
with $q \geq 2$. Then $b_2(r) >0.$
\end{corollary}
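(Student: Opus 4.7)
The plan is to combine Lemma~\ref{depend   init  cond  to r}, which pins down the sign of $b_1(r)$, with the defining recurrence~\eqref{dependparamcase2}. The hypothesis $q\geq 2$ ensures that $b_2(r)$ is actually produced by the recurrence $b_{j+1}=\frac{2}{n}-\frac{1}{b_j}$ at $j=1$, i.e.\ that the vertex past $u$ along $P_q$ is an internal degree-two vertex of the path. Once this is in hand, the rest is a one-step sign check.

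First, in the range $0\leq r\leq \lfloor n/4\rfloor$ with $n\geq 8$, Lemma~\ref{depend   init  cond  to r} yields $b_1(r)<0$; in particular $b_1(r)\neq 0$, so its reciprocal is well defined (this also follows from the general rationality remark preceding Lemma~\ref{properties x and b}). Applying the recurrence gives
\[
b_2(r) \;=\; \frac{2}{n} \;-\; \frac{1}{b_1(r)}.
\]
Both terms on the right-hand side are strictly positive: $\frac{2}{n}>0$ because $n\geq 8$, and $-\frac{1}{b_1(r)}>0$ because $b_1(r)$ is negative. Consequently $b_2(r)>0$, which is exactly the claim.

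The proof is essentially a one-step sign chase with no serious obstacle; the real substance lies in Lemma~\ref{depend   init  cond  to r}, which has already been established. The only point worth flagging is the hypothesis $q\geq 2$, which is what licenses the use of the recurrence \eqref{dependparamcase2} to define $b_2(r)$; without it, the vertex at which $b_2$ would be read off could have degree different from $2$ and the formula above would need to be replaced by a modified expression.
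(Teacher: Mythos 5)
Your proof is correct and follows exactly the paper's own argument: invoke Lemma~\ref{depend   init  cond  to r} to get $b_1(r)<0$, then read off the sign of $b_2(r)=\frac{2}{n}-\frac{1}{b_1(r)}$ from the recurrence. The extra remark on why $q\geq 2$ is needed is a sensible clarification but does not change the substance.
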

\begin{proof}
We know from
Lemma~\ref{depend   init  cond  to r}
that $b_1(r) <0$ for $0
\leq r \leq \lfloor \frac{n}{4}\rfloor$ and by the recurrence relation
$b_2(r)=\frac{2}{n}-\frac{1}{b_1(r)}$ we conclude that $b_2(r)>0$.
\end{proof}

We observe that these results do not account for the sign associated with the
vertex where the gpp  $P_q*S_r$ is attached.
We now present the first proper transformation that we use.

\begin{proposition}\label{star-up}
(Star-up transform)
Let $u$ be a vertex that is not a leaf of a tree $T$ with $n \geq 8$
vertices. If $u$  has a path  $P_q, ~~ q\geq 2$ connecting $u$ to a
vertex that has exactly $0 \leq r \leq \lfloor  \frac{n}{4}\rfloor -1$
pendant $P_2$, and no other pendant path, then the transformation
$$P_q \ast S_r \to P_{q-2} \ast S_{r+1}$$ is proper.

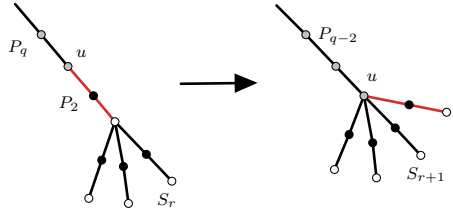
\begin{figure}[!ht]
  \centering
  \definecolor{cqcqcq}{rgb}{0.7529411764705882,0.7529411764705882,0.7529411764705882}
\definecolor{dtsfsf}{rgb}{0.8274509803921568,0.1843137254901961,0.1843137254901961}
\definecolor{ffffff}{rgb}{1,1,1}
\begin{tikzpicture}[line cap=round,line join=round,>=triangle 45,x=1cm,y=1cm, scale=0.6, every node/.style={scale=0.8}]
\clip(-8.3,-1.22) rectangle (9.64,4.2);
\draw [line width=1pt,color=dtsfsf] (-1.636137486647466,2.074186303595756)-- (-1.16,1.5);
\draw [line width=1pt] (-1.16,1.5)-- (-1.452599533144194,0.6482375879164936);
\draw [line width=1pt] (-1.452599533144194,0.6482375879164936)-- (-1.74,-0.2);
\draw [line width=1pt] (-1.16,1.5)-- (-0.9725771932125596,0.507054546760131);
\draw [line width=1pt] (-0.9725771932125596,0.507054546760131)-- (-0.8607602246167193,-0.3130777393171791);
\draw [line width=1pt] (-1.16,1.5)-- (-0.4643182450496526,0.77530232495722);
\draw [line width=1pt] (-0.4643182450496526,0.77530232495722)-- (0.1,0.18);
\draw (-0.42,0.1) node[anchor=north west] {{\tiny $S_r$}};
\draw (-2.22,3.3) node[anchor=north west] {{\tiny $u$}};
\draw [line width=1pt,color=dtsfsf] (-1.636137486647466,2.074186303595756)-- (-2.2008696512729182,2.7236282929150244);
\draw [line width=1pt] (-2.2008696512729182,2.7236282929150244)-- (-2.7938384241296434,3.4295434986968374);
\draw (-2.6,2.25) node[anchor=north west] {{\tiny $P_2$}};
\draw (3.14,3.78) node[anchor=north west] {{\tiny $P_{q-2}$}};
\draw [line width=1pt] (4.3562472196987985,2.0720898207028045)-- (4.027420209340036,1.2150874981592914);
\draw [line width=1pt] (4.027420209340036,1.2150874981592914)-- (3.702699214680401,0.4385807717992969);
\draw [line width=1pt] (4.3562472196987985,2.0720898207028045)-- (4.54979746161858,1.03154954465602);
\draw [line width=1pt] (4.54979746161858,1.03154954465602)-- (4.634507286312397,0.2550428182960255);
\draw [line width=1pt] (4.3562472196987985,2.0720898207028045)-- (5.050008976435578,1.3610707627149674);
\draw [line width=1pt] (5.050008976435578,1.3610707627149674)-- (5.616247219698799,0.7520898207028046);
\draw [line width=1pt,color=dtsfsf] (4.3562472196987985,2.0720898207028045)-- (5.3575056400741365,1.8735652021125633);
\draw [line width=1pt,color=dtsfsf] (5.3575056400741365,1.8735652021125633)-- (6.187520739032391,1.7092281422065607);
\draw (5.1,0.68) node[anchor=north west] {{\tiny $S_{r+1}$}};
\draw (4.22,2.76) node[anchor=north west] {{\tiny $u$}};
\draw [line width=1pt] (4.3562472196987985,2.0720898207028045)-- (3.7346065819817427,2.7277226011085567);
\draw [->,line width=1pt] (0.3,2.34) -- (2,2.36);
\draw [line width=1pt] (3.7346065819817427,2.7277226011085567)-- (3.068363810764865,3.4281316682852707);
\draw [line width=1pt] (3.068363810764865,3.4281316682852707)-- (2.4192041875279076,4.077291291522226);
\draw [line width=1pt] (-2.7938384241296434,3.4295434986968374)-- (-3.372688892870732,4.107222096247378);
\draw (-3.72,3.5) node[anchor=north west] {{\tiny $P_q$}};
\begin{scriptsize}
\draw [fill=black] (-1.636137486647466,2.074186303595756) circle (2.5pt);
\draw [fill=ffffff] (-1.16,1.5) circle (2.5pt);
\draw [fill=black] (-1.452599533144194,0.6482375879164936) circle (2.5pt);
\draw [fill=ffffff] (-1.74,-0.2) circle (2.5pt);
\draw [fill=black] (-0.9725771932125596,0.507054546760131) circle (2.5pt);
\draw [fill=ffffff] (-0.8607602246167193,-0.3130777393171791) circle (2.5pt);
\draw [fill=black] (-0.4643182450496526,0.77530232495722) circle (2.5pt);
\draw [fill=ffffff] (0.1,0.18) circle (2.5pt);
\draw [fill=cqcqcq] (-2.2008696512729182,2.7236282929150244) circle (2.5pt);
\draw [fill=cqcqcq] (-2.7938384241296434,3.4295434986968374) circle (2.5pt);
\draw [fill=cqcqcq] (4.3562472196987985,2.0720898207028045) circle (2.5pt);
\draw [fill=black] (4.027420209340036,1.2150874981592914) circle (2.5pt);
\draw [fill=ffffff] (3.702699214680401,0.4385807717992969) circle (2.5pt);
\draw [fill=black] (4.54979746161858,1.03154954465602) circle (2.5pt);
\draw [fill=ffffff] (4.634507286312397,0.2550428182960255) circle (2.5pt);
\draw [fill=black] (5.050008976435578,1.3610707627149674) circle (2.5pt);
\draw [fill=ffffff] (5.616247219698799,0.7520898207028046) circle (2.5pt);
\draw [fill=black] (5.3575056400741365,1.8735652021125633) circle (2.5pt);
\draw [fill=ffffff] (6.187520739032391,1.7092281422065607) circle (2.5pt);
\draw [fill=cqcqcq] (3.7346065819817427,2.7277226011085567) circle (2.5pt);
\draw [fill=cqcqcq] (3.068363810764865,3.4281316682852707) circle (2.5pt);
\end{scriptsize}
\end{tikzpicture}
\caption{Star-up transform}\label{Star-up:fig}
\end{figure}
\end{proposition}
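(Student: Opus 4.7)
The plan is to apply Algorithm Diagonalize with $x=-d_n$ to both $T$ and $T'$ and show that the count of positive diagonal entries does not decrease; by Lemma \ref{lem:negzeropos} this gives $\sigma(T')\geq \sigma(T)$, which is exactly what it means for the transformation to be proper. Since $u$ is not a leaf, I root both trees at a vertex outside the affected gpp, so the modified region is processed first and communicates with the rest of the tree solely through the single value it transmits to $u$.

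Inside the gpp of $T$, the algorithm outputs $r$ copies of $x_1<0$ at the leaves, $r$ copies of $x_2>0$ at the inner $P_2$ vertices, and then $b_1(r),\dots,b_q(r)$ along the path, with $b_1(r)<0$ by Lemma \ref{depend   init  cond  to r} and $b_2(r)>0$ by Corollary \ref{cor:b2}; the value transmitted to $u$ is $-1/b_q(r)$. For $T'$, the outputs are $r+1$ copies of $x_1$, $r+1$ copies of $x_2$, and then $b_1(r+1)<0,b_2(r+1)>0,\dots,b_{q-2}(r+1)$, transmitting $-1/b_{q-2}(r+1)$ to $u$. The case $q=2$ requires separate treatment, since the gpp of $T'$ collapses to a sun $S_{r+1}$ attached directly at $u$, raising $\deg(u)$ by $r$ and replacing the transmitted value with $-(r+1)/x_2$.

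The crux is to match sign patterns of the two $b$-sequences. Using Lemma \ref{properties x and b}(e), one has $b_1(r+1)=b_1(r)+(1-1/x_2)$, and since the M\"obius map $f(t)=2/n-1/t$ is strictly increasing on each of $(-\infty,0)$ and $(0,\infty)$, an inductive argument gives $b_i(r+1)\geq b_i(r)$ for all $i$. The sharper claim I will establish is that $b_i(r+1)$ and $b_{i+2}(r)$ agree in sign for $i=1,\dots,q-2$: the key inequality $b_1(r+1)\geq f^2(b_1(r))=b_3(r)$ (checked via monotonicity of $f^2$ on the relevant interval), combined with the hypothesis $r\leq\lfloor n/4\rfloor-1$, forces $b_1(r+1)<0$ and keeps both sequences on the same side of the singularity at each iterate. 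This alignment yields an exact match of positive counts inside the gpp, since the extra $x_2$ from the new $P_2$ in $T'$ offsets the two missing path entries in the $b$-sequence.

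Finally, I check that the value transmitted to $u$ does not cost positives downstream. For $q\geq 3$, the same sign-alignment shows that $-1/b_{q-2}(r+1)$ and $-1/b_q(r)$ have the same sign, and a sign-sensitive magnitude comparison of $b_{q-2}(r+1)$ with $b_q(r)$ proves $d(u)$ is no smaller in $T'$ than in $T$. For $q=2$, an explicit computation using the closed forms $b_1(r)=x_1+r(1-1/x_2)$ and $b_2(r)=2/n-1/b_1(r)$ compares $-1/b_2(r)$ with $r-(r+1)/x_2$, again showing the contribution at $u$ does not decrease. The main obstacle will be the sign-alignment step itself: the two sequences $b_\bullet(r)$ and $b_\bullet(r+1)$ are orbits of an elliptic M\"obius transformation and coincide exactly only at the boundary case $r=0$, so rigorously controlling their sign patterns for $r\geq 1$ depends on the monotonicity bounds together with the sharp hypothesis $r\leq\lfloor n/4\rfloor-1$, which prevents either sequence from crossing zero at a mismatched step.
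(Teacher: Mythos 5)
Your overall framework (run \texttt{Diagonalize} at $-d_n$ on both trees and show the count of positive entries cannot drop) is the right one, but the way you set it up creates the very difficulty that sinks the argument. By rooting \emph{outside} the affected gpp you force the entire path to be processed from the sun outward, so you must compare the full orbits $b_1(r),\dots,b_q(r)$ and $b_1(r+1),\dots,b_{q-2}(r+1)$. Your central ``sign-alignment'' claim --- that $b_i(r+1)$ and $b_{i+2}(r)$ agree in sign for every $i$ --- is exactly what you would need, and you do not prove it; you only derive $b_1(r+1)\geq b_3(r)$ from monotonicity and then concede the rest is ``the main obstacle.'' Monotonicity cannot close this: the map $t\mapsto \frac{2}{n}-\frac1t$ sends all negatives to positives but splits the positives across a sign singularity, so $a\geq b$ with $a,b>0$ does not force $f(a)$ and $f(b)$ to share a sign, and once the two orbits straddle the singularity the ordering is destroyed. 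Since for $r\geq 1$ the two orbits are distinct orbits of an elliptic M\"obius map, their sign itineraries will eventually disagree, so the claim cannot hold for all $q$. Separately, your last step is also insufficient: showing that the value transmitted to the attachment vertex $u$ does not decrease is not enough when $u$ is not the root, because a sign change at $u$ propagates to its ancestors and a single inequality at $u$ does not control the net count of positives upstream.

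The paper avoids all of this with one choice you missed: root both trees at the vertex $w$ lying at distance two from the sun-vertex along $P_q$ (for $q=2$ this is $u$ itself; for $q>2$ it is an internal path vertex). With that root, everything outside the replaced block --- including the residual path $P_{q-2}$ toward $u$ and the rest of the tree --- is an identical branch at $w$ processed identically in $T$ and $T'$, so it contributes the same $\xi$ and the same signs; the replaced block contributes $r+1$ negatives and $r+1$ positives in both trees ($x_1,x_2$ on the rays together with $b_1(r)<0$, $b_2(r)>0$ in $T$, versus $r+1$ copies of $x_1,x_2$ in $T'$); and the whole proposition reduces to the single root inequality
\[
f_{T'}(w)-f_T(w)=\Bigl(b_1(r)+\tfrac{1}{\frac{2}{n}-\frac{1}{b_1(r)}}\Bigr)-\Bigl(x_1+\tfrac{1}{\frac{2}{n}-\frac{1}{x_1}}\Bigr)\geq 0,
\]
which follows because the function $g(t)=t+\frac{1}{\frac2n-\frac1t}$ is increasing on $(x_1,0)$ and $x_1\leq b_1(r)<0$. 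No control of the long $b$-orbit and no downstream propagation argument is needed. Your $q=2$ computation is essentially this inequality, so the fix is to realize that the $q=2$ case, applied at $w$ with the correct root, already \emph{is} the general case.
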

\begin{proof}
We consider the transformation on a tree $T$, as illustrated in
Figure~\ref{Star-up:fig}.
This takes one pendant path $P_{2}$ at the vertex $u$ connected to the sun $S_{r}$ formed by $r\,P_{2}$ paths,
and produces a new tree $T^\prime$ with a sun $S_{r+1}$ attached at $u$. We consider $u$ as
the root of $T$, meaning that it is going to be the last vertex to be
processed.

The signs at the vertices of the branch of $T^\prime$ not containing
$S_{r+1}$ are the same as  the signs in $T$.
By Lemma~\ref{depend   init  cond  to r}
and Corollary~\ref{cor:b2}, we
know that $b_1<0$ and $b_2>0$. Hence after applying the transformation, there
are exactly the same number of negative signs in $P(u)=P_2\ast S_r$ of $T$ as
in $P(u)=P_0\ast S_{r+1}$ of $T^\prime$. Hence, to prove that the
transformation is proper, we need to compare $f_{T}(u) $ and
$f_{T^\prime}(u)$, the values obtained by the application of the algorithm
\texttt{Diagonalize$\left(T,-2+\frac{2}{n}\right)$} and
\texttt{Diagonalize$\left(T^\prime,-2+\frac{2}{n}\right)$}.

Applying the algorithm we obtain
\begin{eqnarray*}
f_{T}(u) & = & {\rm deg}_{T}(u) - d_n - \xi -\frac{1}{b_{2}(r)} \\
f_{T'}(u) & = & {\rm deg}_{T}(u)+r - d_n - \xi  -\frac{r+1}{x_{2}} \\
& = & {\rm deg}_{T}(u)+r (1-\frac{1}{x_{2}} ) - d_n - \xi  -\frac{1}{x_{2}},
\end{eqnarray*}
  where $\xi$ is the processing of the part not affected by the transformation.
  Therefore, using the value of $b_1(r)$ and $b_2(r)$ in (\ref{dependparamcase2})
and $x_2$ in (\ref{dependparamx})
\begin{eqnarray}
f_{T'}(u)- f_{T}(u) & =  & \frac{1}{b_{2}(r)}-\frac{1}{x_{2}} +  r     (1-\frac{1}{x_{2}}    ) \nonumber \\
  & = & \frac{1}{b_{2}(r)}-\frac{1}{x_{2}} -x_{1}+  x_{1}+ r \left(1-\frac{1}{x_{2}} \right) \nonumber \\
  & = & \frac{1}{b_{2}(r)}-\frac{1}{x_{2}} -x_{1}+  b_{1}(r) \nonumber \\
 & = &      \left( b_{1}(r)+\frac{1}{b_{2}(r)}     \right)-      \left( x_{1}+\frac{1}{x_{2}}     \right) \nonumber \\
 &  = &      b_{1}(r)+\frac{1}{\frac{2}{n}-\frac{1} {b_{1}(r)}}     -      \left( x_{1}+\frac{1}{\frac{2}{n}-\frac{1}{x_{1}}}      \right) \label{eq:starup}.
\end{eqnarray}
We need to show that (\ref{eq:starup})
non-negative for $0 \leq r  \leq \lfloor  \frac{n}{4}\rfloor -1$.
Consider the function
$$
g(t)=t+\frac{1}{\frac{2}{n}-\frac{1}{t}}-      \left(
x_{1}+\frac{1}{\frac{2}{n}-\frac{1}{x_{1}}}   \right)
$$
We see that $g(x_1)= 0$.
By Lemma~\ref{properties x and b} and Lemma~\ref{depend   init  cond  to r}
one has $-1<x_{1} \leq b_{1}(r)<0$.
We need to show that
$g(t)$ is positive in the interval $(x_1, 0)$.
Now, $g^\prime(t) = 1- \frac{1}{\left(\frac{2}{n}-\frac{1}{t}\right)^2} \cdot \frac{1}{t^2}$. Because $\frac{2t}{n}-1 < -1$, it implies that $  \frac{1} {\left(\frac{2}{n}- \frac{1}{t} \right)^2} \cdot \frac{1}{t^2} < 1 $, which it turns imply that $g'(t) >0$ and thus $g(t)$ is strictly increasing and, hence $g(t) > 0 $ in the desired interval.
This shows that
for each $r \leq  \lfloor  \frac{n}{4}\rfloor -1$,
one has $g(b_1(r)) \geq 0$,
and therefore $f_{T}(u) \leq f_{T^\prime}(u)$.
Therefore, the number of positive values produced by
\texttt{Diagonalize$\left(T^\prime,-2+\frac{2}{n}\right)$}
either equals or exceeds by one,
the number of positive values in
\texttt{Diagonalize$\left(T,-2+\frac{2}{n}\right)$},
showing the Star-up transformation is proper.
\end{proof}
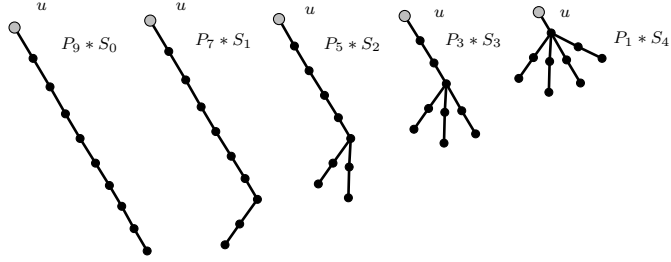
\begin{figure}[t!]
  \centering
   \begin{tikzpicture}[line cap=round,line join=round,>=triangle 45,x=1cm,y=1cm, scale=0.6, every node/.style={scale=0.8}]
\clip(-11.908897407777282,-1.179516504425329) rectangle (7.932543127725042,4.839113942622997);
\draw (-9.048105964315377,3.986228229976376) node[anchor=north west] {{\tiny $P_9\ast S_0$}};
\draw [line width=1pt] (-9.88518021533145,4.020394525936216)-- (-9.475184663813371,3.33706860673942);
\draw [line width=1pt] (-9.475184663813371,3.33706860673942)-- (-9.099355408255134,2.704992131482385);
\draw [line width=1pt] (-9.099355408255134,2.704992131482385)-- (-8.757692448656735,2.1070819521851885);
\draw [line width=1pt] (-8.757692448656735,2.1070819521851885)-- (-8.433112637038255,1.5604212168277525);
\draw [line width=1pt] (-8.433112637038255,1.5604212168277525)-- (-8.091449677439856,1.0137604814703163);
\draw [line width=1pt] (-8.091449677439856,1.0137604814703163)-- (-7.783953013801298,0.5183491900526396);
\draw [line width=1pt] (-7.783953013801298,0.5183491900526396)-- (-7.51062264612258,0.05710419459480273);
\draw [line width=1pt] (-7.51062264612258,0.05710419459480273)-- (-7.23729227844386,-0.43830709682287394);
\draw [line width=1pt] (-7.23729227844386,-0.43830709682287394)-- (-6.946878762785221,-0.9337183882405506);
\draw (-6.075638215809309,4.071643969875975) node[anchor=north west] {{\tiny $P_7\ast S_1$}};
\draw [line width=1pt] (-6.87854617086554,4.174142857755493)-- (-6.468550619347463,3.4908169385586993);
\draw [line width=1pt] (-6.468550619347463,3.4908169385586993)-- (-6.092721363789224,2.858740463301664);
\draw [line width=1pt] (-6.092721363789224,2.858740463301664)-- (-5.751058404190825,2.260830284004468);
\draw [line width=1pt] (-5.751058404190825,2.260830284004468)-- (-5.426478592572346,1.7141695486470314);
\draw [line width=1pt] (-5.426478592572346,1.7141695486470314)-- (-5.084815632973948,1.1675088132895954);
\draw [line width=1pt] (-5.084815632973948,1.1675088132895954)-- (-4.777318969335388,0.6720975218719185);
\draw [line width=1pt] (-4.777318969335388,0.6720975218719185)-- (-4.503988601656669,0.2108525264140817);
\draw [line width=1pt] (-4.503988601656669,0.2108525264140817)-- (-4.896901005194829,-0.3358082089433548);
\draw [line width=1pt] (-4.896901005194829,-0.3358082089433548)-- (-5.221480816813307,-0.7970532044011917);
\draw (-3.222752503162679,4.003311377956296) node[anchor=north west] {{\tiny $P_5\ast S_2$}};
\draw [line width=1pt] (-4.025660458218912,4.208309153715334)-- (-3.6839974986205144,3.610398974418139);
\draw [line width=1pt] (-3.6839974986205144,3.610398974418139)-- (-3.3594176870020354,3.0637382390607026);
\draw [line width=1pt] (-3.3594176870020354,3.0637382390607026)-- (-3.017754727403637,2.517077503703266);
\draw [line width=1pt] (-3.017754727403637,2.517077503703266)-- (-2.7102580637650773,2.0216662122855893);
\draw [line width=1pt] (-2.7102580637650773,2.0216662122855893)-- (-2.436927696086358,1.5604212168277531);
\draw [line width=1pt] (-2.436927696086358,1.5604212168277531)-- (-2.8298400996245183,1.0137604814703163);
\draw [line width=1pt] (-2.8298400996245183,1.0137604814703163)-- (-3.1544199112429965,0.5525154860124795);
\draw [line width=1pt] (-2.436927696086358,1.5604212168277531)-- (-2.471093992046198,0.9283447415707166);
\draw [line width=1pt] (-2.471093992046198,0.9283447415707166)-- (-2.488177140026118,0.2450188223739212);
\draw (-0.540698270315249,4.088727117855895) node[anchor=north west] {{\tiny $P_3\ast S_3$}};
\draw [line width=1pt] (-1.2411073374919628,4.276641745635015)-- (-0.8994443778935653,3.729981010277579);
\draw [line width=1pt] (-0.8994443778935653,3.729981010277579)-- (-0.5919477142550056,3.234569718859902);
\draw [line width=1pt] (-0.5919477142550056,3.234569718859902)-- (-0.3186173465762866,2.7733247234020655);
\draw [line width=1pt] (-0.3186173465762866,2.7733247234020655)-- (-0.7115297501144466,2.226663988044629);
\draw [line width=1pt] (-0.7115297501144466,2.226663988044629)-- (-1.0361095617329248,1.765418992586792);
\draw [line width=1pt] (-0.3186173465762866,2.7733247234020655)-- (-0.35278364253612626,2.1412482481450286);
\draw [line width=1pt] (-0.35278364253612626,2.1412482481450286)-- (-0.36986679051604654,1.4579223289482333);
\draw [line width=1pt] (-0.3186173465762866,2.7733247234020655)-- (0.023045613022112804,2.1754145441048682);
\draw [line width=1pt] (0.023045613022112804,2.1754145441048682)-- (0.33054227666067165,1.6629201047072717);
\draw (3.192605406471941,4.120803593112931) node[anchor=north west] {{\tiny $P_1\ast S_4$}};
\draw [line width=1pt] (1.7313604110141059,4.362057485534614)-- (2.004690778692825,3.900812490076778);
\draw [line width=1pt] (2.004690778692825,3.900812490076778)-- (1.6117783751546648,3.35415175471934);
\draw [line width=1pt] (1.6117783751546648,3.35415175471934)-- (1.2871985635361867,2.8929067592615034);
\draw [line width=1pt] (2.004690778692825,3.900812490076778)-- (1.9705244827329853,3.2687360148197397);
\draw [line width=1pt] (1.9705244827329853,3.2687360148197397)-- (1.953441334753065,2.5854100956229447);
\draw [line width=1pt] (2.004690778692825,3.900812490076778)-- (2.3463537382912243,3.3029023107795794);
\draw [line width=1pt] (2.3463537382912243,3.3029023107795794)-- (2.653850401929783,2.7904078713819827);
\draw (-9.594766699672816,4.77205303705269) node[anchor=north west] {{\tiny $u$}};
\draw (-6.502716915307307,4.754969889072771) node[anchor=north west] {{\tiny $u$}};
\draw (-3.6498312026606774,4.669554149173171) node[anchor=north west] {{\tiny $u$}};
\draw (-0.7798623420341281,4.618304705233411) node[anchor=north west] {{\tiny $u$}};
\draw (2.0,4.60871822089205) node[anchor=north west] {{\tiny $u$}};
\draw [line width=1pt] (2.004690778692825,3.900812490076778)-- (2.602600957990023,3.593315826438219);
\draw [line width=1pt] (2.602600957990023,3.593315826438219)-- (3.1321785453675415,3.3370686067394204);
\begin{scriptsize}
\draw [fill=lightgray] (-9.88518021533145,4.020394525936216) circle (3.5pt);
\draw [fill=black] (-9.475184663813371,3.33706860673942) circle (2.5pt);
\draw [fill=black] (-9.099355408255134,2.704992131482385) circle (2.5pt);
\draw [fill=black] (-8.757692448656735,2.1070819521851885) circle (2.5pt);
\draw [fill=black] (-8.433112637038255,1.5604212168277525) circle (2.5pt);
\draw [fill=black] (-8.091449677439856,1.0137604814703163) circle (2.5pt);
\draw [fill=black] (-7.783953013801298,0.5183491900526396) circle (2.5pt);
\draw [fill=black] (-7.51062264612258,0.05710419459480273) circle (2.5pt);
\draw [fill=black] (-7.23729227844386,-0.43830709682287394) circle (2.5pt);
\draw [fill=black] (-6.946878762785221,-0.9337183882405506) circle (2.5pt);
\draw [fill=lightgray] (-6.87854617086554,4.174142857755493) circle (3.5pt);
\draw [fill=black] (-6.468550619347463,3.4908169385586993) circle (2.5pt);
\draw [fill=black] (-6.092721363789224,2.858740463301664) circle (2.5pt);
\draw [fill=black] (-5.751058404190825,2.260830284004468) circle (2.5pt);
\draw [fill=black] (-5.426478592572346,1.7141695486470314) circle (2.5pt);
\draw [fill=black] (-5.084815632973948,1.1675088132895954) circle (2.5pt);
\draw [fill=black] (-4.777318969335388,0.6720975218719185) circle (2.5pt);
\draw [fill=black] (-4.503988601656669,0.2108525264140817) circle (2.5pt);
\draw [fill=black] (-4.896901005194829,-0.3358082089433548) circle (2.5pt);
\draw [fill=black] (-5.221480816813307,-0.7970532044011917) circle (2.5pt);
\draw [fill=lightgray] (-4.025660458218912,4.208309153715334) circle (3.5pt);
\draw [fill=black] (-3.6839974986205144,3.610398974418139) circle (2.5pt);
\draw [fill=black] (-3.3594176870020354,3.0637382390607026) circle (2.5pt);
\draw [fill=black] (-3.017754727403637,2.517077503703266) circle (2.5pt);
\draw [fill=black] (-2.7102580637650773,2.0216662122855893) circle (2.5pt);
\draw [fill=black] (-2.436927696086358,1.5604212168277531) circle (2.5pt);
\draw [fill=black] (-2.8298400996245183,1.0137604814703163) circle (2.5pt);
\draw [fill=black] (-3.1544199112429965,0.5525154860124795) circle (2.5pt);
\draw [fill=black] (-2.471093992046198,0.9283447415707166) circle (2.5pt);
\draw [fill=black] (-2.488177140026118,0.2450188223739212) circle (2.5pt);
\draw [fill=lightgray] (-1.2411073374919628,4.276641745635015) circle (3.5pt);
\draw [fill=black] (-0.8994443778935653,3.729981010277579) circle (2.5pt);
\draw [fill=black] (-0.5919477142550056,3.234569718859902) circle (2.5pt);
\draw [fill=black] (-0.3186173465762866,2.7733247234020655) circle (2.5pt);
\draw [fill=black] (-0.7115297501144466,2.226663988044629) circle (2.5pt);
\draw [fill=black] (-1.0361095617329248,1.765418992586792) circle (2.5pt);
\draw [fill=black] (-0.35278364253612626,2.1412482481450286) circle (2.5pt);
\draw [fill=black] (-0.36986679051604654,1.4579223289482333) circle (2.5pt);
\draw [fill=black] (0.023045613022112804,2.1754145441048682) circle (2.5pt);
\draw [fill=black] (0.33054227666067165,1.6629201047072717) circle (2.5pt);
\draw [fill=lightgray] (1.7313604110141059,4.362057485534614) circle (3.5pt);
\draw [fill=black] (2.004690778692825,3.900812490076778) circle (2.5pt);
\draw [fill=black] (1.6117783751546648,3.35415175471934) circle (2.5pt);
\draw [fill=black] (1.2871985635361867,2.8929067592615034) circle (2.5pt);
\draw [fill=black] (1.9705244827329853,3.2687360148197397) circle (2.5pt);
\draw [fill=black] (1.953441334753065,2.5854100956229447) circle (2.5pt);
\draw [fill=black] (2.3463537382912243,3.3029023107795794) circle (2.5pt);
\draw [fill=black] (2.653850401929783,2.7904078713819827) circle (2.5pt);
\draw [fill=black] (2.602600957990023,3.593315826438219) circle (2.5pt);
\draw [fill=black] (3.1321785453675415,3.3370686067394204) circle (2.5pt);
\end{scriptsize}
\end{tikzpicture}
 \caption{Repeated applications of Star-up}\label{fig:ex4}
\end{figure}

\begin{example}
\label{ex4}
We consider the proper transformation of a tree $T$ with a path $P_9$ into a
new tree $T^\prime$ with the replacement of $P_9$ by $P_1\ast S_4$, by
successive applications of the Star-up transform. See Figure \ref{fig:ex4}
for an illustration.
\end{example}

We now present the second proper transformation that we will use.

\begin{proposition}(Star-down transform) \label{star-down transformation}
Consider a transformation on a tree $T$ that takes one pendant path $P_{1}$
on a vertex $u$ connected to a sun $S_{r}$ and another pendant path $P_{2}$
on the vertex $u$ and produces a new tree $T'$ with a sun $S_{r+1}$ attached
in $P_{1}$. If $0 \leq r\leq \left\lfloor \frac{n}{4} \right\rfloor -1$  then
$$ P_{1}*S_{r} \oplus  P_{2} \to P_{1} * S_{r+1}$$
is a proper transformation.
\end{proposition}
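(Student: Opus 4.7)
The plan is to compare the sign patterns produced by $\texttt{Diagonalize}\bigl(T,-2+\tfrac{2}{n}\bigr)$ and $\texttt{Diagonalize}\bigl(T',-2+\tfrac{2}{n}\bigr)$, in both cases rooting at $u$ so that $u$ is processed last. Outside the two gpps $P_1\ast S_r$ and $P_2$ affected by the transformation (replaced in $T'$ by the single gpp $P_1\ast S_{r+1}$), the subtrees of $T$ and $T'$ at $u$ are identical, so the algorithm produces the same values there; let $\xi$ denote the contribution $\sum 1/d(c)$ coming from those unaffected children of $u$.

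First I would tally the signs inside the affected region, exclusive of $u$. In $T$, each of the $r$ rays of $S_r$ yields an $x_1<0$ at its leaf and an $x_2>1$ at its inner vertex; the sun center $v$ receives $b_1(r)<0$ by Lemma~\ref{depend   init  cond  to r}, since $r\leq\lfloor n/4\rfloor-1<\lfloor n/4\rfloor$; and the pendant $P_2$ at $u$ contributes one $x_1$ and one $x_2$. In $T'$, the $r+1$ rays of $S_{r+1}$ yield $r+1$ negatives and $r+1$ positives, and the new sun center $v'$ receives $b_1(r+1)<0$ (the hypothesis $r\leq\lfloor n/4\rfloor-1$ is exactly what guarantees $r+1\leq\lfloor n/4\rfloor$). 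Both tallies come to $r+2$ negatives and $r+1$ positives, so the affected regions excluding $u$ match perfectly.

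The comparison therefore reduces to the single vertex $u$. Since $\deg_{T'}(u)=\deg_T(u)-1$,
\begin{align*}
f_T(u)    &= \deg_T(u) - d_n - \xi - \tfrac{1}{b_1(r)} - \tfrac{1}{x_2},\\
f_{T'}(u) &= \deg_T(u) - 1 - d_n - \xi - \tfrac{1}{b_1(r+1)}.
\end{align*}
Setting $\delta:=1-\tfrac{1}{x_2}$, which by Lemma~\ref{properties x and b}(e) equals $b_1(r+1)-b_1(r)$, and using $\tfrac{1}{b_1(r)}-\tfrac{1}{b_1(r+1)}=\tfrac{\delta}{b_1(r)\,b_1(r+1)}$, a direct computation gives
\[
f_{T'}(u)-f_T(u) \;=\; -\delta + \frac{\delta}{b_1(r)\,b_1(r+1)} \;=\; \delta\cdot\frac{1-b_1(r)\,b_1(r+1)}{b_1(r)\,b_1(r+1)}.
\]

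The remaining step is the sign check. By Lemma~\ref{properties x and b}(a),(d) together with Lemma~\ref{depend   init  cond  to r}, both $b_1(r)$ and $b_1(r+1)$ lie in $(x_1,0)\subset(-1,0)$, so their product lies in $(0,1)$; since also $\delta>0$ (because $x_2>1$), both the numerator and denominator of the displayed fraction are strictly positive. Hence $f_{T'}(u)>f_T(u)$, so a positive sign at $u$ in $T$ remains positive in $T'$. Combined with the matching tallies everywhere else, the number of positive diagonal entries cannot decrease, and Lemma~\ref{lem:negzeropos} yields $\sigma(T)\leq\sigma(T')$, so the transformation is proper. The main technical obstacle is the careful accounting that the two affected regions produce the same number of positives; this is precisely what permits the whole comparison to collapse onto the single inequality at $u$, which itself rests on the elementary fact that the product of two numbers in $(-1,0)$ lies in $(0,1)$.
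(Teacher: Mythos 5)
Your proposal is correct and follows essentially the same route as the paper: root at $u$, observe the affected regions contribute identical sign tallies, and reduce to the inequality $f_{T'}(u)-f_T(u)=\bigl(1-\tfrac{1}{x_2}\bigr)\bigl(-1+\tfrac{1}{b_1(r)b_1(r+1)}\bigr)>0$, which holds because $b_1(r),b_1(r+1)\in(-1,0)$. Your explicit count of $r+2$ negatives and $r+1$ positives in each affected region is a welcome elaboration of a step the paper only asserts.
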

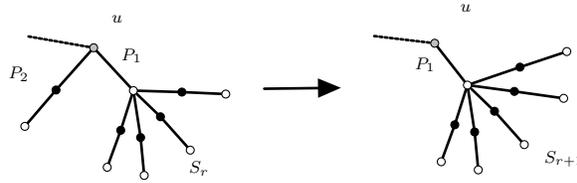
\begin{figure}[!h]
  \centering
  \definecolor{cqcqcq}{rgb}{0.7529411764705882,0.7529411764705882,0.7529411764705882}
\definecolor{ffffff}{rgb}{1,1,1}
\begin{tikzpicture}[line cap=round,line join=round,>=triangle 45,x=1cm,y=1cm,scale=0.6, every node/.style={scale=0.8}]
\clip(0.376571428571434,-0.0) rectangle (28.10152380952386,4.912190476190467);
\draw [line width=1pt] (19.35466666666666,2.199238095238094)-- (19.07466666666666,1.3192380952380938);
\draw [line width=1pt] (19.07466666666666,1.3192380952380938)-- (18.77466666666666,0.4992380952380935);
\draw [line width=1pt] (19.35466666666666,2.199238095238094)-- (19.514666666666656,1.1592380952380936);
\draw [line width=1pt] (19.514666666666656,1.1592380952380936)-- (19.59466666666666,0.3192380952380933);
\draw [line width=1pt] (19.35466666666666,2.199238095238094)-- (19.95466666666666,1.6192380952380936);
\draw [line width=1pt] (19.95466666666666,1.6192380952380936)-- (20.614666666666658,0.8792380952380934);
\draw [line width=1pt] (19.35466666666666,2.199238095238094)-- (20.39466666666666,2.0592380952380944);
\draw [line width=1pt] (20.39466666666666,2.0592380952380944)-- (21.486857142857165,1.905714285714279);
\draw (19.00764,4.17562) node[anchor=north west] {{\tiny $u$}};
\draw [line width=1pt,dash pattern=on 1pt off 1pt] (18.628952380952448,3.1272380952380883)-- (17.269142857142917,3.2885714285714216);
\draw (18,3.0) node[anchor=north west] {{\tiny $P_{1}$}};
\draw (11.5,3.2) node[anchor=north west] {{\tiny $P_{1}$}};
\draw [line width=1pt] (11.953523809523798,2.0766666666666653)-- (11.6735238095238,1.1966666666666657);
\draw [line width=1pt] (11.6735238095238,1.1966666666666657)-- (11.3735238095238,0.3766666666666654);
\draw [line width=1pt] (11.953523809523798,2.0766666666666653)-- (12.1135238095238,1.0366666666666655);
\draw [line width=1pt] (12.1135238095238,1.0366666666666655)-- (12.193523809523798,0.19666666666666566);
\draw [line width=1pt] (11.953523809523798,2.0766666666666653)-- (12.553523809523798,1.4966666666666655);
\draw [line width=1pt] (12.553523809523798,1.4966666666666655)-- (13.2135238095238,0.7566666666666653);
\draw [line width=1pt] (11.953523809523798,2.0766666666666653)-- (13.028380952380964,2.044);
\draw [line width=1pt] (13.028380952380964,2.044)-- (14,2);
\draw (13,0.69488) node[anchor=north west] {{\tiny $S_{r}$}};
\draw (11.28784,3.93604) node[anchor=north west] {{\tiny $u$}};
\draw [line width=1pt] (11.953523809523798,2.0766666666666653)-- (11.073523809523799,3.0266666666666664);
\draw [line width=1pt,dash pattern=on 1pt off 1pt] (11.073523809523799,3.0266666666666664)-- (9.581333333333323,3.2857142857142847);
\draw (8.99852,2.79138) node[anchor=north west] {{\tiny $P_{2}$}};
\draw [line width=1pt] (10.239619047619062,2.090095238095238)-- (11.073523809523799,3.0266666666666664);
\draw [->,line width=1pt] (14.852571428571416,2.118380952380951) -- (16.55257142857142,2.1383809523809516);
\draw (20.87174,0.92798) node[anchor=north west] {{\tiny $S_{r+1}$}};
\draw [line width=1pt] (18.628952380952448,3.1272380952380883)-- (19.35466666666666,2.199238095238094);
\draw [line width=1pt] (10.239619047619062,2.090095238095238)-- (9.548190476190475,1.283428571428565);
\draw [line width=1pt] (19.35466666666666,2.199238095238094)-- (20.518857142857165,2.5971428571428503);
\draw [line width=1pt] (20.518857142857165,2.5971428571428503)-- (21.556,2.942857142857136);
\begin{scriptsize}
\draw [fill=ffffff] (19.35466666666666,2.199238095238094) circle (2.5pt);
\draw [fill=black] (19.07466666666666,1.3192380952380938) circle (2.5pt);
\draw [fill=ffffff] (18.77466666666666,0.4992380952380935) circle (2.5pt);
\draw [fill=black] (19.514666666666656,1.1592380952380936) circle (2.5pt);
\draw [fill=ffffff] (19.59466666666666,0.3192380952380933) circle (2.5pt);
\draw [fill=black] (19.95466666666666,1.6192380952380936) circle (2.5pt);
\draw [fill=ffffff] (20.614666666666658,0.8792380952380934) circle (2.5pt);
\draw [fill=black] (20.39466666666666,2.0592380952380944) circle (2.5pt);
\draw [fill=ffffff] (21.486857142857165,1.905714285714279) circle (2.5pt);
\draw [fill=cqcqcq] (18.628952380952448,3.1272380952380883) circle (2.5pt);
\draw [fill=ffffff] (11.953523809523798,2.0766666666666653) circle (2.5pt);
\draw [fill=black] (11.6735238095238,1.1966666666666657) circle (2.5pt);
\draw [fill=ffffff] (11.3735238095238,0.3766666666666654) circle (2.5pt);
\draw [fill=black] (12.1135238095238,1.0366666666666655) circle (2.5pt);
\draw [fill=ffffff] (12.193523809523798,0.19666666666666566) circle (2.5pt);
\draw [fill=black] (12.553523809523798,1.4966666666666655) circle (2.5pt);
\draw [fill=ffffff] (13.2135238095238,0.7566666666666653) circle (2.5pt);
\draw [fill=black] (13.028380952380964,2.044) circle (2.5pt);
\draw [fill=ffffff] (14,2) circle (2.5pt);
\draw [fill=cqcqcq] (11.073523809523799,3.0266666666666664) circle (2.5pt);
\draw [fill=black] (10.239619047619062,2.090095238095238) circle (2.5pt);
\draw [fill=ffffff] (9.548190476190475,1.283428571428565) circle (2.5pt);
\draw [fill=black] (20.518857142857165,2.5971428571428503) circle (2.5pt);
\draw [fill=ffffff] (21.556,2.942857142857136) circle (2.5pt);
\draw[color=ffffff] (21.77612,3.52343) node {{\tiny $N$}};
\end{scriptsize}
\end{tikzpicture}
  \caption{Star-down.}\label{stardown}
\end{figure}
\begin{proof}
We root both trees at $u$.
We apply the diagonalization algorithm on $T$ and $T^\prime$ and notice that
the number of positive vertices remains unchanged except at the vertex $u$
(as $r \leq \left\lfloor \frac{n}{4} \right\rfloor -1$ we know that $b_1(r)<0$, by Lemma \ref{depend   init  cond  to r}).
Therefore we must compare $f_{T}(u) $ and $f_{T'}(u)$.
We will have $\sigma(T) \leq \sigma(T^\prime)$
if $f_{T}(u) \leq f_{T'}(u)$.
Applying the algorithm we obtain
\begin{eqnarray*}
f_{T}(u) & = & {\rm deg}_{T}(u) - d_n - \xi -\frac{1}{b_{1}(r)}-\frac{1}{x_{2}} \\
f_{T'}(u) & = & {\rm deg}_{T}(u)-1 - d_n - \xi  -\frac{1}{b_{1}(r+1)}
\end{eqnarray*}
 where $\xi$ is the processing of the part not affected by the transform.
Using Lemma~\ref{properties x and b}, part e
\begin{eqnarray}
     f_{T'}(u)- f_{T}(u)  &  =  & -\left(1-\frac{1}{x_{2}}\right)+\left(\frac{1}{b_{1}(r)}-\frac{1}{b_{1}(r+1)}\right) \nonumber \\
&  = &\left(1-\frac{1}{x_{2}}\right) \left(-1+ \frac{1}{b_{1}(r)b_{1}(r+1)}\right) \label{eqstardown}
\end{eqnarray}
Using the properties in Lemma~\ref{properties x and b} and Lemma~\ref{depend   init  cond  to r},
$-1< b_{1}(r)< b_{1}(r+1)<0$ and $x_2 > 1$,
and so (\ref{eqstardown}) is positive.
This completes the proof.
\end{proof}
  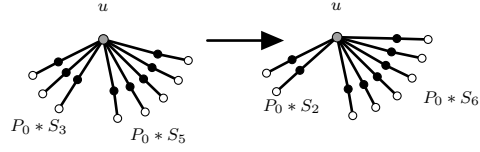
\begin{figure}[!h]
    \centering
    \definecolor{ffffff}{rgb}{1,1,1}
\definecolor{aqaqaq}{rgb}{0.6274509803921569,0.6274509803921569,0.6274509803921569}
\begin{tikzpicture}[line cap=round,line join=round,>=triangle 45,x=1cm,y=1cm, scale=0.6, every node/.style={scale=0.8}]
\clip(-13.6012295968823,0.0) rectangle (6.463158959939575,4.805676620378542);
\draw (-6.906316985123277,1.3991466915441925) node[anchor=north west] {{\tiny $P_0\ast S_5$}};
\draw [line width=1pt] (-6.712667580186978,2.1382756118033517)-- (-6.377235745288217,1.5787974130889981);
\draw [line width=1pt] (-6.413750662374381,2.3113327747474863)-- (-5.973241520334764,1.8865561020664288);
\draw [line width=1pt] (-6.224961030071689,2.5630522844844092)-- (-5.642859663805052,2.279867836030371);
\draw (-7.618541731055087,4.16315577689871) node[anchor=north west] {{\tiny $u$}};
\draw [line width=1pt] (-6.0991012752032265,2.8619692022970056)-- (-5.422605092785244,2.7203769780699862);
\draw [line width=1pt] (-8.30164698540131,2.688912039352871)-- (-8.868015882309388,2.389995121540275);
\draw [line width=1pt] (-8.128589822457174,2.4529249989745057)-- (-8.64776131128958,1.9809509182177751);
\draw [line width=1pt] (-7.876870312720248,2.2956003053889287)-- (-8.285914516042753,1.6505690616880637);
\draw [line width=1pt] (-7.069111212764833,2.0468308175584737)-- (-6.987714098944055,1.436352463902636);
\draw (-9.551723184298574,1.6) node[anchor=north west] {{\tiny $P_0\ast S_3$}};
\draw (-0.4148971579161992,2.2317207159311977) node[anchor=north west] {{\tiny $P_0\ast S_6$}};
\draw (-3.9556716091200594,2.0503236021104196) node[anchor=north west] {{\tiny $P_0\ast S_2$}};
\draw [line width=1pt] (-8.30164698540131,2.688912039352871)-- (-7.303111235491821,3.18809139660251);
\draw [line width=1pt] (-8.128589822457174,2.4529249989745057)-- (-7.303111235491821,3.18809139660251);
\draw [line width=1pt] (-7.876870312720248,2.2956003053889287)-- (-7.303111235491821,3.18809139660251);
\draw [line width=1pt] (-7.069111212764833,2.0468308175584737)-- (-7.303111235491821,3.18809139660251);
\draw [line width=1pt] (-6.712667580186978,2.1382756118033517)-- (-7.303111235491821,3.18809139660251);
\draw [line width=1pt] (-6.413750662374381,2.3113327747474863)-- (-7.303111235491821,3.18809139660251);
\draw [line width=1pt] (-6.224961030071689,2.5630522844844092)-- (-7.303111235491821,3.18809139660251);
\draw [line width=1pt] (-6.0991012752032265,2.8619692022970056)-- (-7.303111235491821,3.18809139660251);
\draw [line width=1pt] (-1.533792892847703,2.1874478223882)-- (-1.2695668530343724,1.680543805364971);
\draw [line width=1pt] (-1.234875975035106,2.360504985332335)-- (-0.7943668329954888,1.935728312651277);
\draw [line width=1pt] (-1.046086342732413,2.6122244950692575)-- (-0.46398497646577663,2.3290400466152197);
\draw (-2.449825003435659,4.2038543338091) node[anchor=north west] {{\tiny $u$}};
\draw [line width=1pt] (-0.920226587863951,2.911141412881854)-- (-0.2437304054459689,2.7695491886548345);
\draw [line width=1pt] (-3.12277229806203,2.7380842499377196)-- (-3.6891411949701074,2.4391673321251237);
\draw [line width=1pt] (-2.949715135117897,2.5020972095593543)-- (-3.4688866239502993,2.0301231288026234);
\draw [line width=1pt] (-0.8015334485648966,3.2067396895045666)-- (-0.10965798108828018,3.186390411049372);
\draw [line width=1pt] (-1.8800452066902102,2.0875293744688626)-- (-1.7782988144142373,1.4974002992682196);
\draw [line width=1pt] (-3.12277229806203,2.7380842499377196)-- (-2.124236548152547,3.237263607187358);
\draw [line width=1pt] (-2.949715135117897,2.5020972095593543)-- (-2.124236548152547,3.237263607187358);
\draw [line width=1pt] (-0.8015334485648966,3.2067396895045666)-- (-2.124236548152547,3.237263607187358);
\draw [line width=1pt] (-1.8800452066902102,2.0875293744688626)-- (-2.124236548152547,3.237263607187358);
\draw [line width=1pt] (-1.533792892847703,2.1874478223882)-- (-2.124236548152547,3.237263607187358);
\draw [line width=1pt] (-1.234875975035106,2.360504985332335)-- (-2.124236548152547,3.237263607187358);
\draw [line width=1pt] (-1.046086342732413,2.6122244950692575)-- (-2.124236548152547,3.237263607187358);
\draw [line width=1pt] (-0.920226587863951,2.911141412881854)-- (-2.124236548152547,3.237263607187358);
\begin{scriptsize}
\draw [fill=aqaqaq] (-7.303111235491821,3.18809139660251) circle (3pt);
\draw [fill=black] (-6.712667580186978,2.1382756118033517) circle (2.5pt);
\draw [fill=ffffff] (-6.377235745288217,1.5787974130889981) circle (2.5pt);
\draw [fill=black] (-6.413750662374381,2.3113327747474863) circle (2.5pt);
\draw [fill=ffffff] (-5.973241520334764,1.8865561020664288) circle (2.5pt);
\draw [fill=black] (-6.224961030071689,2.5630522844844092) circle (2.5pt);
\draw [fill=ffffff] (-5.642859663805052,2.279867836030371) circle (2.5pt);
\draw [fill=black] (-6.0991012752032265,2.8619692022970056) circle (2.5pt);
\draw [fill=ffffff] (-5.422605092785244,2.7203769780699862) circle (2.5pt);
\draw [fill=black] (-8.30164698540131,2.688912039352871) circle (2.5pt);
\draw [fill=ffffff] (-8.868015882309388,2.389995121540275) circle (2.5pt);
\draw [fill=black] (-8.128589822457174,2.4529249989745057) circle (2.5pt);
\draw [fill=ffffff] (-8.64776131128958,1.9809509182177751) circle (2.5pt);
\draw [fill=black] (-7.876870312720248,2.2956003053889287) circle (2.5pt);
\draw [fill=ffffff] (-8.285914516042753,1.6505690616880637) circle (2.5pt);
\draw [fill=black] (-7.069111212764833,2.0468308175584737) circle (2.5pt);
\draw [fill=ffffff] (-6.987714098944055,1.436352463902636) circle (2.5pt);
\draw [fill=aqaqaq] (-2.124236548152547,3.237263607187358) circle (3pt);
\draw [fill=black] (-1.533792892847703,2.1874478223882) circle (2.5pt);
\draw [fill=ffffff] (-1.2695668530343724,1.680543805364971) circle (2.5pt);
\draw [fill=black] (-1.234875975035106,2.360504985332335) circle (2.5pt);
\draw [fill=ffffff] (-0.7943668329954888,1.935728312651277) circle (2.5pt);
\draw [fill=black] (-1.046086342732413,2.6122244950692575) circle (2.5pt);
\draw [fill=ffffff] (-0.46398497646577663,2.3290400466152197) circle (2.5pt);
\draw [fill=black] (-0.920226587863951,2.911141412881854) circle (2.5pt);
\draw [fill=ffffff] (-0.2437304054459689,2.7695491886548345) circle (2.5pt);
\draw [fill=black] (-3.12277229806203,2.7380842499377196) circle (2.5pt);
\draw [fill=ffffff] (-3.6891411949701074,2.4391673321251237) circle (2.5pt);
\draw [fill=black] (-2.949715135117897,2.5020972095593543) circle (2.5pt);
\draw [fill=ffffff] (-3.4688866239502993,2.0301231288026234) circle (2.5pt);
\draw [fill=black] (-0.8015334485648966,3.2067396895045666) circle (2.5pt);
\draw [fill=ffffff] (-0.10965798108828018,3.186390411049372) circle (2.5pt);
\draw [fill=black] (-1.8800452066902102,2.0875293744688626) circle (2.5pt);
\draw [fill=ffffff] (-1.7782988144142373,1.4974002992682196) circle (2.5pt);
\draw [->,line width=1pt] (-5,3.16315577689871) -- (-3.3,3.16315577689871);
\end{scriptsize}
\end{tikzpicture}
    \caption{An example of a trivial Star-star transform}\label{star-star:fig1}
  \end{figure}
The third transformation we use is the following.
\begin{samepage}
\begin{proposition}\label{star-star}(Star-star transform)
Suppose $u$ has at least two generalized pendant paths
$P_{q_1}\ast S_{r_1}$ and $P_{q_2}\ast S_{r_2}$, where $q_1, q_2 \in \{ 0,1 \}$.
The following transformations are proper:
\begin{itemize}
  \item[a)] If $q_{1}=q_{2}=0$,
for any $r'_{1},r'_{2}$ such that $r'_{1}+r'_{2}= r_{1}+r_{2}$;
$$
(P_{0} \ast S_{r_1}) \oplus (P_{0} \ast S_{r_2}) \to (P_{0} \ast S_{r'_1}) \oplus (P_{0} \ast S_{r'_2}) ,
$$

  \item[b)] If $q_{1}=q_{2}=1$ and $0 \leq r_1,r_2 \leq \lfloor \frac{n}{4} \rfloor$;
$$(P_{1} \ast S_{r_1}) \oplus (P_1 \ast S_{r_2})
      \to \left\{\begin{array}{lrr}
P_2 \ast S_{r_1+r_2} & \mbox{~if~} & r_1+r_2 \leq \lfloor \frac{n}{4}\rfloor \\
P_0 \ast S_{r_1+r_2-\lfloor \frac{n}{4}\rfloor } \oplus (P_2 \ast
S_{\lfloor \frac{n}{4}\rfloor})
& \mbox{~if~} & r_1+r_2 > \lfloor \frac{n}{4}\rfloor \\
\end{array}\right.
$$
  \item[c)] If $q_{1}=1$, $q_{2}=0$ and $0 \leq r_1,r_2 \leq \lfloor \frac{n}{4} \rfloor$;
$$(P_1 \ast S_{r_1}) \oplus (P_{0} \ast
      S_{r_2}) \to \left\{\begin{array}{lrr}
P_1 \ast S_{r_1+r_2} & \mbox{~if~} & r_1+r_2 \leq \lfloor \frac{n}{4}\rfloor \\
P_0 \ast S_{r_1+r_2-\lfloor \frac{n}{4}\rfloor } \oplus (P_1 \ast
S_{\lfloor \frac{n}{4}\rfloor})
& \mbox{~if~} & r_1+r_2 > \lfloor \frac{n}{4}\rfloor \\
\end{array}\right.$$
\end{itemize}
\end{proposition}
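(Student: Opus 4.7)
The plan is to mimic the proofs of Propositions \ref{star-up} and \ref{star-down transformation}: root both $T$ and $T^\prime$ at $u$, apply \texttt{Diagonalize}$(\cdot,-d_n)$, and compare the numbers of positive diagonal entries produced by the two runs. Everything outside the modified generalized pendant paths at $u$ is processed identically in both trees, so I only need to track the signs assigned to vertices inside the affected branches together with the value $f(u)$ at the root.

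Part (a) is essentially a graph-isomorphism observation: both $(P_0\ast S_{r_1})\oplus(P_0\ast S_{r_2})$ and $(P_0\ast S_{r'_1})\oplus(P_0\ast S_{r'_2})$ describe exactly $u$ with $r_1+r_2 = r'_1+r'_2$ pendant $P_2$'s, so $T\cong T^\prime$ and $\sigma(T)=\sigma(T^\prime)$ trivially.

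For part (c) I plan to prove the stronger statement $f_{T^\prime}(u)\ge f_T(u)$ directly. Lemma \ref{properties x and b} and Lemma \ref{depend   init  cond  to r} show that in each subcase the non-root vertices of the transformed region contribute identical counts ($r_1+r_2$ positives and $r_1+r_2+1$ negatives) in $T$ and in $T^\prime$, since the only ``central'' vertices ($v_1$ in $T$; $v^\prime$ and possibly a new sun vertex in $T^\prime$) have negative $b_1$ values. Comparing $f_T(u)$ with $f_{T^\prime}(u)$ and applying the telescoping identity $b_1(s)-b_1(r)=(s-r)(1-1/x_2)$ from Lemma \ref{properties x and b}(e), the difference factors cleanly as
\[
    f_{T^\prime}(u)-f_T(u) \;=\; \delta\left(\frac{1}{b_1(\alpha)\,b_1(\beta)}-1\right),
\]
for suitable $\alpha,\beta\le\lfloor n/4\rfloor$, which makes the right factor positive since $b_1(\alpha),b_1(\beta)\in(-1,0)$ in this range. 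Here $\delta = r_2(1-1/x_2)$ in the subcase $r_1+r_2\le\lfloor n/4\rfloor$ and $\delta = (\lfloor n/4\rfloor-r_1)(1-1/x_2)$ in the split subcase, and the hypothesis $r_1\le\lfloor n/4\rfloor$ is exactly what is needed for $\delta\ge 0$.

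Part (b) is the main obstacle, because the analogous comparison $f_{T^\prime}(u)\ge f_T(u)$ will in fact fail; the rescue comes from a sign-count advantage in the non-root vertices. In $T$ the two branches contribute two extra negatives $b_1(r_1),b_1(r_2)$ at $v_1,v_2$, giving $r_1+r_2+2$ negatives and $r_1+r_2$ positives outside $u$. In $T^\prime$ each subcase introduces a new intermediate vertex whose value $b_2(r)$ is positive by Corollary \ref{cor:b2} (applied with $r = r_1+r_2$ or $r = \lfloor n/4\rfloor$), yielding $r_1+r_2+1$ of each sign --- a net gain of one positive outside $u$. A four-case analysis on the signs of $f_T(u)$ and $f_{T^\prime}(u)$ then delivers $\sigma(T)\le\sigma(T^\prime)$: in the worst case ($f_T(u)>0$ and $f_{T^\prime}(u)<0$) the two totals merely tie, and every other combination strictly favours $T^\prime$. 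For the split subcase one last sanity check is that $r_0:=r_1+r_2-\lfloor n/4\rfloor$ satisfies $r_0\le\lfloor n/4\rfloor$, which is immediate from $r_1,r_2\le\lfloor n/4\rfloor$ and is needed so that the residual sun values $b_1(r_0)$ remain negative.
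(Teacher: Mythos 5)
Your proposal is correct and follows essentially the same route as the paper: part (a) is the same isomorphism observation, part (b) is the identical argument that Corollary~\ref{cor:b2} yields one extra positive sign off the root so the sign at $u$ is irrelevant, and part (c) is the paper's reduction to repeated Star-down transformations written out as a single telescoped computation (your factored difference is exactly (\ref{eqstardown}) summed over the transferred $P_2$'s, with $\alpha=r_1$ and $\beta=r_1+r_2$ or $\beta=\lfloor n/4\rfloor$). One harmless inaccuracy: in the split subcases the residual $P_0\ast S_{r_1+r_2-\lfloor n/4\rfloor}$ consists of rays attached directly at $u$, so no ``residual sun value'' $b_1(r_0)$ ever arises --- the relevant sign counts there are just $x_1<0$ and $x_2>0$ per ray.
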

\end{samepage}

\begin{figure}[!ht]
    \centering
    \input{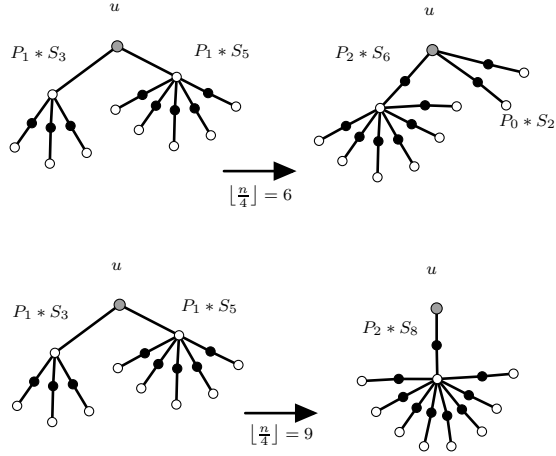}
    \caption{An example of a 1-1 Star-star transform. On top $\left\lfloor
    \frac{n}{4}\right\rfloor = 6$. On bottom $\left\lfloor
    \frac{n}{4}\right\rfloor = 9$.}\label{star-star:fig2}
\end{figure}
\begin{proof}
The case $q_{1}=0$ and $q_{2}=0$ (see Figure~\ref{star-star:fig1}) is
actually a formal rearrangement. We do not change the graph, only perform a
different partition of the $r_1+r_2$ original $P_2's$ attached to $u$. In
this case it is not necessary to consider the sign of any vertices. It is a
{\em trivial} Star-star transform,  shown in Figure~\ref{star-star:fig1}.

We notice that by Corollary \ref{cor:b2}, the case $q_{1}=1$ and $q_{2}=1$
(see Figure~\ref{star-star:fig2}) is clearly proper because the number of
positive signs (black vertices) increases by one. Therefore, it does not
matter what happens in $u$, the transformation is proper.

The case $q_{1}=1$ and $q_{2}=0$ (see Figure~\ref{star-star:fig3})  is
actually a particular application of several Star-down transformations
(Proposition~\ref{star-down transformation}).
In each step, a $P_2$ from the $P_0\ast
S_{r_2}$ is brought down to the gpp $P_1\ast S_{r_1}$. Thus the Star-star
transformation is proper.
\begin{figure}[!ht]
    \centering
    \input{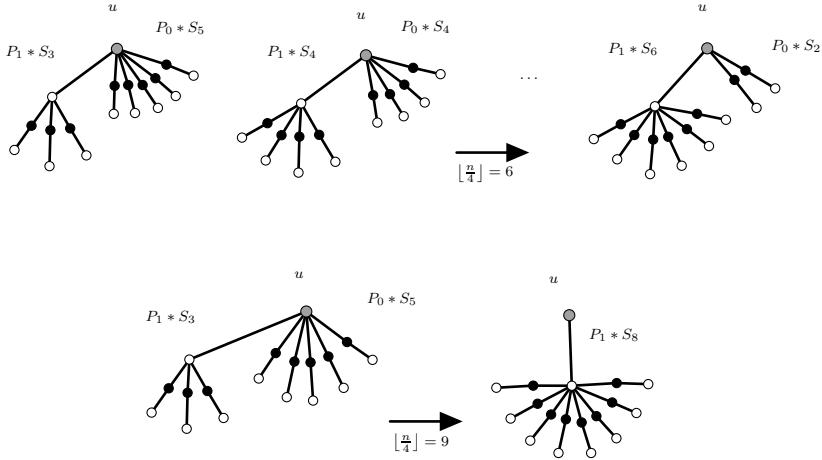}
\caption{An example of a 1-0 Star-star transform.
On top $\left\lfloor \frac{n}{4}\right\rfloor = 6$.
On bottom $\left\lfloor \frac{n}{4}\right\rfloor = 9$.}
\label{star-star:fig3}
  \end{figure}
\end{proof}

\begin{figure}[!h]
\input{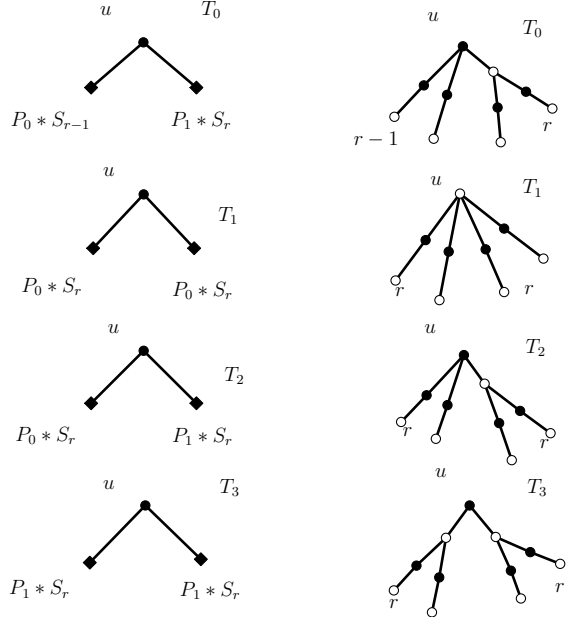}
\caption{Left: trees $T_\alpha, ~\alpha=\{0,1,2,3\}$ in $(P_q,S_r)$
notation. Right: the actual tree $T_\alpha$. The minus signs (white vertices)
and plus signs(black vertices) are defined by Lemma~\ref{properties x and b},
Lemma~\ref{depend   init  cond  to r} and Theorem~\ref{thr:Talpha}.} \label{TreeAlpha}
\end{figure}

\section{Equality for prototype trees}\label{sec:prot}
In Section \ref{sec:red} we present a reduction procedure that properly
transforms several  gpp's attached to a starlike vertex to a single gpp,
leading to an eventual
decrease on the number of starlike vertices of a tree.
Then, in Section \ref{sec:small}, we show that a tree $T$ of order $n$ with $k=0$ or $1$
starlike vertices can be properly transformed into a prototype tree $T_\alpha$,
where $n \equiv \alpha \pmod 4$. These prototype trees $T_\alpha$ are
defined as follows.

\begin{definition}\label{prot}
Let $r \geq 2$ and $\alpha \in \{0,1,2,3\}$. For $n = 4r +\alpha \geq 8$,
define the tree $T_\alpha$ of order $n$ given in Figure \ref{TreeAlpha}.
\end{definition}

If $T$ is properly transformed to $T_\alpha$, that is $\sigma(T) \leq
\sigma(T_\alpha)$, in order to prove the conjecture is true, it will
remain to prove that  $\sigma(T_\alpha) \leq \lfloor\frac{n}{2}\rfloor$.
As a warm up application of the results of the previous section, we
prove here  this result.
In fact, we prove that $T_\alpha$ satisfy the
equality in Theorem \ref{thr:main}, that is $\sigma(T_\alpha) =
\lfloor\frac{n}{2}\rfloor$.

\begin{samepage}
\begin{theorem}\label{thr:Talpha}
For $r \geq 2$, $\alpha \in \{0,1,2,3\}$, let
 $n = 4r +\alpha$, and $d_n = 2- \frac{2}{n}$.  Then
\begin{enumerate}
  \item[(a)] ${\displaystyle m_{T_0}([0, d_n)) = 2r}$;
  \item[(b)] ${\displaystyle m_{T_1}([0, d_n)) = 2r+1}$;
  \item[(c)] ${\displaystyle m_{T_2}([0, d_n)) = 2r+1}$;
  \item[(d)] ${\displaystyle m_{T_3}([0, d_n)) = 2r+2}$.
\end{enumerate}
In particular ${\displaystyle m_{T_\alpha}([0, d_n)) = \left\lceil
\frac{n}{2} \right\rceil}$, or equivalently, ${\displaystyle\sigma(T_\alpha)
= \left \lfloor\frac{n}{2} \right\rfloor}$.
\end{theorem}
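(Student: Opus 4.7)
The plan is to apply $\texttt{Diagonalize}(T_\alpha, -d_n)$ with $x = -d_n = -2 + \tfrac{2}{n}$ and invoke Lemma~\ref{lem:negzeropos}: the number of negative diagonal entries produced equals $m_{T_\alpha}[0, d_n)$. I would root each prototype at its central vertex $u$ so that $u$ is processed last. The vertex values assigned on the generalized pendant paths are exactly those analyzed in Section~\ref{sec:prop}: each leaf of a pendant $P_2$ receives $x_1 < 0$, each middle vertex receives $x_2 > 0$ (Lemma~\ref{properties x and b}), and in every $P_1 \ast S_r$ branch the sun center $v$ (which sits as a child of $u$) receives $b_1(r) \in (-1, 0)$, strictly negative by Lemma~\ref{depend   init  cond  to r} since in each prototype the single-branch sun size satisfies $r \leq \lfloor n/4 \rfloor$. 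Hence every sign in the Diagonalize output is pinned down except possibly that of $f(u)$.

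The core of the proof is then to compute and sign $f(u)$ in each of the four cases. Using the identities $1 + x = x_1$, $x - x_1 = -1$, and $r(1 - 1/x_2) = b_1(r) - x_1$, together with part~(e) of Lemma~\ref{properties x and b} to absorb residual $1/x_2$ terms, I expect the closed forms
\begin{align*}
f_{T_0}(u) &= b_1(r-1) - 1/b_1(r), & f_{T_1}(u) &= b_1(2r) - 1, \\
f_{T_2}(u) &= b_1(r) - 1/b_1(r),   & f_{T_3}(u) &= \tfrac{2}{n} - 2/b_1(r).
\end{align*}
Because $b_1(k) \in (-1, 0)$ whenever $1 \leq k \leq \lfloor n/4 \rfloor$, the cases $T_0$, $T_2$, $T_3$ are immediate: for $T_3$ both summands are positive; for $T_2$ rewrite $f(u) = (b_1(r)^2 - 1)/b_1(r) > 0$; for $T_0$ note that $-1/b_1(r) > 1 > |b_1(r-1)|$, so $f(u) > 0$.

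The only delicate case is $T_1$, where $f(u) < 0$ is required, i.e., $b_1(2r) < 1$. Since $2r > \lfloor n/4 \rfloor$ for $n = 4r+1$, Lemma~\ref{depend   init  cond  to r} does not apply and the inequality must be verified by hand. Expanding $b_1(2r) = x_1 + 2r(1 - 1/x_2)$ with the closed forms $x_1 = (2-n)/n$ and $1 - 1/x_2 = 4(n-1)/(n^2 + 2n - 4)$, the condition $b_1(2r) < 1$ reduces to $4rn < n^2 + 2n - 4$, which for $n = 4r+1$ simplifies to $12r > 1$; this is the one real technical step. Finally, totalling the negatives—all leaves, all sun-center children of $u$, and $u$ itself only in the $T_1$ case—yields $2r$, $2r+1$, $2r+1$, and $2r+2$ for $\alpha = 0, 1, 2, 3$ respectively, which is precisely $\lceil n/2 \rceil$.
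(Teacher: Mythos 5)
Your proposal is correct, and its skeleton is the same as the paper's: run \texttt{Diagonalize}$(T_\alpha,-d_n)$, invoke Lemma~\ref{lem:negzeropos}, observe that every diagonal entry except $f(u)$ has a forced sign ($x_1<0$ at ray leaves, $x_2>1$ at ray midpoints, $b_1(r)<0$ at any sun centre distinct from $u$), and then determine the sign of $f(u)$ in each case. Where you genuinely diverge is in how that last sign is established. The paper substitutes the closed forms of $x_2$ and $b_1$ in terms of $r$ with Maple, obtains explicit rational functions, and for $T_0$ settles positivity by remarking that ``a plot of $f(u)$ shows it is positive''; for $T_1$ it sidesteps the computation entirely by citing the known Laplacian spectrum of that tree. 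You instead rewrite $f(u)$ compactly in terms of $b_1$ --- your identities $f_{T_0}(u)=b_1(r-1)-1/b_1(r)$, $f_{T_2}(u)=b_1(r)-1/b_1(r)$, $f_{T_3}(u)=\frac{2}{n}-2/b_1(r)$ and $f_{T_1}(u)=b_1(2r)-1$ all check out against the paper's expressions --- and then need only the containment $b_1(k)\in(-1,0)$ for $k\le\lfloor n/4\rfloor$ to dispatch $\alpha=0,2,3$, while $\alpha=1$ reduces to the hand-verifiable inequality $4rn<n^2+2n-4$, which for $n=4r+1$ is immediate. Your route buys a self-contained, computer-algebra-free and citation-free proof, and in the $T_0$ case it replaces an appeal to a plot with an actual inequality argument, which is arguably a strengthening; the paper's route is more mechanical and would generalize to trees where no such clean cancellation occurs. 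One caution: your sign bookkeeping (``all leaves, all sun-centre children of $u$, and $u$ itself only for $T_1$'') is right, but it relies on $u$ being the sun centre of every $P_0\ast S_r$ summand so that those suns contribute no extra negative vertex; it is worth stating that explicitly, since it is exactly why $T_1$ needs $f(u)<0$ while the other three need $f(u)>0$.
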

\end{samepage}

\begin{proof}
We first observe that in all cases
$\frac{n}{4}= r + \frac{\alpha}{4},$ and
as $0\leq \alpha/4 < 1$,
we have
$r = \left\lfloor \frac{n}{4}\right\rfloor$,
so therefore
when applying
\texttt{Diagonalize($T_\alpha,-2+\frac{2}{n}$)}, the value $b_1 <0$, by Lemma \ref{depend   init  cond  to r}.
We used Maple to compute $f(u)$ for each $\alpha$, as shown below.
\begin{itemize}
  \item [(a)]
      We observe that
$T_0=P(u)=  P_0\ast S_{r-1}  \oplus P_1\ast S_{r}$,
and $n = 4r$.
In $P_0\ast S_{r-1}$ there are $r-1$ negative signs and
      $r-1$ positive signs. Due to the observation above, in $P_1 \ast S_r$
      there are $r+1$ negative signs and $r$ positive.
      To show equality we need to show that
      the value at the vertex $u$ is positive.
Then
\begin{eqnarray}
f(u) & = & \deg u - d_n - (r-1)\frac{1}{x_2} - \frac{1}{b_1} \nonumber \\
& = & r -(2-\frac{2}{4r}) - \frac{(r-1)}{x_2} - \frac{1}{b_1} \nonumber \\
& = & r-2 +\frac{1}{2r} - \frac{r-1}{x_2} - \frac{1}{b_1}.  \label{eq:tzero}
\end{eqnarray}
One can write
$x_2 = \frac{4r^2 + 2r -1 }{(2r)(2r-1)}$
and $b_1 = \frac{-2r^2 + 4r - 1}{2r(4r^2 + 2r -1)}$.
Substituting into  (\ref{eq:tzero}) we obtain
$$
 f(u)= ~
\frac{1}{2}~ {\frac {64\,{r}^{6}+64\,{r}^{5}-36\,{r}^{4}+36\,{r}^{3}
-32\,{r}^{
2}+10\,r-1}{r \left( 4\,{r}^{2}+2\,r-1 \right)  \left( 2\,{r}^
{2}-4\,r
+1 \right) }}
$$
A plot of $f(u)$
shows it is positive for all $r\geq 2$. This proves that $\sigma(T_0) = \lfloor
      \frac{n}{2}\rfloor$.

\vspace{.2in}

\item[(b)] The same reasoning can be done by applying our method, but we may also recall the result \cite[Proposition 3.1]{AKBARI2020}, where it is shown that the spectrum of $T_1$ is $$ \left[0^{[1]},\theta^{[2r-1]},\lambda_1^{[1]},\overline{\theta}^{[2r-1]}, \lambda_2^{[1]}\right],$$
    where $(\theta, \overline{\theta}) = (\frac{3-\sqrt{5}}{2}, \frac{3+ \sqrt{5}}{2}), ~~ \lambda_i,~ i=1,2$ are the roots of $p(x)= x^2 - (2r+3)x  +(4r+1)$. Now it is easy to see that $0,\theta, \lambda_1 < 2-2/n$, while $ \overline{\theta}, \lambda_2 > 2-2/n$. Hence  exactly $2r+1$ Laplacian eigenvalues are smaller than $2-2/n$.

\vspace{.2in}

\item[(c)] $T_2=P(u) = P_0\ast S_r \oplus P_1\ast S_{r}$.
There are $2r+1$ negative signs and $r$ positive
      signs except at the vertex $u$.
Here $n = 4r + 2$.
$$
f(u) = r+1 - d_n - \frac{r}{x_2} - \frac{1}{b_1}
$$
Expressing $n$, $x_2$ and $b_1$ in terms of $r$, and substituting in $f(u)$,
      leads to
$$
f(u)
= \frac {64\,{r}^{6}+256\,{r}^{5}+348\,{r}^{4}+260\,{r}^{3}+95\,
{r}^{2}
+16\,r+1}{ \left( 2\,r+1 \right)  \left( 4\,{r}^{2}+6\,r+1
\right) r
 \left( 6\,r+1 \right) }
$$
which is clearly positive
       proving that $\sigma(T_2) = \lfloor \frac{n}{2}\rfloor$.

\vspace{.2in}

  \item[(d)] Proceeding similarly as in the previous case,
$T_3=P(u)= P_1\ast S_r  \oplus P_1\ast S_{r}$, we obtain $2r+2$ negative signs and
      $2r$ positive signs. To show equality we need to show that the value at $u$ is
      positive.
Here $n = 4r + 3$, and
$$
f(u) = 2 - d_n - \frac{2}{b_1}.
$$
Writing $n$ and $b_1$ in terms of $r$, and substituting into $f(u)$,
yields
$$f(u)=
 \,{\frac { 4 ( 128\,{r}^{4}+448\,{r}^{3}+576\,{r}^{2}+302\,r+55 )}
 { \left( 4
\,r+3 \right)  \left( 64\,{r}^{2}+52\,r+11 \right) }}
$$
which is clearly positive showing that $\sigma(T_3)
      =2r+1= \lfloor \frac{n}{2}\rfloor$.

\end{itemize}
\end{proof}

\section{Reduction: Starlike vertices}
\label{sec:red}

In some cases a starlike vertex $u$ can be properly
transformed in such a way that its generalized pendant paths are reduced to a
{\em single} generalized pendant path. This is possible when the weight of $u$ is
small, more precisely when $w(u) \leq 2\lfloor n/4 \rfloor$.

\begin{theorem} \label{annihilation}
Consider a tree $T$ in $(P_q, S_r)$ representation and $u$ a starlike vertex
with $\ell \geq 2$ generalized pendant paths, or $P(u)= P_{q_1}\ast S_{r_1}
\oplus \cdots \oplus P_{q_\ell }\ast S_{r_\ell}$. If
$w(u) \leq 2\lfloor \frac{n}{4}\rfloor$
then we can properly transform $T$ to $T^\prime$ obtaining
  $$P_{q_1}\ast S_{r_1} \oplus \cdots \oplus P_{q_t}\ast S_{r_t} \Rightarrow  P_{q' }\ast S_{r'}$$
  where $2\leq t \leq \ell$ and
\[  \left\{
      \begin{array}{l}
        q'\equiv \sum q_i \pmod 2 \in \{0,1\} \\
        r' =\frac{w(u) -q'}{2} \leq \lfloor \frac{n}{4}\rfloor
      \end{array}
    \right.
\]
\end{theorem}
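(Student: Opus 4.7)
The plan is to apply the Star-up (Proposition \ref{star-up}) and Star-star (Proposition \ref{star-star}) transformations in sequence to collapse the $\ell$ generalized pendant paths at $u$ into a single one. The crucial observation is that all of the local transformations introduced so far preserve the weight $w(u) = \sum_i (q_i + 2 r_i)$: Star-up trades a $-2$ in path length for a $+1$ in sun size, and each variant of Star-star keeps $\sum (q_i + 2 r_i)$ fixed. Hence the global hypothesis $w(u) \leq 2\lfloor n/4 \rfloor$ persists throughout, and this will justify the numerical preconditions required to invoke each transformation.

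First, I would run Star-up iteratively on each gpp $P_{q_i}\ast S_{r_i}$ individually until $q_i$ is reduced to $0$ or $1$. The hypothesis for the $j$-th consecutive application ($j = 0,1,\dots,\lfloor q_i/2\rfloor - 1$) is that the current sun count $r_i + j$ lies in $\{0,\dots,\lfloor n/4 \rfloor - 1\}$; this holds because $r_i + j \leq r_i + \lfloor q_i/2\rfloor - 1 \leq (q_i + 2 r_i)/2 - 1 \leq w(u)/2 - 1 \leq \lfloor n/4 \rfloor - 1$. After this phase, every gpp at $u$ has the form $P_0 \ast S_{s_i}$ or $P_1 \ast S_{t_j}$.

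Now write $k_0,k_1$ for the numbers and $S,T$ for the sun-count sums of the two types, so $w(u) = 2S + k_1 + 2T$. Repeated use of Proposition \ref{star-star}(a) collapses all $P_0$-gpps into a single $P_0\ast S_S$. Next I pair up $P_1$-gpps: for two of them, $P_1\ast S_a$ and $P_1\ast S_b$, Proposition \ref{star-star}(b) in its first case produces $P_2\ast S_{a+b}$, and one further Star-up produces $P_0\ast S_{a+b+1}$. The required bound $a + b \leq \lfloor n/4\rfloor - 1$ holds whenever at least two $P_1$-gpps are still present, since then the current total $P_1$ sun-count satisfies $T_{\text{current}} \leq \lfloor n/4 \rfloor - k_{\text{current}}/2 \leq \lfloor n/4\rfloor - 1$. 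The resulting new $P_0$-gpp is immediately absorbed into the big one by Star-star (a). If $\sum_i q_i$ is even, the procedure terminates with a single $P_0 \ast S_{w(u)/2}$, giving $q' = 0$ and $r' = w(u)/2$. If $\sum_i q_i$ is odd, a lone $P_1\ast S_{T'}$ remains and Proposition \ref{star-star}(c) in its first case (whose hypothesis $S' + T' \leq \lfloor n/4 \rfloor$ follows from $2S' + 2T' + 1 = w(u) \leq 2\lfloor n/4 \rfloor$) fuses it with $P_0\ast S_{S'}$ to yield the single gpp $P_1 \ast S_{(w(u)-1)/2}$, giving $q'=1$ and $r' = (w(u)-1)/2$.

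The main obstacle is not any single transformation but the global accounting: each Star-up or Star-star invocation carries a numerical precondition on sun sizes that must be verified at every step of a long sequence. The uniform tactic is to use weight preservation to translate the single global assumption $w(u) \leq 2\lfloor n/4\rfloor$ into a local bound on the pair of sun counts currently being combined, as illustrated in each estimate above. The parity statement $q' \equiv \sum_i q_i \pmod 2$ then follows because every Star-up changes $q$ by $-2$ (preserving parity) and every Star-star merges $q$-values additively, while the bound $r' = (w(u)-q')/2 \leq \lfloor n/4\rfloor$ is an immediate consequence of weight preservation together with the main hypothesis.
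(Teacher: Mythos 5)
Your proposal is correct and follows essentially the same route as the paper: both arguments rest on the observation that Star-up and Star-star preserve $w(u)$, use Star-up to normalize each $P_{q_i}\ast S_{r_i}$ to path length $0$ or $1$, and then merge via Star-star, with the hypothesis $w(u)\leq 2\lfloor n/4\rfloor$ supplying every local precondition. The only difference is organizational --- the paper proceeds by induction on the number $t$ of gpps, merging two at a time via its $t=2$ base case, whereas you run a direct staged sweep (normalize all, collapse the $P_0$'s, pair off the $P_1$'s, fix parity at the end) --- and your bookkeeping estimates at each stage check out.
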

\begin{proof}
The proof is by induction on $t$.
For $t =2$ assume
$$
P(u)= P_{q_1}\ast S_{r_1} \oplus  P_{q_2}\ast S_{r_2},
$$
where $w(u) \leq 2\lfloor \frac{n}{4}\rfloor$.
Then we may write
$$
w(u)= q_1 + 2r_1 + q_2 + 2r_2= \alpha_1+\alpha_2 + 2(k_1+k_2 + r_1+r_2)
$$
where $\alpha_i \equiv q_i \pmod 2$
and $k_i = \lfloor \frac{q_i}{2} \rfloor$
for $i=1,2$.
Since
$r_1+k_1 < \frac{w(u)}{2} \leq \lfloor \frac{n}{4}\rfloor$,
by Proposition~\ref{star-up},
the Star-up transform
$$P_{q_1}\ast S_{r_1} \Rightarrow P_{q_1-2}\ast S_{r_1+1}$$
can be performed $k_1$ times.
Hence, the transformation
$$P_{q_1}\ast S_{r_1} \Rightarrow P_{\alpha_1}\ast S_{r_1+k_1}$$
is proper, and similarly,
$$P_{q_2}\ast S_{r_2} \Rightarrow P_{\alpha_2}\ast S_{r_2+k_2}$$
is proper.
As $\alpha_1, \alpha_2 \in \{0,1\}$ by Proposition~\ref{star-star},
a Star-star operation can be
performed since  $r_1+k_1 + r_2+k_2 \leq \lfloor \frac{n}{4}\rfloor$
producing
$$
P_{\alpha_1}\ast S_{r_1+k_1} \oplus P_{\alpha_2}\ast S_{r_2+k_2} \Rightarrow P_{\alpha_1+\alpha_2}\ast S_{r_1+k_1 + r_2+k_2}.
$$

If $q'=\alpha_1+\alpha_2 \in \{0,1\}$ we are done, otherwise, if
$\alpha_1+\alpha_2=2$ we can perform an additional Star-up transformation
$$P_{\alpha_1+\alpha_2}\ast S_{r_1+k_1 + r_2+k_2} \Rightarrow P_{0} \ast S_{r_1+k_1 + r_2+k_2 +1}$$
because $2(r_1+k_1 + r_2+k_2 +1)=w(u) \leq 2\lfloor \frac{n}{4}\rfloor$,
and so $r_1+k_1 + r_2+k_2 \leq \lfloor \frac{n}{4}\rfloor -1$.

To complete the induction, assume the theorem is true for some $t\geq 2$. To obtain the correctness for $t+1$, first  transform
$$
P_{q_1}\ast S_{r_1} \oplus \ldots \oplus P_{q_t}\ast S_{r_t} \Rightarrow P_{q'}\ast S_{r'},
$$
and then repeat the proof for $t = 2$
on $P_{q'}\ast S_{r'} \oplus P_{q_{t+1}}\ast S_{r_{t+1}}$.
\end{proof}

\begin{center}
\begin{figure}[!ht]
{\small {\tt
\begin{tabbing}
aaa\=aaa\=aaa\=aaa\=aaa\=aaa\=aaa\=aaa\= \kill
     \> \texttt{ReduceStarVertex}($T,u$)\\
     \> {\bf input}: a tree $T$ with $n$ vertices \\
     \> \> \> ~ a starlike vertex $u$ with $P(u)= P_{q_1}\ast S_{r_1} \oplus \cdots \oplus P_{q_\ell}\ast S_{r_\ell}$. \\
     \> \> \> ~ precondition $w(u) \leq 2 \lfloor \frac{n}{4} \rfloor$   \\
     \> {\bf output}: a tree $T^\prime$ where the {\gpps} at $u$ are replaced by a single \gpp. \\
     \> \> Let $w = w(u)$ \\
     \> \> Compute  $q'=\sum q_i \mod 2 $. \\
     \> \> Compute $r' =\frac{w -q'}{2}$.      \\
     \> \> Replace in $T$ all {\gpps} at $u$ with $P(u) = P_{q'}\ast S_{r'}$, forming $T'$.\\
     \> \> {\bf if}   $\deg_{T'\setminus P(u)} (u) = 1$  {\bf then} \\
     \> \> \>  find $v$, the nearest vertex to $u$ having $\deg_{T'}(v) > 2$ \\
     \> \> \>  remove $u$ and path to $v$\\
     \> \> \>  create {\gpp} $P(v) = P_{q''} * (P_{q'} * S_{r'}) = P_{q''+q'} * S_{r'}$, where $d(u,v) = q''$ \\
     \>{\bf return} $T'$.
\end{tabbing}}
\caption{Procedure \texttt{ReduceStarVertex}.}\label{red-star}}
\end{figure}
\end{center}

Theorem \ref{annihilation} above justifies the introduction of our Procedure
\texttt{ReduceStarVertex}
shown in Figure \ref{red-star}
which replaces {\em all} the generalized pendant paths at
a starlike vertex by a {\em single} \gpp.
It also labels the new generalized pendant path with
$P_{q'} * S_{r'}$
calculated in the procedure.
If $u$ is a leaf in the tree $T'\setminus P(u)$,
we remove it from starlike vertices list and collapse the path
to the nearest vertex $v$ whose degree is greater than two,
creating the gpp $P_{q'+q''} * S_{r'}$ at $v$ where $d(u,v) = q''$.
If the procedure were applied to $u_1$ in the
tree of Figure \ref{fig:an_example:0001}(right), for example,
we would first form $P_1* S_1$
and then the path of length one
would be collapsed
creating the gpp $P_2* S_1$, as Figure \ref{fig:an_example:0203}(left) illustrates.

As this example illustrates, the vertex $u$, after applying \text{ReduceStarLike}, will no longer be a starlike vertex or it will collapse and removed from the representation.
Hence the number of starlike vertices may be reduced by one.
However, a new starlike vertex may be created.
As an example, consider applying
\texttt{ReduceStarVertex} at vertex $u_1$ of Figure
\ref{fig:an_example:0001}(left).
Initially $P(u_1) = P_1 \ast S_0 \oplus P_1 \ast S_0$, having weight 2.
After applying the procedure we obtain
$P(u_1)= P_1 \ast S_1$.
Now $u_1$ is no
longer a starlike vertex, but its neighbor $v$ now has
$P(v) = P_1 \ast S_1 \oplus P_1 \ast S_0$ with weight 4.
In general, this happens when
the body of the if-statement in \texttt{ReduceStarVertex}
is executed, and the vertex $v$ {\em already} had a single \gpp.
The if-statement is {\em not} always executed as
Figures~\ref{fig:an_example:0405}~and~\ref{fig:an_example:0607} show.
A full example of the reduction procedure is executed at the end of
this paper in Section~\ref{sec:exa}.

We claim that \texttt{ReduceStarVertex} will always be called with a vertex
$u$ satisfying the precondition $w(u) \leq 2 \lfloor \frac{n}{4}\rfloor$.
Note it is only called when there are at least two tarlike vertices,
and is passed the one with smallest weight.

\begin{lemma}\label{lem:nu-star-vertices} Let $T$ be a tree with $n\geq 8$ vertices and $k\geq 2$ starlike vertices. Then at least one of the starlike vertices has weight $w \leq 2 \lfloor \frac{n}{4}\rfloor$.

\end{lemma}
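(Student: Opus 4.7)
The plan is a simple counting argument. Order the starlike vertices as $u_1,\dots,u_k$ so that $w(u_1)\leq\cdots\leq w(u_k)$, and aim to prove
\begin{equation*}
   \sum_{i=1}^{k} w(u_i)\ \leq\ n-k.
\end{equation*}
From this the minimum satisfies $w(u_1)\leq (n-k)/k$, and a brief case analysis on $n \bmod 4$ then yields the desired bound $(n-k)/k\leq 2\lfloor n/4\rfloor$.

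The main work is establishing the displayed inequality, and this is where the key structural observation is needed. In the $(P_q,S_r)$ representation---and here I rely on the invariants maintained by \texttt{InitiateRepresentation} and by every application of \texttt{ReduceStarVertex}---each generalized pendant path $P_q\ast S_r$ at a vertex is a branch depicted as a single edge leading into a square (the sun $S_r$), which is a leaf of the pictorial representation. If we collapse each gpp to this edge-plus-square we obtain a skeleton tree in which the starlike vertices are internal nodes and every gpp-square is a leaf adjacent to exactly one internal node. Distinct starlike vertices $u_i\neq u_j$ therefore own disjoint families of gpp-squares, so the gpps attached to $u_i$ and to $u_j$ are vertex-disjoint as subgraphs of $T$; and no starlike vertex can lie inside any gpp, because the only vertex of a gpp having degree greater than two in $T$ is the sun centre, which is represented as a square and hence is not an internal node of the skeleton. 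Because $w(u_i)$ counts exactly the vertices of the gpps attached at $u_i$, with $u_i$ itself excluded, summing across $i$ and adding the $k$ starlike vertices gives at most $n$, yielding $\sum_i w(u_i)\leq n-k$. The main obstacle here is not difficulty but care: one must invoke the representational invariants correctly, so that the skeleton really is a tree with squares only at its leaves.

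To finish, write $n=4m+\alpha$ with $\alpha\in\{0,1,2,3\}$, so $\lfloor n/4\rfloor=m$. For $k=2$,
\begin{equation*}
   w(u_1)\ \leq\ \frac{n-2}{2}\ =\ 2m+\frac{\alpha-2}{2},
\end{equation*}
and since $w(u_1)$ is a non-negative integer and $\alpha\leq 3$, this forces $w(u_1)\leq 2m = 2\lfloor n/4\rfloor$ in each of the four residue cases. For $k\geq 3$ one has $w(u_1)\leq (n-k)/k\leq (n-3)/3\leq 2\lfloor n/4\rfloor$ for all $n\geq 8$, completing the proof.
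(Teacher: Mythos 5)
Your proof is correct and rests on the same idea as the paper's: the weights of distinct starlike vertices count pairwise disjoint vertex sets, so they sum to at most $n-k$, and then arithmetic with $n=4\lfloor n/4\rfloor+\alpha$, $\alpha\in\{0,1,2,3\}$, gives the bound. The paper phrases this as a two-term contradiction ($n\geq w(u_1)+w(u_2)+2\geq 2w_{\min}+2$ forces $n\geq 4\lfloor n/4\rfloor+4$), whereas you bound the full sum and split into cases $k=2$ and $k\geq 3$; your write-up is somewhat longer but also makes explicit the disjointness justification that the paper leaves implicit.
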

\begin{proof} Let $u_1, u_2, \ldots, u_k$ be the $k\geq 2$ starlike vertices ordered by weight so that $w(u_1)\geq w(u_2)\geq \ldots \geq w(u_k)$. Suppose, by contradiction, that $w(u_k)\geq 2\lfloor \frac{n}{4}\rfloor + 1$.
As $n= 4 \lfloor \frac{n}{4}\rfloor + \alpha$, with $\alpha \in \{0,1,2,3\}$, we see that $n \geq w(u_1)+w(u_2) +2  \geq 2 \, w(u_k) + 2 \geq 2(2\lfloor \frac{n}{4}\rfloor+1)+2 = 4 \lfloor \frac{n}{4}\rfloor+4$, an absurd. Hence, we conclude that $w(u_k) \leq 2 \lfloor \frac{n}{4}\rfloor$ and the result follows.
\end{proof}

The correctness of algorithm \texttt{Transform} depends on
not only showing each procedure is correct,
but also showing that it {\em halts}.
Clearly it halts if and only if its {\bf while} loop halts.
Being a local transformation,
\texttt{ReduceStarVertex}
only operates on a starlike vertex $u$, and can not create
more than one new starlike vertex
since $u$ is adjacent
(or has a path)
to only one other vertex.
Hence the number of starlike vertices
{\em does not} increase by the application of \texttt{ReduceStarVertex}.
Moreover, once a new starlike vertex is created, its weight includes the weight of $u$ and,
hence, the total weight of the starlike vertices increase.
Therefore \texttt{Transform} must stop, as the total weight is bounded by $n$.

\section{Reduction: Small number of starlike vertices}
\label{sec:small}
In this section we show that a tree with fewer than
two starlike vertices can be properly transformed into a prototype tree.
We introduce a new notation for the entire tree.
Here
$$
T = v + X \oplus Y
$$
denotes a tree in which $v$ is the root, $X$ and $Y$
are gpps attached to $v$.
Using this notation,
$T=v+ P_{q'} \ast  S_{r'} \oplus  P_{q''} \ast  S_{r''}$
or
$T=u+ P_a \ast S_{r'} \oplus  P_b \ast S_{r''}$
are different representations of the same tree, provided $q'+q'' = a+ b$ (see Figure~\ref{fig:diff_rep})

\begin{figure}[!ht]
  \centering
\definecolor{ffffff}{rgb}{1,1,1}
\begin{tikzpicture}[line cap=round,line join=round,>=triangle 45,x=1cm,y=1cm,scale=0.7, every node/.style={scale=0.65}]
\clip(-10.981674418604651,-1.031069767441856) rectangle (9.278790697674415,4.4);
\draw [line width=1pt] (3.9948837209302326,2.935116279069768)-- (2.954883720930233,0.9451162790697673);
\draw [line width=1pt] (3.9948837209302326,2.935116279069768)-- (4.934883720930233,0.9451162790697673);
\draw [line width=1pt] (-2.5,0.5)-- (-2.88,-0.14);
\draw [line width=1pt] (-2.04,0.38)-- (-2.02,-0.34);
\draw [line width=1pt] (0.1,0.84)-- (0.13901111225519902,-0.08257277074754431);
\draw [line width=1pt] (0.13901111225519902,-0.08257277074754431)-- (0.17377014694634793,-0.7429944298793733);
\draw [line width=1pt] (0.1,0.84)-- (0.54,0.12);
\draw [line width=1pt] (0.54,0.12)-- (0.92,-0.57);
\draw (-6.681953488372095,3.483534883720928) node[anchor=north west] {$v$};
\draw (-0.5,2.9) node[anchor=north west] {$u$};
\draw (-5.691441860465118,3.5908837209302304) node[anchor=north west] {$T$};
\draw (-2.3372093023255847,3.5008372093023237) node[anchor=north west] {$T$};
\draw (-2.7199069767441895,-0.234046511627908435) node[anchor=north west] {$r'$};
\draw (0.29665116279069337,-0.66120930232557944) node[anchor=north west] {$r''$};
\draw [line width=1pt] (-2.5,0.5)-- (-2.08,1.18);
\draw [line width=1pt] (-2.08,1.18)-- (-2.04,0.38);
\draw (-8.662976744186048,0.7245581395348842) node[anchor=north west] {$X=P_{q'} \ast S_{r'}$};
\draw (-5.894046511627909,0.7245581395348842) node[anchor=north west] {$Y=P_{q''} \ast S_{r''}$};
\draw (-0.964,4.0) node[anchor=north west] {$v$};
\draw [line width=1pt] (-1.02,3.52)-- (-1.3269625666099947,2.8771470259876106);
\draw [line width=1pt] (-1.3269625666099947,2.8771470259876106)-- (-1.6,2.26);
\draw [line width=1pt] (-1.6,2.26)-- (-1.8411290764915043,1.7323989473834942);
\draw [line width=1pt] (-1.8411290764915043,1.7323989473834942)-- (-2.08,1.18);
\draw [line width=1pt] (-1.02,3.52)-- (-0.8127960567284852,2.9935620848287074);
\draw [line width=1pt] (-0.8127960567284852,2.9935620848287074)-- (-0.5799659390462922,2.4696943200437724);
\draw [line width=1pt] (-0.5799659390462922,2.4696943200437724)-- (-0.3568370762675239,1.9458265552588379);
\draw [line width=1pt] (-0.3568370762675239,1.9458265552588379)-- (-0.12400695858533095,1.3831537708602044);
\draw [line width=1pt] (-0.12400695858533095,1.3831537708602044)-- (0.1,0.84);
\draw [line width=1pt] (-6.224837209302326,2.912883720930232)-- (-7.264837209302326,0.9228837209302319);
\draw [line width=1pt] (-6.224837209302326,2.912883720930232)-- (-5.304837209302326,0.9428837209302319);
\draw (4.506325581395343,3.5908837209302304) node[anchor=north west] {$T$};
\draw (1.9174883720930183,0.747069767441861) node[anchor=north west] {$X=P_{a} \ast S_{r'}$};
\draw (4.461302325581389,0.7245581395348842) node[anchor=north west] {$Y=P_{b} \ast S_{r''}$};
\draw [line width=1pt] (-0.3128563387297367,0.07384288536262569)-- (-0.6952057203323747,-0.6039582911147776);
\draw [line width=1pt] (-0.3128563387297367,0.07384288536262569)-- (0.1,0.84);
\draw (3.47,3.5) node[anchor=north west] {$u$};
\begin{scriptsize}
\draw [fill=black] (3.9948837209302326,2.935116279069768) circle (2.5pt);
\draw [fill=black] (2.954883720930233,0.9451162790697673) ++(-3.5pt,0 pt) -- ++(3.5pt,3.5pt)--++(3.5pt,-3.5pt)--++(-3.5pt,-3.5pt)--++(-3.5pt,3.5pt);
\draw [fill=black] (4.934883720930233,0.9451162790697673) ++(-3.5pt,0 pt) -- ++(3.5pt,3.5pt)--++(3.5pt,-3.5pt)--++(-3.5pt,-3.5pt)--++(-3.5pt,3.5pt);
\draw [fill=black] (-1.02,3.52) circle (2.5pt);
\draw [fill=black] (-2.5,0.5) circle (2.5pt);
\draw [fill=black] (-2.88,-0.14) circle (2.5pt);
\draw [fill=black] (-2.04,0.38) circle (2.5pt);
\draw [fill=black] (-2.02,-0.34) circle (2.5pt);
\draw [fill=black] (0.1,0.84) circle (2.5pt);
\draw [fill=black] (0.13901111225519902,-0.08257277074754431) circle (2.5pt);
\draw [fill=black] (0.17377014694634793,-0.7429944298793733) circle (2.5pt);
\draw [fill=black] (0.54,0.12) circle (2.5pt);
\draw [fill=black] (0.92,-0.57) circle (2.5pt);
\draw [fill=black] (-2.08,1.18) circle (2.5pt);
\draw [fill=black] (-2.5,0.5) circle (2.5pt);
\draw [fill=black] (-2.04,0.38) circle (2.5pt);
\draw [fill=black] (-1.3269625666099947,2.8771470259876106) circle (2.5pt);
\draw [fill=black] (-1.6,2.26) circle (2.5pt);
\draw [fill=black] (-1.8411290764915043,1.7323989473834942) circle (2.5pt);
\draw [fill=black] (-0.8127960567284852,2.9935620848287074) circle (2.5pt);
\draw [fill=black] (-0.5799659390462922,2.4696943200437724) circle (2.5pt);
\draw [fill=black] (-0.3568370762675239,1.9458265552588379) circle (2.5pt);
\draw [fill=black] (-0.12400695858533095,1.3831537708602044) circle (2.5pt);
\draw [fill=black] (-6.224837209302326,2.912883720930232) circle (2.5pt);
\draw [fill=black] (-7.264837209302326,0.9228837209302319) ++(-3.5pt,0 pt) -- ++(3.5pt,3.5pt)--++(3.5pt,-3.5pt)--++(-3.5pt,-3.5pt)--++(-3.5pt,3.5pt);
\draw [fill=black] (-5.304837209302326,0.9428837209302319) ++(-3.5pt,0 pt) -- ++(3.5pt,3.5pt)--++(3.5pt,-3.5pt)--++(-3.5pt,-3.5pt)--++(-3.5pt,3.5pt);
\draw [fill=black] (-0.3128563387297367,0.07384288536262569) circle (2.5pt);
\draw [fill=black] (-0.6952057203323747,-0.6039582911147776) circle (2.5pt);
\end{scriptsize}
\end{tikzpicture} 
\caption{Different representations for $T$ as $T=v+ P_{4} \ast  S_{2} \oplus  P_{5} \ast  S_{3}$ or $T=u+ P_{6} \ast  S_{2} \oplus  P_{3} \ast  S_{3}$ and $4+5=6+3$.}\label{fig:diff_rep}
\end{figure}
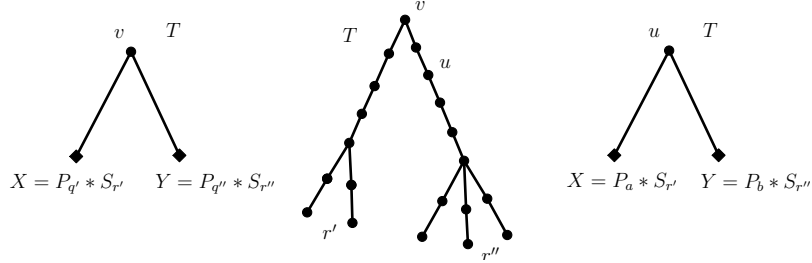

Given a tree in $(P_q, S_r)$ representation, recall that a starlike vertex is
a vertex $u$ of degree $\geq 3$ having at least two generalized pendant paths
$P_q\ast S_r$. In this section we deal with trees that have
fewer than 2 starlike
vertices. We always assume that $n= 4 \lfloor \frac{n}{4}\rfloor + \alpha$
and $\alpha=0,1,2,3$. Let $k \geq 0$ be the number of starlike vertices in
this representation. We first handle the case $k = 0$.

\begin{lemma}
\label{lem:no-star}
$T$ has a starlike vertex iff it has a vertex $v$,
$\deg(v) \geq 3$, that is not a part of any {\gpp}.
\end{lemma}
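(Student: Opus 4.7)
The plan is to classify each vertex of $T$ by its role in the $(P_q,S_r)$ representation: either it lies inside some gpp (as a path-internal vertex, a sun leaf, or an absorbed sun center), or it lies in the skeleton $T'$ obtained by removing all such gpp-interior vertices. Since each gpp hangs off its anchor, removing only its interior vertices leaves the anchor attached to the rest, so $T'$ is a connected subtree of $T$, containing all gpp anchors together with the remaining non-gpp vertices. Under this setup, a vertex is ``part of a gpp'' in the sense of the lemma exactly when it is \emph{not} in $T'$.

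For the forward direction, I argue that any starlike vertex $u$ must lie in $T'$. Internal path vertices and sun leaves of any gpp have $T$-degree at most $2$, ruling out $u$. A sun center of a gpp $P_q \ast S_r$ with $q \geq 1$ has only $r$ pendant $P_2$-rays besides the path back to its anchor, but $u$, being the anchor of at least two gpps, has at least two non-$P_2$ branches and so cannot play this role. The $P_0 \ast S_r$ sun-center case is handled similarly using the paper's convention that an absorbed sun center has no additional skeleton role. Hence $u \in T'$ and $\deg(u)\geq 3$, witnessing the right-hand side.

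For the backward direction, let $v \in T'$ have degree $\geq 3$. Set $\ell := v$ if $T' = \{v\}$, and otherwise let $\ell$ be any leaf of $T'$. Either way $\ell$ has at most one neighbor in $T'$, so all but at most one of its $T$-neighbors are first vertices of gpps anchored at $\ell$. Let $k$ be the number of such gpps. If $k \geq 2$, then $\ell$ is starlike and we are done. Otherwise $k = 1$; since each gpp $P_q \ast S_r$ with $q \geq 1$ contributes exactly one edge at its anchor, while $\deg_T(\ell) \geq 3$ forces the single gpp to contribute at least two edges at $\ell$, the gpp must be of the form $P_0 \ast S_r$ with $r \geq 2$. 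But then $\ell$ is the sun center of this gpp and has no other skeleton role, so $\ell$ is absorbed into the gpp, contradicting $\ell \in T'$. Therefore $k \geq 2$ and $\ell$ is starlike.

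The main obstacle is the borderline case $q = 0$, $r \geq 2$, where the anchor coincides with the sun center and the classification ``inside the gpp'' versus ``pure anchor'' is subtle. The plan is to rely on the invariant maintained by \texttt{InitiateRepresentation} and \texttt{ReduceStarVertex} that every vertex carries a unique role in the representation, so that a $P_0 \ast S_r$ sun center is treated as inside its gpp exactly when it carries no other skeleton structure---which is precisely the hypothesis needed to derive the contradiction in the $k=1$ case and close the argument.
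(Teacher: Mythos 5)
Your backward direction has a gap at the point where you invoke $\deg_T(\ell)\geq 3$. The hypothesis supplies one vertex $v$ of degree at least $3$ lying in the skeleton $T'$, but the vertex $\ell$ you actually analyse is an \emph{arbitrary} leaf of $T'$, which in general is not $v$, and nothing in the hypothesis bounds $\deg_T(\ell)$ from below. Your exclusion of the case $k=1$ with $q\geq 1$ rests entirely on this degree bound (a single gpp with $q\geq 1$ contributes one edge at $\ell$, and together with the one skeleton neighbour this gives $\deg_T(\ell)=2$, which you need to be impossible), so as written the argument does not close. The claim is in fact true, but it needs its own justification from the structure of the representation rather than from the hypothesis on $v$: a leaf of $T'$ with $\deg_T(\ell)=1$ would be a leaf of $T$ and hence lie inside a gpp; and a leaf of $T'$ with $\deg_T(\ell)=2$ cannot be the anchor of a gpp, since anchors have degree at least $3$ by definition, so its non-skeleton neighbour belongs to a gpp anchored beyond $\ell$, which forces $\ell$ itself into that gpp --- in both cases contradicting $\ell\in T'$. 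With this observation added, your case analysis ($k=0$; $k=1$ with $q=0$; $k=1$ with $q\geq 1$; $k\geq 2$) goes through.

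Once repaired, your route is genuinely different from the paper's. The paper contracts each gpp to a single pendant edge, observes that the resulting tree $\tilde{T}$ still contains a vertex of degree greater than $2$ and hence is not a path, and then cites Lemma~\ref{lemma-starlike} to produce a vertex having two pendant paths, which by the length-one invariant of the representation correspond to two gpps. You argue directly at a leaf of the skeleton instead, avoiding Lemma~\ref{lemma-starlike} entirely, but the price is precisely the degree bookkeeping described above, which the paper's version hides inside its invariant. Both arguments depend on the same convention, which you make more explicit than the paper does, that the sun centre of a $P_0\ast S_r$ gpp counts as part of that gpp.
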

\begin{proof}

The `only if' is trivial. Conversely, assume $T$ has a vertex $v$, $\deg(v)
\geq 3$, not part of a {\gpp}. Note that trees in $(P_q, S_r)$ representation
have squares for leaves, and pendant paths are represented by each {\gpp},
and appear to have length one, although these paths represent a path $P_q$.
Indeed this property exists at initialization and is maintained by
\texttt{ReduceStarVertex}. Consider the tree $\tilde{T}$ of $T$ obtained by
converting the square leaves to normal vertices and removing all labels.
Since $q + r \geq 1$, $\tilde{T}$ is a subtree of $T$, and its pendant paths
of length one correspond to {\gpps} in $T$. This tree also has a vertex of
degree greater than $2$, so it is not a path. By Lemma~\ref{lemma-starlike}, it has
a vertex $u$ having degree at least $3$ with at least two pendant paths. This
must be a starlike vertex in $T$.
\end{proof}

From Lemma~\ref{lem:no-star} if $T$ does {\em not} have a starlike vertex,
then it is a path with a {\gpp} on each end, and we may write
$$
T= u+ P(u)= u + P_{q_1} \ast S_{r_1}\oplus P_{q_2} \ast S_{r_2} .
$$
An illustration of a general form of this tree is given in Figure \ref{fig:k0}.
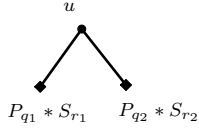
\begin{figure}[!ht]
  \centering
\begin{tikzpicture}[line cap=round,line join=round,>=triangle 45,x=1cm,y=1cm,scale=0.6, every node/.style={scale=0.9}]
\clip(-18.7,-0.2) rectangle (7.7,3);
\draw [line width=1pt] (-4.96,1.99)-- (-5.88,0.71);
\draw [line width=1pt] (-4.96,1.99)-- (-3.98,0.75);
\draw (-5.58,2.8) node[anchor=north west] {{\tiny $u$}};
\draw (-6.82,0.54) node[anchor=north west] {{\tiny $P_{q_1} \ast S_{r_1}$}};
\draw (-4.36,0.56) node[anchor=north west] {{\tiny $P_{q_2} \ast S_{r_2}$}};
\begin{scriptsize}
\draw [fill=black] (-4.96,1.99) circle (2.5pt);
\draw [fill=black] (-5.88,0.71) ++(-3.5pt,0 pt) -- ++(3.5pt,3.5pt)--++(3.5pt,-3.5pt)--++(-3.5pt,-3.5pt)--++(-3.5pt,3.5pt);
\draw [fill=black] (-3.98,0.75) ++(-3.5pt,0 pt) -- ++(3.5pt,3.5pt)--++(3.5pt,-3.5pt)--++(-3.5pt,-3.5pt)--++(-3.5pt,3.5pt);
\end{scriptsize}
\end{tikzpicture} 
\caption{The case $k=0$ starlike vertices.}\label{fig:k0}
\end{figure}
\begin{theorem}
\label{th:path} Let $T$ be a tree of order $n$  having no starlike vertices.
Then $T$ can be
properly transformed into $T_\alpha, ~\alpha \in \{0,1,2,3\}$
according to $n \equiv \alpha \pmod 4$.
\end{theorem}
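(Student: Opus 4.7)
By Lemma~\ref{lem:no-star}, a tree $T$ with no starlike vertex can be written in the form $T = u + P_{q_1}\ast S_{r_1} \oplus P_{q_2}\ast S_{r_2}$ with $\deg u = 2$, as in Figure~\ref{fig:k0}. Since $n = 1 + (q_1 + 2r_1) + (q_2 + 2r_2)$ and every Star-up preserves each individual $q_i + 2r_i$, the parity of $q_1 + q_2$ is a global invariant matching the parity of $q_1^{*} + q_2^{*}$ in the target prototype $T_\alpha$ of Definition~\ref{prot}. This parity alignment is what makes the plan feasible.

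The plan proceeds in two phases. In \emph{Phase~1}, I would apply Proposition~\ref{star-up} (Star-up) alternately to the two generalized pendant paths, always choosing the side whose current $r_i$ is smaller. Alternation keeps $r_1, r_2 \leq \lfloor n/4 \rfloor - 1$ throughout, so the Star-up precondition remains satisfied, while each step converts a length-$2$ piece of the connecting path into an additional ray of the corresponding sun. Phase~1 ends when both $q_i \in \{0,1\}$, leaving only a short connector between the two suns. In \emph{Phase~2}, I would use Proposition~\ref{star-star} to rebalance pendant $P_2$'s across the two sides: case~(a) is a trivial regrouping that delivers $T_1$ for $\alpha = 1$; the splitting clauses of cases~(b) and~(c), combined with at most one additional Star-up, produce the exact $(r_1, r_2)$ prescribed by Theorem~\ref{thr:Talpha} for $\alpha = 0, 2, 3$. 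A final re-rooting of $u$ along the surviving connector---which changes the $(P_q, S_r)$ labelling but not the underlying tree---exhibits the result as the canonical form of $T_\alpha$.

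The main obstacle is a \emph{saturated} initial configuration in which one sun already has more than $\lfloor n/4 \rfloor$ rays, so Phase~1 is blocked from the start. The extreme example is the star $T = S_{2\lfloor n/4\rfloor + 1}$ that arises for $\alpha = 3$. The remedy is a preliminary step: re-root $u$ to the midpoint of one pendant $P_2$, which rewrites $T$ as $u + P_1\ast S_0 \oplus P_1\ast S_{2\lfloor n/4\rfloor}$, and then invoke the splitting branch of Proposition~\ref{star-star}(b). This converts the over-sized sun into two suns of size $\lfloor n/4 \rfloor$ each and restores the hypotheses of the two-phase scheme; one further re-rooting to the midpoint of the new $P_2$ yields the canonical form of $T_3$. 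Analogous preliminary splits, using case~(c) where appropriate, handle the broom-like saturated configurations that appear for $\alpha = 0$ and $\alpha = 2$, after which the two-phase reduction terminates at the prescribed $T_\alpha$.
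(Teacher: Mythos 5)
There is a genuine gap, and it sits exactly where the paper has to do its only non-formal work in this proof. Your remedy for the ``saturated'' configuration --- re-rooting the star with $2\lfloor n/4\rfloor+1$ pendant $P_2$'s to get $u+P_1\ast S_0\oplus P_1\ast S_{2\lfloor n/4\rfloor}$ and then invoking the splitting branch of Proposition~\ref{star-star}(b) --- applies that proposition outside its hypotheses: case (b) explicitly requires $0\leq r_1,r_2\leq\lfloor\frac{n}{4}\rfloor$, and you are feeding it $r_2=2\lfloor\frac{n}{4}\rfloor$. This is not a removable technicality. For $\alpha=3$ the two-phase reduction can terminate at the tree $T_*=v+P_0\ast S_r\oplus P_0\ast S_{r+1}$ (a single vertex with $2r+1$ pendant $P_2$'s), which is a different underlying tree from $T_3=u+P_1\ast S_r\oplus P_1\ast S_r$, and \emph{none} of Star-up, Star-down, Star-star, or re-rooting can convert one into the other: there is no path of length $\geq 2$ to Star-up, no coexisting pendant $P_1$ and $P_2$ at a common vertex to Star-down, and every re-rooting leaves one sun with more than $\lfloor n/4\rfloor$ rays, blocking Star-star. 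The paper closes this hole with a separate ad hoc proper transformation $T_*\to T_3$, justified by running \texttt{Diagonalize} on $T_*$ directly, computing $f_{T_*}(v)$ as an explicit rational function of $r$, verifying it is negative, and counting signs to get $\sigma(T_*)=2r+1=\sigma(T_3)$ (Figure~\ref{T_star_T3}). Your proposal has no substitute for this computation, so the $\alpha=3$ case is not actually reached.

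A secondary, smaller problem: the claim that alternating Star-ups ``keeps $r_1,r_2\leq\lfloor n/4\rfloor-1$ throughout'' is unjustified and false in general, since the two sides can have very unequal weights (e.g.\ a long path with a small decoration at the far end); Star-up on one side does nothing to unblock the other. The paper instead re-roots freely along the connector to shuttle the remaining path length to whichever side still has $r_i<r$, and then accepts that the process may end in the configuration $r_1',r_2'\geq r$ (its Case 1), which it analyzes by a parity and counting argument on $q_1+q_2\leq\alpha-1$ rather than by further transformations. Your parity invariant and the use of the $0$--$1$ Star-star for $\alpha=0,2$ are sound and match the paper; it is the terminal configurations, above all $T_*$, where the proposal does not go through.
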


\begin{proof} By Lemma~\ref{lem:no-star}, $T$ may written as
$T= u+ P(u)= u + P_{q_1} \ast S_{r_1}\oplus P_{q_2} \ast S_{r_2}$.
Assuming that $n  = 4r + \alpha$ and $r= \lfloor \frac{n}{4}
\rfloor$, we divide our proof in two cases.

\noindent{\bf Case 1: $r_1, r_2 \geq r$.}
It follows that $w(P_{q_1} \ast S_{r_1}\oplus P_{q_2} \ast S_{r_2}) + 1 = n = 4r + \alpha$ or
$$
q_1+2r_1+q_2+2r_2+1 = 4r + \alpha .
$$
Using the assumption of Case~1,
we conclude that $q_1+q_2 \leq \alpha -1$.
Let us analyze  the possibilities  of $\alpha$.

Note that $\alpha = 0$ is impossible because $q_1+q_2 \geq 0$.
When $\alpha =1$, we must have $q_1=q_2=0$, and then $T$ is already $T_1$.

Consider next $\alpha =2$. This implies that $q_1+q_2 \leq 1$.
A possible solution is $q_1=q_2=0$.
This is not feasible, as otherwise
$1 + 2r_1+2r_2 = n \equiv 1 \pmod 2$
but $n$ is even when $\alpha = 2$.
The other possible
solution is $q_1=0$ and $q_2=1$, leading to
$  1 + 1 + 2r_1 + 2r_2 = n = 4r +\alpha=4r+2$,
or $2r = r_1 + r_2$.
As $r_1, r_2 \geq r$, we have
$r_1=r_2=r$. We conclude that $T=u+ P_{0} \ast S_{r}\oplus P_{1} \ast S_{r} =
T_2$, as we claimed. We notice that $q_1 =1$ and $q_2 =0$ also leads to $T_2$
by symmetry.

Finally consider
\textbf{$\alpha =3$}.
This implies that $q_1+q_2\leq 2$ and the
possibilities are
\begin{itemize}
\item[(i)] $q_1=q_2 =0$,
\item [(ii)] $q_1=q_2=1$,
\item [(iii)] $q_1=0$ and $q_2=1$,
\item [(iv)] $q_1 = 0$ and $q_2 =2$.
\end{itemize}
Because
$1 + 1 + 2r_1  + 2r_2 = n$
is even
we see that (iii) is not feasible.
If  (ii) happens, we are faced to $1 +
q_1 + 2r_1 + q_2 + 2r_2 = 3 + 2r_1 + 2r_2 = n = 4r+ 3$,
or $r_1+r_2 = 4r$.
And as before, $r_1=r_2=2r$ or
$T= u + P_{1} \ast S_{r}\oplus P_{1} \ast S_{r} = T_3$,
as claimed.
If (iv) happens, a counting argument similar
to (ii) shows that
$T= u + P_{0} \ast S_{r}\oplus P_{2} \ast S_{r}$.
If we root $T$ at the middle vertex of $P_2$ rather than $u$
it has the form $P_3$.

We consider now the case (i)
$q_1=q_2=0$. As $n=4r + 3 = 1 + 2r_1 + 2r_2$, it
implies $r_1 + r_2 = 2r+1$, $r_1=r$ and  $r_2=r+1$ because $r_1, r_2 \geq r$.
We consider the transformation described in Figure~\ref{T_star_T3} which
transforms a tree
$T_{*}=v + P_{0} \ast S_{r} \oplus P_{0} \ast S_{r+1}$ with
$n=4r+3$ vertices into the tree
$T_{3}=u + P_{1} \ast S_{r} \oplus P_{1} \ast S_{r}$.
Using the Diagonalize algorithm in both trees we have the knowledge
of all signs except that of $v$ in $T_{*}$
(we know the signs in $T_{3}$ from Theorem~\ref{thr:Talpha}).\\
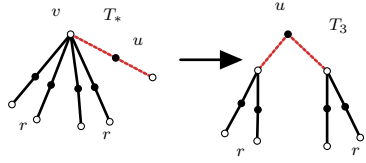
\begin{figure}[!ht]
  \centering
  \definecolor{dtsfsf}{rgb}{0.8274509803921568,0.1843137254901961,0.1843137254901961}
\definecolor{ffffff}{rgb}{1,1,1}
\begin{tikzpicture}[line cap=round,line join=round,>=triangle 45,x=1cm,y=1cm, scale=0.5, every node/.style={scale=0.8}]
\clip(-9.7,-1.5) rectangle (9.7,3);
\draw [line width=1pt] (-0.28,0.14)-- (-0.72,-0.66);
\draw [line width=1pt] (0.16,-0.03)-- (0.14,-0.86);
\draw [line width=1pt,dash pattern=on 1pt off 1pt,color=dtsfsf] (0.96,1.98)-- (2,1);
\draw [line width=1pt] (2,1)-- (2,0);
\draw [line width=1pt] (2,0)-- (2,-1);
\draw [line width=1pt] (2,1)-- (2.48,0.05);
\draw [line width=1pt] (2.48,0.05)-- (2.88,-0.77);
\draw (-3.4,2.12) node[anchor=north west] {{\tiny $u$}};
\draw (0.38,3.04) node[anchor=north west] {{\tiny $u$}};
\draw (1.82,2.62) node[anchor=north west] {{\tiny $T_3$}};
\draw (-0.64,-0.84) node[anchor=north west] {{\tiny $r$}};
\draw (2.42,-0.82) node[anchor=north west] {{\tiny $r$}};
\draw [line width=1pt,dash pattern=on 1pt off 1pt,color=dtsfsf] (0.96,1.98)-- (0.16,1.02);
\draw [line width=1pt] (-0.28,0.14)-- (0.16,1.02);
\draw [line width=1pt] (0.16,1.02)-- (0.16,-0.03);
\draw [line width=1pt] (-5.76,0.85)-- (-6.38,0.11);
\draw [line width=1pt] (-5.34,0.63)-- (-5.72,-0.29);
\draw [line width=1pt] (-4.18,0.57)-- (-3.78,-0.35);
\draw [line width=1pt,dash pattern=on 1pt off 1pt,color=dtsfsf] (-3.62,1.35)-- (-2.64,0.83);
\draw (-4.18,2.9) node[anchor=north west] {{\tiny $T_*$}};
\draw (-6.42,-0.14) node[anchor=north west] {{\tiny $r$}};
\draw (-4.2,-0.38) node[anchor=north west] {{\tiny $r$}};
\draw [line width=1pt] (-4.82,1.98)-- (-5.76,0.85);
\draw [line width=1pt] (-4.82,1.98)-- (-5.34,0.63);
\draw [line width=1pt] (-4.82,1.98)-- (-4.18,0.57);
\draw [line width=1pt,dash pattern=on 1pt off 1pt,color=dtsfsf] (-4.82,1.98)-- (-3.62,1.35);
\draw [line width=1pt] (-4.82,1.98)-- (-4.62,0.53);
\draw [line width=1pt] (-4.62,0.53)-- (-4.54,-0.47);
\draw [->,line width=1pt] (-1.9,1.3) -- (-0.24,1.3);
\draw (-5.56,2.9) node[anchor=north west] {{\tiny $v$}};
\begin{scriptsize}
\draw [fill=black] (0.96,1.98) circle (2.5pt);
\draw [fill=black] (-0.28,0.14) circle (2.5pt);
\draw [fill=ffffff] (-0.72,-0.66) circle (2.5pt);
\draw [fill=black] (0.16,-0.03) circle (2.5pt);
\draw [fill=ffffff] (0.14,-0.86) circle (2.5pt);
\draw [fill=ffffff] (2,1) circle (2.5pt);
\draw [fill=black] (2,0) circle (2.5pt);
\draw [fill=ffffff] (2,-1) circle (2.5pt);
\draw [fill=black] (2.48,0.05) circle (2.5pt);
\draw [fill=ffffff] (2.88,-0.77) circle (2.5pt);
\draw [fill=ffffff] (0.16,1.02) circle (2.5pt);
\draw [fill=ffffff] (-4.82,1.98) circle (2.5pt);
\draw [fill=black] (-5.76,0.85) circle (2.5pt);
\draw [fill=ffffff] (-6.38,0.11) circle (2.5pt);
\draw [fill=black] (-5.34,0.63) circle (2.5pt);
\draw [fill=ffffff] (-5.72,-0.29) circle (2.5pt);
\draw [fill=black] (-4.18,0.57) circle (2.5pt);
\draw [fill=ffffff] (-3.78,-0.35) circle (2.5pt);
\draw [fill=black] (-3.62,1.35) circle (2.5pt);
\draw [fill=ffffff] (-2.64,0.83) circle (2.5pt);
\draw [fill=black] (-4.62,0.53) circle (2.5pt);
\draw [fill=ffffff] (-4.54,-0.47) circle (2.5pt);
\end{scriptsize}
\end{tikzpicture} 
  \caption{On the left the tree $T_{*}$ on the right side  the tree $T_{3}$.}
  \label{T_star_T3}
\end{figure}
Let us call $f_{T_{*}}(v)$ the value obtained by the Diagonalize algorithm
$$f_{T_{*}}(v)=(2r+1) -(2-\frac{2}{n}) -(2r+1) \frac{1}{x_{2}}=-2+\frac{2}{n}+ (2r+1)\left( 1 - \frac{1}{x_{2}}\right)$$
where ${\rm deg}_{T_{*}}(v)=2r+1$.
Analyzing the correspondence $r \to f_{T_{*}}(v)$,
using Maple, we
expressed $x_2$ in terms of $r$,
obtaining rational function
$$
-4\,{\frac {24\,{r}^{2}+22\,r+5}{ \left( 4\,r+3 \right)  \left( 16\,{r
}^{2}+32\,r+11 \right) }}
$$
which is always negative for $r \geq 2$, thus $f_{T_{*}}(v)<0$ and we paint this
vertex as white.\\
Counting the black vertices in $T_{*}$ we see that $\sigma(T_{*})= 2r+1$ and we
already know that $\sigma(T_{3})= 2r+1$. Therefore this transformation is proper.\\

\noindent{\bf Case 2: $r_1 < r$ or $r_2 < r$.}  Let us assume, say, that $r_1
< r$. If $q_1+q_2 \geq 2$ we can rewrite $T$ as
$$
v + P_{q_1+q_2} \ast S_{r_1}\oplus P_{0} \ast S_{r_2} .
$$
Then , using a Star-up Proposition \ref{star-up}, we may transform $T
\to v + P_{q_1+q_2 - 2} \ast S_{r_1+1}\oplus P_{0} \ast S_{r_2}$. More
generally,  performing several Star-up transformations we may assume that $T$
is transformed into
$$
T^\prime = v+ P_{q^\prime_1} \ast S_{r^\prime_1}\oplus P_{q^\prime_2} \ast S_{r^\prime_2},
$$
with either
\begin{itemize}
\item [(a)] $r^\prime_1, r^\prime_2 \geq r$ or
\item [(b)] ~$r^\prime_1 < r$ or $r^\prime_2<r$ and $q^\prime_1+q^\prime_2 \leq 1$.
\end{itemize}
If (a) happens the result follows by the previous Case 1.
We may suppose then $r^\prime_1 < r$ and $q^\prime_1+q^\prime_2 \leq 1$. The possibilities
are the following.

\noindent $ \bullet~ q^\prime_1=q^\prime_2 =0$, which corresponds to
$T^\prime = v+ P_{0} \ast S_{r^\prime_1}\oplus P_{0} \ast S_{r^\prime_2}$ and
$n = 1 + 2r^\prime_1 + 2 r^\prime_2$ is odd.
Thus $n=4r+1$ or
$n=4r+3$, corresponding to $\alpha = 1$ or $\alpha = 3$, respectively.
If $\alpha=1$, then
$4r+1 = 1 + 2r^\prime_1 + 2 r^\prime_2$ which is equivalent to $r^\prime_1 +
r^\prime_2=2r$.
A Star-star (Proposition \ref{star-star})transformation produces the tree $v + P_{0} \ast
S_{r}\oplus P_{0} \ast S_{r} = T_1$.
If $\alpha=3$, then  $4r+3 = 1 +
2r^\prime_1 + 2 r^\prime_2$ which is equivalent to $r^\prime_1 +
r^\prime_2=2r+1$. A Star-star transformation produces a tree $v + P_{0} \ast
S_{r}\oplus P_{0} \ast S_{r+1} = T_*$. As we saw
in Case 1 above,
this tree can be
properly transformed into $T_3$.

\noindent $\bullet~ q^\prime_1=0$ and $q^\prime_2 =1$, which corresponds to
$T^\prime = v+ P_{0} \ast S_{r^\prime_1}\oplus P_{1} \ast S_{r^\prime_2}$ and
so $n = 1 + 2r^\prime_1 + 1+ 2 r^\prime_2= 2+ 2r^\prime_1 + 2 r^\prime_2$
is even,
implying  $n=4r$ or $n=4r+2$, which corresponds to $\alpha =0$ or
$\alpha=2$, respectively.
If $\alpha=0$, then  $4r = 2 + 2r^\prime_1 + 2 r^\prime_2$ which
is equivalent to $r^\prime_1 + r^\prime_2=2r-1$.
Using the $0-1$ Star-star transformation (Proposition \ref{star-star}) we can obtain
$T_0 = v + P_{0} \ast S_{r-1}\oplus P_{1} \ast S_{r}$.
If $\alpha=2$, then  $4r+2 = 1 + 2r^\prime_1 +1+ 2 r^\prime_2$ which is
equivalent to $r^\prime_1 + r^\prime_2=2r$.
Using the $0-1$ Star-star transformation we can obtain
$T_2 = v + P_{0} \ast S_{r}\oplus P_{1} \ast S_{r}$.
\end{proof}

We now turn our attention to a tree $T$ with a single starlike vertex.
By Lemma~\ref{lem:no-star} there is exactly
one vertex of degree $\geq 3$ not in a \gpp.
Let us nominate $u$ to be this starlike vertex, then $T$ has the form
\begin{equation}
\label{eq:twostars}
T=u+P(u)=u+P_{q_1}\ast S_{r_1} \oplus \cdots \oplus P_{q_L}\ast S_{r_L},
\end{equation}
for $L \geq 3$.
An example
is given in Figure
\ref{fig:k1} (left).
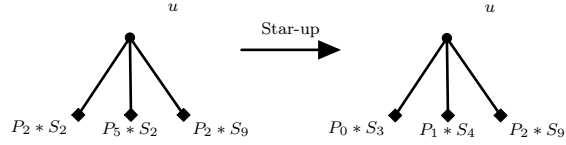
\begin{figure}[!ht]
\centering
\definecolor{aqaqaq}{rgb}{0.6274509803921569,0.6274509803921569,0.6274509803921569}
\begin{tikzpicture}[line cap=round,line join=round,>=triangle 45,x=1cm,y=1cm,scale=0.7, every node/.style={scale=0.8}]
\clip(-14.286619453801121,-0.8) rectangle (10.83924646512042,2.172994011716072);
\draw [line width=1pt] (-6.02,1.47)-- (-7,0);
\draw [line width=1pt] (-6.02,1.47)-- (-5,0);
\draw (-5.468091775382867,2.280649574588768) node[anchor=north west] {{\tiny $u$}};
\draw (-8.444835961429378,0.06624231423709696) node[anchor=north west] {{\tiny $P_{2} \ast S_{2}$}};
\draw (-4.996168916619395,0.029940555870676133) node[anchor=north west] {{\tiny $P_{2} \ast S_{9}$}};
\draw (-6.720502439024387,0.029940555870676133) node[anchor=north west] {{\tiny $P_{5} \ast S_{2}$}};
\draw [line width=1pt] (-6.02,1.47)-- (-6,0);
\draw [line width=1pt] (0,1.46)-- (-0.98,-0.01);
\draw [line width=1pt] (0,1.46)-- (1.02,-0.01);
\draw (0.5580001134429987,2.2624986954055575) node[anchor=north west] {{\tiny $u$}};
\draw (-2.418744072603513,0.04809143505388655) node[anchor=north west] {{\tiny $P_{0} \ast S_{3}$}};
\draw (1.0117720930232597,0.011789676687465715) node[anchor=north west] {{\tiny $P_{2} \ast S_{9}$}};
\draw (-0.6944105501985215,0.011789676687465715) node[anchor=north west] {{\tiny $P_{1} \ast S_{4}$}};
\draw [line width=1pt] (0,1.46)-- (0.02,-0.01);
\draw [->,line width=1pt] (-3.9,1.236) -- (-2.01,1.236);
\draw (-3.689305615428244,1.93) node[anchor=north west] {{\tiny Star-up}};
\begin{scriptsize}
\draw [fill=black] (-6.02,1.47) circle (2.5pt);
\draw [fill=black] (-7,0) ++(-3.5pt,0 pt) -- ++(3.5pt,3.5pt)--++(3.5pt,-3.5pt)--++(-3.5pt,-3.5pt)--++(-3.5pt,3.5pt);
\draw [fill=black] (-5,0) ++(-3.5pt,0 pt) -- ++(3.5pt,3.5pt)--++(3.5pt,-3.5pt)--++(-3.5pt,-3.5pt)--++(-3.5pt,3.5pt);
\draw [fill=black] (-6,0) ++(-3.5pt,0 pt) -- ++(3.5pt,3.5pt)--++(3.5pt,-3.5pt)--++(-3.5pt,-3.5pt)--++(-3.5pt,3.5pt);
\draw [fill=black] (0,1.46) circle (2.5pt);
\draw [fill=black] (-0.98,-0.01) ++(-3.5pt,0 pt) -- ++(3.5pt,3.5pt)--++(3.5pt,-3.5pt)--++(-3.5pt,-3.5pt)--++(-3.5pt,3.5pt);
\draw [fill=black] (1.02,-0.01) ++(-3.5pt,0 pt) -- ++(3.5pt,3.5pt)--++(3.5pt,-3.5pt)--++(-3.5pt,-3.5pt)--++(-3.5pt,3.5pt);
\draw [fill=black] (0.02,-0.01) ++(-3.5pt,0 pt) -- ++(3.5pt,3.5pt)--++(3.5pt,-3.5pt)--++(-3.5pt,-3.5pt)--++(-3.5pt,3.5pt);
\end{scriptsize}
\end{tikzpicture}
\caption{The case $k=1$ starlike vertex. In this example $n=36
\equiv 0 \pmod 4$ and $r=\lfloor\frac{n}{4}\rfloor=8$
therefore we can not perform  Star-up (Proposition \ref{star-up}) on the gpp $P_2\ast S_9$.}
\label{fig:k1}
\end{figure}

\begin{theorem}
\label{th:starlike} Let $T$ be a tree of order $n$  having a single starlike
vertex. Then $T$ can be properly transformed into $T_\alpha, ~\alpha \in
\{0,1,2,3\}$ according to $n \equiv \alpha \pmod 4$.
\end{theorem}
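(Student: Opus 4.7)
The plan is to apply proper transformations at the unique starlike vertex $u$ to gradually reduce the number of gpps attached to $u$ until only two remain; at that point $u$ has degree~$2$, the tree has no starlike vertex, and Theorem~\ref{th:path} produces the prototype $T_\alpha$ corresponding to $n\equiv\alpha\pmod 4$.

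First I would \emph{normalize} each gpp. Apply the Star-up transform (Proposition~\ref{star-up}) repeatedly on every gpp $P_{q_i}\ast S_{r_i}$ with $q_i\geq 2$, reducing $q_i$ to $0$ or $1$ as long as $r_i<\lfloor n/4\rfloor$. A weight-counting argument shows that \emph{at most one} gpp gets stuck with $q_i\geq 2$: any such saturated gpp has weight at least $2+2\lfloor n/4\rfloor$, while the total weight at $u$ is $n-1$, so two saturated gpps would exceed the budget. In particular, at least $L-1\geq 2$ gpps become normalized, and all the pendant $P_2$'s at $u$ can be merged into a single $P_0\ast S_r$ gpp in the canonical $(P_q,S_r)$ representation.

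Next I would \emph{combine} pairs. Apply the Star-star transform (Proposition~\ref{star-star}): case~(b) for two $P_1\ast S$ gpps, case~(c) for a $P_1\ast S$ and a $P_0\ast S$ gpp, and case~(a) to rearrange $P_0\ast S$ gpps when needed. When $r_i+r_j\leq\lfloor n/4\rfloor$ the two combined gpps merge into a single one, and an additional Star-up eliminates any exit $q=2$; hence $L$ strictly decreases. Small residual $P_2$ gpps are absorbed via Star-down (Proposition~\ref{star-down transformation}) into any $P_1\ast S_{r_i}$ with $r_i<\lfloor n/4\rfloor$. The process is iterated, monitored by the lexicographic potential $(L,\text{number of saturated gpps})$, until $L=2$; then Theorem~\ref{th:path} concludes.

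The main obstacle is the \emph{overflow} case of Star-star, where $r_i+r_j>\lfloor n/4\rfloor$: the transformation then produces two gpps rather than one, and $L$ fails to decrease. A weight argument ensures that overflow cannot occur when a saturated gpp is already present (so at most one saturated gpp is ever simultaneously active), but overflow can still arise among normalized gpps with $r_i$ close to $\lfloor n/4\rfloor/2$. I would address overflow configurations by preferring case~(c) over case~(b) (so the new saturated piece carries $q=1$ rather than $q=2$, keeping it amenable to further reduction) and by exhausting, for each $\alpha\in\{0,1,2,3\}$, the bounded list of extremal configurations that survive all generic reductions, handling each through an ad hoc direct sign computation via the \texttt{Diagonalize} algorithm in the spirit of the $T_\ast\to T_3$ transformation appearing in Case~1 of the proof of Theorem~\ref{th:path}.
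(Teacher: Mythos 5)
Your high-level strategy coincides with the paper's: normalize each gpp with Star-up, merge gpps at $u$ pairwise (Star-star / \texttt{ReduceStarVertex}), drive the number of gpps down to two, and finish with Theorem~\ref{th:path}. The opening observations are sound (at most one gpp can be ``saturated'' with $q_i\geq 2$ and $r_i\geq\lfloor n/4\rfloor$, and if one is present the remaining gpps have total weight at most $2\lfloor n/4\rfloor$ and can all be merged at once). But the proof is not complete, because the step you yourself flag as ``the main obstacle'' is exactly where essentially all of the work in the paper's proof lives, and your plan for it does not go through as stated. The configurations that survive your generic reductions are not a bounded list: they are infinite families parametrized by $r=\lfloor n/4\rfloor$ (e.g.\ $u+P_1\ast S_{r_1}\oplus P_1\ast S_{r_2}\oplus P_1\ast S_{r'}$ with $r_1+r_2\geq r+1$ and all $r_i\leq r-1$), so ``an ad hoc direct sign computation'' for each is not a finite check. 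The paper's proof instead shows, by weight-counting modulo $4$ in each case $\alpha\in\{0,1,2,3\}$ and each value of $l_0$ (the number of gpps with $r_i\leq r-1$), that the parameters of every surviving configuration are \emph{forced} (e.g.\ $r_{L-1}=r_L=r$, the $q_i$'s determined up to symmetry by parity of $n$), and then exhibits an explicit chain of Star-down/Star-star moves reaching $T_\alpha$ or the tree $T_*$; only $T_*$ needs a symbolic sign computation, and that was already done in Theorem~\ref{th:path}. Deferring this enumeration is deferring the theorem.

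Two further concrete problems. First, Star-star~(b) and~(c) are only stated for $0\leq r_1,r_2\leq\lfloor n/4\rfloor$, but after your normalization a gpp with $q_i\leq 1$ can still have $r_i$ far exceeding $\lfloor n/4\rfloor$ (take $L=3$ with two gpps equal to $P_1\ast S_0$; the third has weight $n-3$). Your ``combine pairs'' step is not licensed on such a gpp; the paper avoids this by merging all the \emph{other} gpps via \texttt{ReduceStarVertex} (their total weight is $\leq 2r$ by counting) and then handing the resulting two-gpp tree to Theorem~\ref{th:path}, which is where the oversized sun is actually dealt with. Second, your termination measure $(L,\#\text{saturated})$ does not obviously decrease: an overflow application of Star-star keeps $L$ fixed and \emph{creates} a saturated gpp, so the second coordinate increases; you would need a different potential or, as the paper does, an argument that overflow configurations are terminal cases resolved by hand rather than steps of an iteration.
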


\begin{proof}
We write $n=4r+\alpha$. For each gpp $P_{q_i} \ast S_{r_i}$
in (\ref{eq:twostars})
we consider three
possibilities. Either (i) $q_i \geq 2$ and $r_i \leq r-1$ or (ii) $q_i \in
\{0, 1\}$ or (iii) $r_i \geq r$.
If (i) happens, then we can execute some
Star-up (Proposition \ref{star-up}) transformations to shorten the path $P_{q_i}$ as much as possible.

In this proof we will use
\texttt{ReduceStarVertex} to simply combine several gpps into a single gpp,
but we will {\em not} execute the if-statement inside it.

Without loss of generality assume the \gpps\ are ordered in non-decreasing $r_i$.
Since $L \geq 3$ there are three positive indices in $\{L-2, L-1, L\}$.
We claim
that there exists an index
$l_0 \in \{L-2, L-1, L\}$
such that
$ r_i \leq r-1$ if and only if $i \leq l_0$.
Moreover
$q_i \in \{ 0, 1\}$ when $i \leq l_0$.
To see this, we observe that if $r_i \geq r$ for $L-2, L-1$ and
$L$, we would have $n\geq 6r$, a contradiction.
And if for some $i \leq l_0$
$q_i \geq 2$ we could use Star-up transform contradicting the fact that we
have made these transformations as much as possible. Let us analyze  then the
possible values of $l_0$.

\noindent{\bf Case 1: $l_0 = L-2$.} Here $r_{L-1}, r_L \geq r$, meaning that
these two gpps have weight at least $4r$. In fact, we can see that if $r_{L-1},r_{L} \not = r$, then the degree of $v$ could be less than 3, contradicting the fact that it is a starlike vertex.
Taking into account the root $u$,
there are at least $4r+1$ vertices.
This means that $\alpha =0$ is impossible. If $\alpha =1$, we would have
$n=4r + 1$, leaving nothing for the first $L-2$ gpps, an impossibility.

If $\alpha = 2$, we have $n=4r+2$ and there is a single vertex to be distributed to the other gpps.
This implies that
$T=u + P_1 \ast S_0 \oplus P_0 \ast S_r \oplus P_0 \ast S_r$.
Let $v$ be the vertex in $P_1$ having degree one.
By using Star-down (Proposition \ref{star-down transformation})
$r$ times, we can move $r$ of the $P_2$'s to $v$,
and the tree is properly
transformed into $u + P_1\ast S_r \oplus P_0 \ast S_r = T_2$, as we want.

Finally, if $\alpha = 3$, we have $n= 4r + 3$, which gives more freedom for
the choice of the particular tree.
We have
\begin{eqnarray*}
n= 4r + 3 & = & 1 + w(P_{q_1}\ast S_{r_1} \oplus \ldots \oplus
P_{q_{l_0}}\ast S_{r_{l_0}}) + q_{L-1} + 2r_{L-1} + q_{L} + 2r_{L} \\
& \geq & 1 + 1 + q_{L-1} + 2r_{L-1} + q_{L} + 2r_{L} \\
& = & 2 + q_{L-1} + q_{L} + 4r,
\end{eqnarray*}
implying $ 1 \geq q_{L-1}  + q_{L}$.
The possible cases choices for $q_{L-1}$
and $q_L$ are $q_{L-1}=q_{L}=0$, implying
\begin{eqnarray*}
n= 4r + 3 & = & 1 + w(P_{q_1}\ast S_{r_1} \oplus \ldots \oplus P_{q_{l_0}}\ast S_{r_{l_0}}) + 2(r_{L-1} + r_L) \\
& = & 1 + w(P_{q_1}\ast S_{r_1} \oplus \ldots \oplus P_{q_{l_0}}\ast S_{r_{l_0}}) + 4r,
\end{eqnarray*}
meaning that $w(P_{q_1}\ast S_{r_1} \oplus \ldots \oplus P_{q_{l_0}}\ast
S_{r_{l_0}}) = 2$, leading to the following possibilities:
\begin{itemize}
\item[(i)] $P_{q_1}\ast S_{r_1} \oplus \ldots \oplus P_{q_{l_0}}\ast S_{r_{l_0}} = $
$P_0 \ast S_{1}$
\item[(ii)] $P_{q_1}\ast S_{r_1}
\oplus \ldots \oplus P_{q_{l_0}}\ast S_{r_{l_0}} = P_1 \ast S_0 \oplus P_1 \ast S_0$.
\end{itemize}

If Subcase (i)  happens then the result follows from the analysis of the previous case,
namely using $r$ Star-down transformations we can achieve $T_3$.

If case (ii) happens, then $T = u + P_1 \ast S_0
\oplus P_1 \ast S_0 \oplus P_0 \ast S_{r_{L-1}} \oplus P_0 \ast S_{r_L}$ and
by counting the number of vertices
since
$r_{L-1}, r_L \geq r$,
we must have
$r_{L-1} = r_L = r$.
Now, for each pair  $P_1 \ast
S_{0}\oplus P_0 \ast S_{r}$ we apply Star-down transforming into $P_1 \ast
S_{r}$, transforming $T$ into $u + P_1 \ast S_{r} \oplus P_1 \ast S_{r}=T_3$,
as we wish.

If $q_{L-1} =1$ and $q_L=0$,  proceeding as before, we obtain that the only
possibility for  $P_{q_1}\ast S_{r_1} \oplus \ldots \oplus P_{q_{l_0}}\ast
S_{r_{l_0}}$ is  $P_1\ast S_0$.
This means that
$T= u + P_1\ast S_0 \oplus P_0 \ast S_{r_{L-1}} \oplus P_1\ast S_{r_L}$.
As $r_{L-1} + r_{L}= 2r$ and $r_{L-1}, r_L \geq r$ and
we must have $r_{L-1}=r_L =r$.
Apply Star-star (Proposition \ref{star-star}) to the first two summands getting $P_1 * S_r$. Now $T = T_3$.

\noindent{\bf Case 2: $l_0 = L-1$.} Here only $ r_L \geq r$, meaning that the
gpp $P_{q_L} \ast S_{r_L}$ has weight at least $2r$. Taking into account the
root $u$, there are at least $2r+1$ vertices. The whole tree is $T= u +
P_{q_1}\ast S_{r_1} \oplus P_{q_2} \ast S_{r_2} \oplus \cdots \oplus P_{q_{L-1}} \ast S_{r_{L-1}} \oplus P_{q_L}\ast S_{r_L}$ whose weight is $n$. We analyze the possible values of $\alpha$.

If $\alpha =0$, then
\begin{eqnarray*}
n=4r & = & 1 + w(P_1\ast S_{r_1} \oplus P_{q_2} \ast
S_{r_2} \oplus \cdots \oplus P_{q_{L-1}} \ast S_{r_{L-1}}) + w(P_{q_L}\ast S_{r_L}) \\
& \geq & 1+w(P_1\ast S_{r_1} \oplus P_{q_2} \ast S_{r_2} \oplus \cdots \oplus
P_{q_{l_0}} \ast S_{r_{l_0}})+2r,
\end{eqnarray*}
meaning that
$$
w(P_1\ast S_{r_1} \oplus P_{q_2}
\ast S_{r_2} \oplus \cdots \oplus P_{q_{l_0}}\ast S_{r_{l_0}})\leq 2r -1 .
$$
We then may apply \texttt{ReduceStarVertex} transforming it into a single gpp
$P_{q^\prime}\ast S_{r^\prime}$ and
$$
T=u+P_{q^\prime}\ast S_{r^\prime} \oplus
 P_{q_L}\ast S_{r_L}.
$$
Now $T$ has no starlike vertices and then by Theorem
 \ref{th:path} we can transform $T$ into $T_0$.

If $\alpha= 1$, proceeding as in the previous paragraph, we see that
$$
w(P_1\ast S_{r_1} \oplus P_{q_2} \ast S_{r_2} \oplus \cdots \oplus
P_{q_{l_0}}\ast S_{r_{l_0}})\leq 2r.
$$
Hence we can still apply \texttt{ReduceStarVertex},
so that the tree has a single gpp.
Now the tree has no starlike vertices and
by Theorem \ref{th:path}, $T$ is properly transformed into $T_1$.

For $\alpha = 2$, the similar analysis allows one to conclude that
\begin{equation}
\label{Casetwo:inequal}
w(P_{q_1}\ast S_{r_1} \oplus P_{q_2} \ast S_{r_2} \oplus \cdots \oplus
P_{q_{l_0}} \ast S_{r_{l_0}})\leq 2r+1.
\end{equation}
If the left of (\ref{Casetwo:inequal}) is $ \leq 2r$,
\texttt{ReduceStarVertex} can be
applied and, as before, Theorem~\ref{th:path} shows that $T$ is properly transformed into $T_2$.
Therefore we may assume
equality occurs in (\ref{Casetwo:inequal}).
This means that $q_L=0$ and $r_L= r$ and so
$$
T = u + (P_{q_1}\ast S_{r_1} \oplus P_{q_2} \ast S_{r_2} \oplus \cdots
\oplus P_{q_{l_0}}\ast S_{r_{l_0}}) \oplus P_0 \ast S_r .
$$
As $L \geq 3$, we see
that $l_0 \geq 2$. Moreover, there exists at least an index $i \in \{1,
\ldots, l_0\}$ such that $q_i$ is 1 for, otherwise $n = 4r+2$ would be odd.
Consider the \gpps
$$
(P_{q_1}\ast S_{r_1}
\oplus P_{q_2} \ast S_{r_2} \oplus \cdots \oplus P_{q_{l_0}}\ast S_{r_{l_0}})
- P_{q_i}\ast S_{r_i}.
$$
Since their weight $ \leq 2r$,
we can apply \texttt{ReduceStarVertex}
transforming $T$ into
$$
u + P_{q_i} \ast S_{r_i} \oplus P_{q^\prime} \ast S_{r^\prime} \oplus P_0 \ast S_r.
$$

As $w(P_{q_i} \ast S_{r_i} \oplus P_{q^\prime} \ast S_{r^\prime}) = 2r +1$,
it follows that
$r_i +  r^\prime = r$.
Since $q_i = 1$, by parity we must have $q'=0$. Therefore we
apply Star-star transformations (Proposition \ref{star-star}) to convert $P_1 \ast S_{r_i} \oplus P_{0} \ast S_{r^\prime}$ into
$P_1 \ast S_r$ reducing $T$ to $ u + P_1 \ast S_r \oplus P_0 \ast S_r = T_2$.

For $\alpha =3$, we have
$n= 4r+3   $
\begin{eqnarray*}
& = & 1+
w(P_{q_1}\ast S_{r_1} \oplus P_{q_2} \ast S_{r_2} \oplus \cdots \oplus P_{q_{l_0}} \ast S_{r_{l_0}})   +q_L + 2r_L \\
& \geq &
w(P_{q_1}\ast S_{r_1} \oplus P_{q_2} \ast S_{r_2} \oplus \cdots \oplus P_{q_{l_0}} \ast S_{r_{l_0}}) +  2r +1,
\end{eqnarray*}
and, consequently,
$$
w(P_{q_1}\ast S_{r_1} \oplus P_{q_2} \ast S_{r_2}
\oplus \cdots \oplus P_{q_{l_0}} \ast S_{r_{l_0}})\leq 2r +2.
$$
Now, we look at
$
P_{q_1}\ast S_{r_1} \oplus P_{q_2} \ast S_{r_2} \oplus \cdots \oplus P_{q_{l_0}} \ast S_{r_{l_0}}$
and claim that there exists at least one gpp
$ P_{q_j} \ast S_{r_j}$ with $r_j \geq 1$ for, otherwise all the gpps
have the form $P_1 \ast S_0$ and then we can take two of them and transform into $P_0
\ast S_1$, by Star-star transformation.
Consider the tree
 $$
  u +
P_{q_1}\ast
 S_{r_1} \oplus P_{q_2} \ast S_{r_2} \oplus \cdots \oplus P_{q_{j-1}} \ast
 S_{r_{j-1}} \oplus P_{q_{j+1}} \ast S_{r_{j+1}}\oplus \cdots \oplus
 P_{q_{l_0}} \ast S_{r_{l_0}}.
 $$
Its weight of the last tree is $\leq 2r$. Hence we
can apply \texttt{ReduceStarVertex} transforming it into a single gpp
$P_{q^{\prime}} \ast S_{r^{\prime}}$ and
$T = u + P_{q_{j}} \ast S_{r_{j}}\oplus P_{q^{\prime}} \ast S_{r^{\prime}} \oplus P_{q_L} \ast S_{r_L}$.

If $q_L \geq 2$, it is easy to show that $w(P_{q_{j}} \ast S_{r_{j}}\oplus
P_{q^{\prime}} \ast S_{r^{\prime}}) \leq 2r$ and then, by applying
\texttt{ReduceStarVertex}, we transform it into a gpp and
$T= u + P_{q^{\prime\prime}} \ast S_{r^{\prime\prime}} \oplus P_{q_L} \ast S_{r_L}$,
which by Theorem \ref{th:path} may be transformed into $T_3$.

If $q_L=1$ then
$w(P_{q_{j}} \ast S_{r_{j}}\oplus P_{q^{\prime}} \ast S_{r^{\prime}}) \leq
2r+1$ and, as before, we only need to consider
when
$w(P_{q_{j}} \ast S_{r_{j}}\oplus P_{q^{\prime}} \ast S_{r^{\prime}}) = 2r+1$.
From this we
conclude that $r_L = r$ and
$T = u + P_{q_{j}} \ast S_{r_{j}}\oplus P_{q^{\prime}} \ast S_{r^{\prime}} \oplus P_{1} \ast S_{r}$.
Since
$q_j + 2r_j+ q^\prime + 2r^\prime = 2r +1$,
and
$q_j, q^\prime \in \{0,1\}$,
by parity
we conclude that $q_j + q^\prime =1$
and so $r_j + r^\prime =r$.
Thus we can apply Star-star (Proposition \ref{star-star})
transforming $ P_{q_{j}} \ast S_{r_{j}}\oplus P_{q^{\prime}} \ast S_{r^{\prime}}$
into $P_{1} \ast S_{r}$ and $T = u + P_{1} \ast S_{r}\oplus
P_{1} \ast S_{r} = T_3$.

If $q_L = 0$, then
$T = u + P_{q_{j}} \ast S_{r_{j}}\oplus P_{q^{\prime}} \ast S_{r^{\prime}} \oplus P_{0} \ast S_{r_L}$,
where $1 \leq r_j \leq r-1$, $r_L \geq r$, and
$q_j, q^\prime \in \{0,1\}$.
We transform this into
$u + P_{q_{j}} \ast S_{r }\oplus P_{q^{\prime}} \ast S_{r^{\prime}} \oplus P_{0} \ast S_{ r_L - (r-r_j) }$,
using Star-star.
Since $n$ is odd, either $q_j = q' = 0$  or
$q_j = q' = 1$.
In the first case $T = u + P_{0} \ast S_{r + r' + r_L - (r - r_j) } =  u + P_0 \ast  S_{r_j + r' + r_L  }$.
From $n = 4r + 3 = 1 + 2(r_j + r' + r_L)$, we deduce that  $r_j + r' + r_L = 2r + 1$,
so this is the tree $T_{*}$
in Figure~\ref{T_star_T3} that was transformed into $T_{3}$.
If $q_j = q' = 1$, the summands are
$P_{1} \ast S_{r }\oplus P_{1} \ast S_{r^{\prime}} \oplus P_{0} \ast S_{ r_L - (r-r_j) }$.
It is easy to see that
$r = r^{\prime} + r_L - (r-r_j)$, so
$P_{1} \ast S_{r^{\prime}} \oplus P_{0} \ast S_{ r_L - (r-r_j) } \Rightarrow P_1 \ast S_r$, obtaining $T_3$.

\noindent{\bf Case 3: $l_0 = L$.} In this case we have
$$
 T=u+ P(u)=u +P_{q_1}
\ast S_{r_1} \oplus \cdots \oplus P_{q_L}\ast S_{r_L},
$$
with
\begin{equation}
\label{condtionsCasethree}
q_i \in \{0, 1\} \mbox{ and } r_i \leq r-1, \mbox{ for all  }  i =1,\ldots, L .
\end{equation}
We claim that any tree having $L \geq 3$ gpps
and satisfying the conditions in
(\ref{condtionsCasethree})
can be reduced to a
prototype tree.

We may assume that $r_1
\leq r_2 \leq \cdots \leq r_L$. Consider the weight $w$ of $P_{q_1}\ast
S_{r_1} \oplus P_{q_2}\ast S_{r_2}$. If $w \leq 2r$, we can use
\texttt{ReduceStarVertex} and the 2 gpps into a single one and transforming
the original tree into $ T=u+P(u)=u+P_{q_1}\ast S_{r_1} \oplus \cdots \oplus
P_{q_{L-1}}\ast S_{r_{L-1}}$ having $L-1$ gpps. If $L-1 = 2$, then the tree
has no starlike vertices and the result follows from Theorem \ref{th:path}.
If there remains any pair of gpps whose weight is bounded by $2r$, this
argument can be repeated. Hence we can assume that the tree is of form
$T=u+P(u)=u+P_{q_1} \ast S_{r_1} \oplus \cdots \oplus P_{q_L}\ast S_{r_L}$,
with $L \geq 3, q_i \in \{0, 1\}$ and $w(P_{q_1}\ast S_{r_1} \oplus
P_{q_2}\ast S_{r_2}) \geq 2r +1$.
Since $r_1 + r_2$ is integer, it follows that $r_1 + r_2 \geq r$.
We also exploit the fact that $r_i \leq r - 1$.
We consider the possible values for
$\alpha$.\\

$\bullet~\alpha=0$. As $n = 4r = 1 +w(P_{q_1}\ast S_{r_1} \oplus P_{q_2}\ast
S_{r_2} ) + w(P_{q_3} \ast S_{r_3} \oplus \cdots \oplus P_{q_L}\ast S_{r_L})
\geq 1 + 2r+1 + w(P_{q_3} \ast S_{r_3} \oplus \cdots \oplus P_{q_L}\ast
S_{r_L})$, implying that $w(P_{q_3} \ast S_{r_3} \oplus \cdots \oplus
P_{q_L}\ast S_{r_L}) \leq 2r-2$, which means we can use the
\texttt{ReduceStarVertex} procedure to reduce it to a single gpp $
P_{q^\prime} \ast S_{r^{\prime}}$ and $T$ becomes $T=u+P_{q_1} \ast S_{r_1}
\oplus P_{q_2}\ast S_{r_2} \oplus  P_{q^\prime}\ast S_{r^\prime}$, with
$r^\prime \leq r-1, q^\prime + 2r^\prime \leq 2r -2$, and $q^\prime \in \{0,
1\}$. As $n=4r=1+q_1+q_2+q^\prime + 2r_1 + 2r_2 + 2r^\prime$, we see that
either (i) only $q_1=1$ or only $q_2=1$ or only $q^\prime =1$ or (ii)
$q_1=q_2=q^\prime = 1$. If (say) $q_2=1$ and $q_1=q^\prime =0$, we see that
$T$ has the form $T=u+P(u)=u+P_{0} \ast S_{r_1} \oplus P_{1}\ast S_{r_2}
\oplus P_{0}\ast S_{r^\prime}$. By star-down transform (Proposition \ref{star-down transformation}) we reduce it to
$u+P_{0} \ast S_{r_1-t} \oplus P_{1}\ast S_{r_2+t} \oplus  P_{0}\ast
S_{r^\prime}$, with $r_2+t=r$. If $r_1-t \geq 1$, we can use star-down
transform to pass the remaining $r_1 -t$ paths $P_2$ to $P_0 \ast
S_{r^\prime}$ to produce $u+P(u)=u+P_{1} \ast S_{r} \oplus  P_{0}\ast
S_{r^\prime+r_1-t}$, which has no starlike vertices and the result follows
from Theorem~\ref{th:path}. The case $q_1=1$ with $q_2=q^\prime=0$ and the
case $q^\prime =1$ with $q_1=0$ and $q_2=0$ are similar.

It remains to
analyse when $q_1=q_2=q^\prime=1$. This means that $T=u+P(u)=u+P_{1} \ast
S_{r_1} \oplus P_{1}\ast S_{r_2} \oplus P_{1}\ast S_{r^\prime}$. Since $r_1,
r_2 \leq r-1$, we use star-star transform to reduce  $T$ to $u+P(u)=u+P_{0}
\ast S_{r_1+r_2-r} \oplus P_{2}\ast S_{r} \oplus  P_{1}\ast S_{r^\prime}$.
Since $2(r_1 + r_2 - r + r^\prime) + 2r + 1 + 2 + 1 = n = 4r$,
we have $r_1+r_2-r + r^\prime < r$.

Therefore
we can use star-star to transform
$T$ into $u+P(u)=u+P_{1} \ast S_{r_1+r_2-r+r^\prime} \oplus P_{2}\ast S_{r}$,
which has no starlike vertices and then we can invoke Theorem \ref{th:path}
to claim the result.\\

$\bullet~\alpha=1$. As $n = 4r +1= 1 + w(P_{q_1}\ast S_{r_1} \oplus
P_{q_2}\ast S_{r_2}) + w(P_{q_3} \ast S_{r_3} \oplus \cdots \oplus
P_{q_L}\ast S_{r_L}) \geq 1 + 2r+1 + w(P_{q_3} \ast S_{r_3} \oplus \cdots
\oplus P_{q_L}\ast S_{r_L})$, implying that $w(P_{q_3} \ast S_{r_3} \oplus
\cdots \oplus P_{q_L}\ast S_{r_L}) \leq 2r-1$, which means we can use the
\texttt{ReduceStarVertex} procedure to reduce it to a single gpp $
P_{q^\prime} \ast S_{r^\prime}$ and since its weight is $\leq 2r$, we know
that $q^\prime \in \{0, 1\}$, $r^\prime \leq r-1$ and $T$ is transformed into
$T=P(u)=u+P_{q_1} \ast S_{r_1} \oplus P_{q_2}\ast S_{r_2} \oplus P_{q^\prime}
\ast S_{r^\prime}$. As $n=4r +1=1+q_1+q_2+q^\prime + 2r_1 + 2r_2 +
2r^\prime$ is odd, we see that either (i) $q_1=q_2=q^\prime = 0$ or (ii) two
$q$'s equal 1 and  the other 0. If $q_1=q_2=q^\prime = 0$, we see that the
tree has the form $T=P(u)=u+P_{0} \ast S_{r_1} \oplus P_{0}\ast S_{r_2}
\oplus P_{0}\ast S_{r^\prime}$. Since $n=4r+1= 2(r_1 + r_2 + r^\prime) +1$,
we use star-star (Proposition \ref{star-star}) to transform the tree into  $T=P(u)=u+P_{0} \ast S_{r}
\oplus P_{0}\ast S_{r}=T_1$, as claimed.

It remains to analyze when two $q$'s
are 1 and the other is zero, say $q_1=0$ and $q_2=q^\prime=1$.
This means that $T=P(u)=u+P_{0} \ast S_{r_1} \oplus P_{1}\ast S_{r_2} \oplus P_{1}\ast
S_{r^\prime}$.
Since $r_1 + r_2 \geq r$
and $r_1, r_2 \leq r-1$ we can use star-star transform
reducing it to $T=P(u)=u+P_{0} \ast S_{r_1+r_2-r} \oplus P_{1}\ast S_{r}
\oplus  P_{1}\ast S_{r^\prime}$. Now, since $n=4r+1= 1+ 2(r_1+r_2-r)+1 + 2r+
1+2r^\prime$, we conclude that $r-1 = (r_1+r_2-r)+r^\prime$, and we can use
star-star to transform $T$ into $T=P(u)=u+P_{1} \ast S_{r-1} \oplus P_{1}\ast
S_{r}$, which has no starlike vertices and then we can invoke Theorem
\ref{th:path} to claim the result. The other possible values of $q$ are
similar, concluding the case $\alpha=1$.\\

$\bullet~ \alpha=2$. As $n = 4r+2 = 1 + w(P_{q_1}\ast S_{r_1} \oplus
P_{q_2}\ast S_{r_2}) + w(P_{q_3} \ast S_{r_3} \oplus \cdots \oplus
P_{q_L}\ast S_{r_L}) \geq 1 + 2r+1 + w(P_{q_3} \ast S_{r_3} \oplus \cdots
\oplus P_{q_L}\ast S_{r_L})$, implying that $w(P_{q_3} \ast S_{r_3} \oplus
\cdots \oplus P_{q_L}\ast S_{r_L})\leq 2r$, which means we can use the
\texttt{ReduceStarVertex} procedure to reduce it to a single gpp $
P_{q^\prime} \ast S_{r^\prime}$ and since its weight is $\leq 2r$, we may
assume that $q^\prime \in \{0, 1\}$ and $T$ is transformed into
$u+P(u)=u+P_{q_1} \ast S_{r_1} \oplus P_{q_2}\ast S_{r_2} \oplus P_{q^\prime}
\ast S_{r^\prime}$. Because $n=4r +2=1+q_1+q_2+q^\prime + 2r_1 + 2r_2 +
2r^\prime$ is even, we see that either (i) One of the $q$'s is 1 the other
two are 0 or (ii) $q_1=q_2=q^\prime = 1$. Consider when (i) occurs, say
$q_1=1$ and $q_2=q^\prime=0$ (the other possibilities are similar). We see
that the tree has the form $T=u+P(u)=u+P_{1} \ast S_{r_1} \oplus P_{0}\ast
S_{r_2} \oplus P_{0}\ast S_{r^\prime}$. Since $1+ 2(r_1 + r_2)\geq 2r+1$,
we conclude that $r_1+r_2 \geq r$ but $r_1,r_2 \leq r$, then by star-star
transform we reduce $T$ to $u+P(u)=u+P_{0} \ast S_{r_1+r_2-r} \oplus
P_{1}\ast S_{r} \oplus  P_{0}\ast S_{r^\prime}$. We can use star-star to
transform $T$ into $u+P(u)=u+P_{0} \ast S_{r_1+r_2-r+r^\prime} \oplus
P_{1}\ast S_{r}$. Now, by counting, $2r = 2 (r_1+r_2-r + r^\prime)$, we see
that $r_1+r_2-r + r^\prime=r$ and  $T=T_2$.

If (ii) occurs, then
$T=u+P(u)=u+P_{1} \ast S_{r_1} \oplus P_{1} \ast S_{r_2} \oplus P_{1}\ast S_{r^\prime}$. If $r_1 + r_2 = r$ then using star-star
we obtain
$T= u + P_{2} \ast S_{r} \oplus P_{1}\ast S_{r^\prime}$,
which has no starlike vertices and we are through.
Otherwise $r_1+r_2 \geq r+1$,
and we first use
star-star to transform $T$ into $u+P(u)=u+P_{2} \ast S_{r} \oplus P_{0}\ast
S_{r_1+r_2-r} \oplus P_{1}\ast S_{r^\prime}$. By counting, we see that $r-1 =
r_1+r_2-r+r^\prime$, and we use again a star-star to transform $T$ into
$u+P(u)=u+P_{2} \ast S_{r} \oplus P_{1}\ast S_{r-1}$. We then change the root
and see $T$ as $T=v+P(v)=v +P_{0} \ast S_{r} \oplus P_{3}\ast S_{r-1}$, and
by executing a star-up (Proposition \ref{star-up}), $T$ is transformed into $v+P(v)=v +P_{0} \ast S_{r}
\oplus P_{1}\ast S_{r}=T_2$, as we claimed.\\

$\bullet~\alpha=3$. As $n = 4r+3 = 1 + w(P_{q_1}\ast S_{r_1} \oplus
P_{q_2}\ast S_{r_2}) + w(P_{q_3} \ast S_{r_3} \oplus \cdots \oplus
P_{q_L}\ast S_{r_L}) \geq 1 + 2r+1 + w(P_{q_3} \ast S_{r_3} \oplus \cdots
\oplus P_{q_L}\ast S_{r_L})$, we conclude that $w(P_{q_3} \ast S_{r_3} \oplus
\cdots \oplus P_{q_L}\ast S_{r_L})\leq 2r+1$.

We consider two cases.
If $L=3$ (only 3 gpp's), by assumption $r_3 \leq r-1$ and $q_3 \in \{0, 1\}$.
We now consider the two possibilities for $q_3$.
In case $q_3=0$ we have
$T=u + P_{q_1} \ast S_{r_1} \oplus P_{q_2} \ast S_{r_2} \oplus P_{0}\ast S_{r_3}$.
Counting the number of vertices, we have $4r+3 =1+ q_1+ q_2 + 2r_1+2r_2+ 2r_3$ or
$4r+2 = q_1+ q_2 + 2r_1+2r_2+
2r_3$, meaning that $q_1+ q_2$ is even.
If
$q_1= q_2=0$ then
$T=u + P_{0} \ast S_{r_1} \oplus P_{0} \ast S_{r_2} \oplus P_{0}\ast S_{r_3}$,
using star-star $T$ can be written as
$T=u + P_{0} \ast S_{r} \oplus P_{0} \ast S_{r+1}$.
This has no starlike vertices and we are done.
On the other hand, assume $q_1= q_2=1$,
and so $T=u + P_{1} \ast S_{r_1} \oplus P_{1} \ast S_{r_2} \oplus P_{0}\ast S_{r_3}$.
We distribute the $r_3$ $P_2$ paths as follows. Let $t=r-r_1$ and $s=r-r_2$. Then, using star-star, write this as $T=u + P_{1} \ast S_{r_1 + t} \oplus P_{1} \ast S_{r_2} \oplus P_{0}\ast S_{r_3 -t}$,
and then
$T=u + P_{1} \ast S_{r_1 + t} \oplus P_{1} \ast S_{r_2 + s} \oplus P_{0}\ast S_{r_3 -t -s}$.
Since $r_1 + r_2 + r_3 = 2r$, this last tree must be
$u + P_{1} \ast S_{r} \oplus P_{1} \ast S_{r} \oplus P_{0}\ast S_{0}$, or $T_3 = u + P_{1} \ast S_{r} \oplus P_{1} \ast S_{r}$.
We can handle the case $q_3=1$ in the same way
because, in this case, $4r+3 =1+ q_1+ q_2 + 1 + 2r_1+2r_2+ 2r_3$ or $4r+1 =
q_1+ q_2  + 2r_1+2r_2+ 2r_3$ then one should consider $q_1=0, q_2=1$, which
is a symmetric possibility.

If $L \geq 4$ then consider the ggp $P_{r_3} \ast S_{r_3}$. Since its weight is at least 1 (corresponding to $P_1 \ast S_0$)
we have $w(P_{q_4} \ast S_{r_4} \oplus \cdots \oplus P_{q_L}\ast S_{r_L})\leq
2r$.
We apply \texttt{ReduceStarVertex} procedure to reduce it to a single
gpp $ P_{q^\prime} \ast S_{r^\prime}$ with $q^\prime \in \{0, 1\}$ and
$r^\prime \leq r-1$.
Thus $T$ is transformed into $u+ P(u)=u+P_{q_1} \ast
S_{r_1} \oplus P_{q_2}\ast S_{r_2} \oplus  P_{q_3}\ast S_{r_3} \oplus
P_{q^\prime} \ast S_{r^\prime}$.

There are four gpps and we know
$w(P_{q_1} \ast S_{r_1} \oplus P_{q_2}\ast S_{r_2}) \geq 2r + 1$.
If $w(P_{q_3}\ast S_{r_3} \oplus P_{q^\prime} \ast S_{r^\prime}) \leq 2r$
we could reduce this pair and we would be back to $L = 3$,
so assume $w(P_{q_3}\ast S_{r_3} \oplus P_{q^\prime} \ast S_{r^\prime}) \geq 2r+1$.
Therefore
$n=4r +3=1+ w(P_{q_1} \ast S_{r_1} \oplus P_{q_2}\ast S_{r_2}) +
 w(P_{q_3}\ast S_{r_3} \oplus P_{q^\prime} \ast S_{r^\prime})
 \geq 1 +2r+1+2r+1 = n$.  We obtain
 $w(P_{q_1} \ast S_{r_1} \oplus P_{q_2}\ast S_{r_2}) + w(P_{q_3}\ast S_{r_3} \oplus
 P_{q^\prime} \ast S_{r^\prime})=  4r+2$.
 As $w(P_{q_1} \ast S_{r_1} \oplus P_{q_2}\ast S_{r_2})\geq 2r +1$ and
 $w(P_{q_3}\ast S_{r_3} \oplus P_{q^\prime} \ast S_{r^\prime}) \geq 2r +1$, we must have
 equality, that is,
 $w(P_{q_1} \ast S_{r_1} \oplus P_{q_2} \ast S_{r_2})= 2r +1 $
 and $w(P_{q_3} \ast S_{r_3} \oplus P_{q^\prime} \ast S_{r^\prime}) = 2r +1$.

The possible values for the $q$'s are the following. Either (i) $q_1=1$, $q_2 =0$ combined with $q_3=1$, $q^\prime =0$ or (ii)  $q_1=0$, $q_2 =1$ combined with $q_3=1$, $q^\prime =0$
or (iii)    $q_1=1$, $q_2 =0$ combined with $q_3=0$, $q^\prime =1$
or (iv) $q_1=0$, $q_2 =1$ combined with $q_3=0$, $q^\prime =1$.
We consider the case (ii) $q_1=q^\prime=0$ with $q_2=q_3= 1$ (the other possibilities are similar). We see that the tree has
the form $T=u+P(u)=u+(P_{0} \ast S_{r_1} \oplus P_{1}\ast S_{r_2}) \oplus
(P_{1}\ast S_{r_3} \oplus P_{0}\ast S_{r^\prime})$, with $r_1+r_2= r$,
$r_3+r^\prime = r$ but $r_1,r_2, r_3, r^\prime \leq r-1$.

We apply star-star
within each pair
reducing $T$ to $u+P_{0} \ast S_{r_1+r_2-r} \oplus P_{1}\ast S_{r}
\oplus P_{0} \ast S_{r_3+r^\prime-r} \oplus P_{1}\ast S_{r}$, but since
$r_1 + r_2 - r = 0$ and $r_3 + r^\prime - r =0$, the proof is complete.

\end{proof}

\section{A complete example}
\label{sec:exa}

Consider the tree $T$ given by Figure~\ref{fig:ex3:1}. This tree has $n=53$
vertices. Since $53 \equiv 1 \pmod 4$ we are going to properly transform it into
the prototype tree $T_1$. We notice that, in this case, $r=
\lfloor\frac{n}{4}\rfloor=13$. Thus we can always apply
\texttt{ReduceStarVertex}$(T, u)$ provided that $w(u) \leq 26$.
\begin{figure}[!ht]
  \centering
\begin{tikzpicture}[line cap=round,line join=round,>=triangle 45,x=1cm,y=1cm,scale=0.6, every node/.style={scale=0.8}]
\clip(-4.9,-3.8) rectangle (6.5,3.44);
\draw [line width=1pt] (-4,0)-- (-4.44,-1.01);
\draw [line width=1pt] (-4,0)-- (-3.46,-1.03);
\draw (-4.26,0.8) node[anchor=north west] {{\tiny $u_{4}$}};
\draw (-4.96,-1.12) node[anchor=north west] {{\tiny $3 \times P_0 \ast S_{1}$}};
\draw (-3.3,1.96) node[anchor=north west] {{\tiny $P_7 \ast S_{1}$}};
\draw [line width=1pt] (-4,0)-- (-3,0);
\draw [line width=1pt] (2,0)-- (3,0);
\draw [line width=1pt] (-1,0)-- (-1,-1);
\draw [line width=1pt] (-1,-1)-- (-1.7085590969545494,-1.995084663720346);
\draw [line width=1pt] (-1,-2.02)-- (-1.44,-3.03);
\draw [line width=1pt] (-1,-2.02)-- (-0.46,-3.05);
\draw [line width=1pt] (-1,1)-- (-1.42,1.99);
\draw [line width=1pt] (-1,1)-- (-0.46,2.01);
\draw [line width=1pt] (4,-1)-- (3.5301160671617895,-1.9877781701461203);
\draw [line width=1pt] (4,-1)-- (4.539533675750619,-1.9877781701461203);
\draw [line width=1pt] (1,0)-- (1,1);
\draw [line width=1pt] (-0.02,-0.01)-- (-0.02,-1.01);
\draw [line width=1pt] (4,0)-- (5,0);
\draw [line width=1pt] (-1.98,0)-- (-1.98,1);
\draw [line width=1pt] (-3,0)-- (-1.98,0);
\draw [line width=1pt] (-1.98,0)-- (-1,0);
\draw [line width=1pt] (-1,0)-- (-0.02,-0.01);
\draw [line width=1pt] (-1,-1)-- (-1,-2.02);
\draw [line width=1pt] (-1,0)-- (-1,1);
\draw [line width=1pt] (-0.02,-0.01)-- (1,0);
\draw [line width=1pt] (1,0)-- (2,0);
\draw [line width=1pt] (4.014083413745475,0.9782502824881779)-- (3.779013559690542,2.008409348787737);
\draw [line width=1pt] (4.014083413745475,0.9782502824881779)-- (4.249153267800409,2.008409348787737);
\draw [line width=1pt] (4,0)-- (4.014083413745475,0.9782502824881779);
\draw [line width=1pt] (4,0)-- (3,0);
\draw [line width=1pt] (4,-1)-- (4,0);
\draw [line width=1pt] (3.3503567670021344,1.9945817103139172)-- (4.014083413745475,0.9782502824881779);
\draw [line width=1pt] (4.014083413745475,0.9782502824881779)-- (4.719292975910273,1.9945817103139172);
\draw [line width=1pt] (-4,0)-- (-4,-1);
\draw (3.22,3.14) node[anchor=north west] {{\tiny $4 \times P_0 \ast S_{1}$}};
\draw (-3.04,-1.94) node[anchor=north west] {{\tiny $P_1 \ast S_{0}$}};
\draw (-0.46,3.06) node[anchor=north west] {{\tiny $P_1 \ast S_{0}$}};
\draw (-2.44,3.04) node[anchor=north west] {{\tiny $P_0 \ast S_{1}$}};
\draw (3.3,-2.02) node[anchor=north west] {{\tiny $2 \times P_0 \ast S_{1}$}};
\draw (4.64,0.02) node[anchor=north west] {{\tiny $P_0 \ast S_{1}$}};
\draw (1.26,1.8) node[anchor=north west] {{\tiny $P_1 \ast S_{0}$}};
\draw (0.02,-0.88) node[anchor=north west] {{\tiny $P_1 \ast S_{1}$}};
\draw (-1.76,-3.08) node[anchor=north west] {{\tiny $2 \times P_1 \ast S_{0}$}};
\draw (-0.84,1.48) node[anchor=north west] {{\tiny $u_{2}$}};
\draw (-0.8,-1.48) node[anchor=north west] {{\tiny $u_{1}$}};
\draw (4.28,1.3) node[anchor=north west] {{\tiny $u_{5}$}};
\draw (3.38,-0.46) node[anchor=north west] {{\tiny $u_{3}$}};
\begin{scriptsize}
\draw [fill=black] (-4,0) circle (2.5pt);
\draw [fill=black] (-4.44,-1.01) ++(-3.5pt,0 pt) -- ++(3.5pt,3.5pt)--++(3.5pt,-3.5pt)--++(-3.5pt,-3.5pt)--++(-3.5pt,3.5pt);
\draw [fill=black] (-3.46,-1.03) ++(-3.5pt,0 pt) -- ++(3.5pt,3.5pt)--++(3.5pt,-3.5pt)--++(-3.5pt,-3.5pt)--++(-3.5pt,3.5pt);
\draw [fill=black] (-3,0) circle (2.5pt);
\draw [fill=black] (-1,0) circle (2.5pt);
\draw [fill=black] (2,0) circle (2.5pt);
\draw [fill=black] (3,0) circle (2.5pt);
\draw [fill=black] (-1,-1) circle (2.5pt);
\draw [fill=black] (-1.7085590969545494,-1.995084663720346) ++(-3.5pt,0 pt) -- ++(3.5pt,3.5pt)--++(3.5pt,-3.5pt)--++(-3.5pt,-3.5pt)--++(-3.5pt,3.5pt);
\draw [fill=black] (-1,-2.02) circle (2.5pt);
\draw [fill=black] (-1.44,-3.03) ++(-3.5pt,0 pt) -- ++(3.5pt,3.5pt)--++(3.5pt,-3.5pt)--++(-3.5pt,-3.5pt)--++(-3.5pt,3.5pt);
\draw [fill=black] (-0.46,-3.05) ++(-3.5pt,0 pt) -- ++(3.5pt,3.5pt)--++(3.5pt,-3.5pt)--++(-3.5pt,-3.5pt)--++(-3.5pt,3.5pt);
\draw [fill=black] (-1,1) circle (2.5pt);
\draw [fill=black] (-1.42,1.99) ++(-3.5pt,0 pt) -- ++(3.5pt,3.5pt)--++(3.5pt,-3.5pt)--++(-3.5pt,-3.5pt)--++(-3.5pt,3.5pt);
\draw [fill=black] (-0.46,2.01) ++(-3.5pt,0 pt) -- ++(3.5pt,3.5pt)--++(3.5pt,-3.5pt)--++(-3.5pt,-3.5pt)--++(-3.5pt,3.5pt);
\draw [fill=black] (4,-1) circle (2.5pt);
\draw [fill=black] (3.5301160671617895,-1.9877781701461203) ++(-3.5pt,0 pt) -- ++(3.5pt,3.5pt)--++(3.5pt,-3.5pt)--++(-3.5pt,-3.5pt)--++(-3.5pt,3.5pt);
\draw [fill=black] (4.539533675750619,-1.9877781701461203) ++(-3.5pt,0 pt) -- ++(3.5pt,3.5pt)--++(3.5pt,-3.5pt)--++(-3.5pt,-3.5pt)--++(-3.5pt,3.5pt);
\draw [fill=black] (1,0) circle (2.5pt);
\draw [fill=black] (1,1) ++(-3.5pt,0 pt) -- ++(3.5pt,3.5pt)--++(3.5pt,-3.5pt)--++(-3.5pt,-3.5pt)--++(-3.5pt,3.5pt);
\draw [fill=black] (-0.02,-0.01) circle (2.5pt);
\draw [fill=black] (-0.02,-1.01) ++(-3.5pt,0 pt) -- ++(3.5pt,3.5pt)--++(3.5pt,-3.5pt)--++(-3.5pt,-3.5pt)--++(-3.5pt,3.5pt);
\draw [fill=black] (4,0) circle (2.5pt);
\draw [fill=black] (5,0) ++(-3.5pt,0 pt) -- ++(3.5pt,3.5pt)--++(3.5pt,-3.5pt)--++(-3.5pt,-3.5pt)--++(-3.5pt,3.5pt);
\draw [fill=black] (-1.98,0) circle (2.5pt);
\draw [fill=black] (-1.98,1) ++(-3.5pt,0 pt) -- ++(3.5pt,3.5pt)--++(3.5pt,-3.5pt)--++(-3.5pt,-3.5pt)--++(-3.5pt,3.5pt);
\draw [fill=black] (4.014083413745475,0.9782502824881779) circle (2.5pt);
\draw [fill=black] (3.779013559690542,2.008409348787737) ++(-3.5pt,0 pt) -- ++(3.5pt,3.5pt)--++(3.5pt,-3.5pt)--++(-3.5pt,-3.5pt)--++(-3.5pt,3.5pt);
\draw [fill=black] (4.249153267800409,2.008409348787737) ++(-3.5pt,0 pt) -- ++(3.5pt,3.5pt)--++(3.5pt,-3.5pt)--++(-3.5pt,-3.5pt)--++(-3.5pt,3.5pt);
\draw [fill=black] (3.3503567670021344,1.9945817103139172) ++(-3.5pt,0 pt) -- ++(3.5pt,3.5pt)--++(3.5pt,-3.5pt)--++(-3.5pt,-3.5pt)--++(-3.5pt,3.5pt);
\draw [fill=black] (4.719292975910273,1.9945817103139172) ++(-3.5pt,0 pt) -- ++(3.5pt,3.5pt)--++(3.5pt,-3.5pt)--++(-3.5pt,-3.5pt)--++(-3.5pt,3.5pt);
\draw [fill=black] (-4,-1) ++(-3.5pt,0 pt) -- ++(3.5pt,3.5pt)--++(3.5pt,-3.5pt)--++(-3.5pt,-3.5pt)--++(-3.5pt,3.5pt);
\end{scriptsize}
\end{tikzpicture} \;\begin{tikzpicture}[line cap=round,line join=round,>=triangle 45,x=1cm,y=1cm,scale=0.6, , every node/.style={scale=0.8}]
\clip(-4.9,-3.0) rectangle (6.5,3.44);
\draw [line width=1pt] (-4,0)-- (-4.44,-1.01);
\draw [line width=1pt] (-4,0)-- (-3.46,-1.03);
\draw (-4.26,0.8) node[anchor=north west] {{\tiny $u_{4}$}};
\draw (-4.96,-1.12) node[anchor=north west] {{\tiny $3 \times P_0 \ast S_{1}$}};
\draw (-3.3,1.96) node[anchor=north west] {{\tiny $P_7 \ast S_{1}$}};
\draw [line width=1pt] (-4,0)-- (-3,0);
\draw [line width=1pt] (2,0)-- (3,0);
\draw [line width=1pt] (-1,0)-- (-1,-1);
\draw [line width=1pt] (-1,-1)-- (-1.7085590969545494,-1.995084663720346);
\draw [line width=1pt] (-1,1)-- (-1.42,1.99);
\draw [line width=1pt] (-1,1)-- (-0.46,2.01);
\draw [line width=1pt] (4,-1)-- (3.5301160671617895,-1.9877781701461203);
\draw [line width=1pt] (4,-1)-- (4.539533675750619,-1.9877781701461203);
\draw [line width=1pt] (1,0)-- (1,1);
\draw [line width=1pt] (-0.02,-0.01)-- (-0.02,-1.01);
\draw [line width=1pt] (4,0)-- (5,0);
\draw [line width=1pt] (-1.98,0)-- (-1.98,1);
\draw [line width=1pt] (-3,0)-- (-1.98,0);
\draw [line width=1pt] (-1.98,0)-- (-1,0);
\draw [line width=1pt] (-1,0)-- (-0.02,-0.01);
\draw [line width=1pt] (-1,0)-- (-1,1);
\draw [line width=1pt] (-0.02,-0.01)-- (1,0);
\draw [line width=1pt] (1,0)-- (2,0);
\draw [line width=1pt] (4.014083413745475,0.9782502824881779)-- (3.779013559690542,2.008409348787737);
\draw [line width=1pt] (4.014083413745475,0.9782502824881779)-- (4.249153267800409,2.008409348787737);
\draw [line width=1pt] (4,0)-- (4.014083413745475,0.9782502824881779);
\draw [line width=1pt] (4,0)-- (3,0);
\draw [line width=1pt] (4,-1)-- (4,0);
\draw [line width=1pt] (3.3503567670021344,1.9945817103139172)-- (4.014083413745475,0.9782502824881779);
\draw [line width=1pt] (4.014083413745475,0.9782502824881779)-- (4.719292975910273,1.9945817103139172);
\draw [line width=1pt] (-4,0)-- (-4,-1);
\draw (3.22,3.14) node[anchor=north west] {{\tiny $4 \times P_0 \ast S_{1}$}};
\draw (-3.08,-1.98) node[anchor=north west] {{\tiny $P_1 \ast S_{0}$}};
\draw (-0.46,3.06) node[anchor=north west] {{\tiny $P_1 \ast S_{0}$}};
\draw (-2.44,3.04) node[anchor=north west] {{\tiny $P_0 \ast S_{1}$}};
\draw (3.3,-2.02) node[anchor=north west] {{\tiny $2 \times P_0 \ast S_{1}$}};
\draw (4.64,0.02) node[anchor=north west] {{\tiny $P_0 \ast S_{1}$}};
\draw (1.26,1.8) node[anchor=north west] {{\tiny $P_1 \ast S_{0}$}};
\draw (0.02,-0.88) node[anchor=north west] {{\tiny $P_1 \ast S_{1}$}};
\draw (-0.84,1.48) node[anchor=north west] {{\tiny $u_{1}$}};
\draw (-1.72,-0.36) node[anchor=north west] {{\tiny $u_{2}$}};
\draw (4.28,1.3) node[anchor=north west] {{\tiny $u_{5}$}};
\draw (3.38,-0.46) node[anchor=north west] {{\tiny $u_{3}$}};
\draw (-0.96,-2.08) node[anchor=north west] {{\tiny $P_1 \ast S_{1}$}};
\draw [line width=1pt] (-1,-1)-- (-0.48,-2.02);
\begin{scriptsize}
\draw [fill=black] (-4,0) circle (2.5pt);
\draw [fill=black] (-4.44,-1.01) ++(-3.5pt,0 pt) -- ++(3.5pt,3.5pt)--++(3.5pt,-3.5pt)--++(-3.5pt,-3.5pt)--++(-3.5pt,3.5pt);
\draw [fill=black] (-3.46,-1.03) ++(-3.5pt,0 pt) -- ++(3.5pt,3.5pt)--++(3.5pt,-3.5pt)--++(-3.5pt,-3.5pt)--++(-3.5pt,3.5pt);
\draw [fill=black] (-3,0) circle (2.5pt);
\draw [fill=black] (-1,0) circle (2.5pt);
\draw [fill=black] (2,0) circle (2.5pt);
\draw [fill=black] (3,0) circle (2.5pt);
\draw [fill=black] (-1,-1) circle (2.5pt);
\draw [fill=black] (-1.7085590969545494,-1.995084663720346) ++(-3.5pt,0 pt) -- ++(3.5pt,3.5pt)--++(3.5pt,-3.5pt)--++(-3.5pt,-3.5pt)--++(-3.5pt,3.5pt);
\draw [fill=black] (-0.48,-2.02) ++(-3.5pt,0 pt) -- ++(3.5pt,3.5pt)--++(3.5pt,-3.5pt)--++(-3.5pt,-3.5pt)--++(-3.5pt,3.5pt);
\draw [fill=black] (-1,1) circle (2.5pt);
\draw [fill=black] (-1.42,1.99) ++(-3.5pt,0 pt) -- ++(3.5pt,3.5pt)--++(3.5pt,-3.5pt)--++(-3.5pt,-3.5pt)--++(-3.5pt,3.5pt);
\draw [fill=black] (-0.46,2.01) ++(-3.5pt,0 pt) -- ++(3.5pt,3.5pt)--++(3.5pt,-3.5pt)--++(-3.5pt,-3.5pt)--++(-3.5pt,3.5pt);
\draw [fill=black] (4,-1) circle (2.5pt);
\draw [fill=black] (3.5301160671617895,-1.9877781701461203) ++(-3.5pt,0 pt) -- ++(3.5pt,3.5pt)--++(3.5pt,-3.5pt)--++(-3.5pt,-3.5pt)--++(-3.5pt,3.5pt);
\draw [fill=black] (4.539533675750619,-1.9877781701461203) ++(-3.5pt,0 pt) -- ++(3.5pt,3.5pt)--++(3.5pt,-3.5pt)--++(-3.5pt,-3.5pt)--++(-3.5pt,3.5pt);
\draw [fill=black] (1,0) circle (2.5pt);
\draw [fill=black] (1,1) ++(-3.5pt,0 pt) -- ++(3.5pt,3.5pt)--++(3.5pt,-3.5pt)--++(-3.5pt,-3.5pt)--++(-3.5pt,3.5pt);
\draw [fill=black] (-0.02,-0.01) circle (2.5pt);
\draw [fill=black] (-0.02,-1.01) ++(-3.5pt,0 pt) -- ++(3.5pt,3.5pt)--++(3.5pt,-3.5pt)--++(-3.5pt,-3.5pt)--++(-3.5pt,3.5pt);
\draw [fill=black] (4,0) circle (2.5pt);
\draw [fill=black] (5,0) ++(-3.5pt,0 pt) -- ++(3.5pt,3.5pt)--++(3.5pt,-3.5pt)--++(-3.5pt,-3.5pt)--++(-3.5pt,3.5pt);
\draw [fill=black] (-1.98,0) circle (2.5pt);
\draw [fill=black] (-1.98,1) ++(-3.5pt,0 pt) -- ++(3.5pt,3.5pt)--++(3.5pt,-3.5pt)--++(-3.5pt,-3.5pt)--++(-3.5pt,3.5pt);
\draw [fill=black] (4.014083413745475,0.9782502824881779) circle (2.5pt);
\draw [fill=black] (3.779013559690542,2.008409348787737) ++(-3.5pt,0 pt) -- ++(3.5pt,3.5pt)--++(3.5pt,-3.5pt)--++(-3.5pt,-3.5pt)--++(-3.5pt,3.5pt);
\draw [fill=black] (4.249153267800409,2.008409348787737) ++(-3.5pt,0 pt) -- ++(3.5pt,3.5pt)--++(3.5pt,-3.5pt)--++(-3.5pt,-3.5pt)--++(-3.5pt,3.5pt);
\draw [fill=black] (3.3503567670021344,1.9945817103139172) ++(-3.5pt,0 pt) -- ++(3.5pt,3.5pt)--++(3.5pt,-3.5pt)--++(-3.5pt,-3.5pt)--++(-3.5pt,3.5pt);
\draw [fill=black] (4.719292975910273,1.9945817103139172) ++(-3.5pt,0 pt) -- ++(3.5pt,3.5pt)--++(3.5pt,-3.5pt)--++(-3.5pt,-3.5pt)--++(-3.5pt,3.5pt);
\draw [fill=black] (-4,-1) ++(-3.5pt,0 pt) -- ++(3.5pt,3.5pt)--++(3.5pt,-3.5pt)--++(-3.5pt,-3.5pt)--++(-3.5pt,3.5pt);
\end{scriptsize}
\end{tikzpicture} 
\caption{Application of procedures \texttt{InitiateRepresentation} and identification of 5 starlike vertices.}
\label{fig:an_example:0001}
\end{figure}
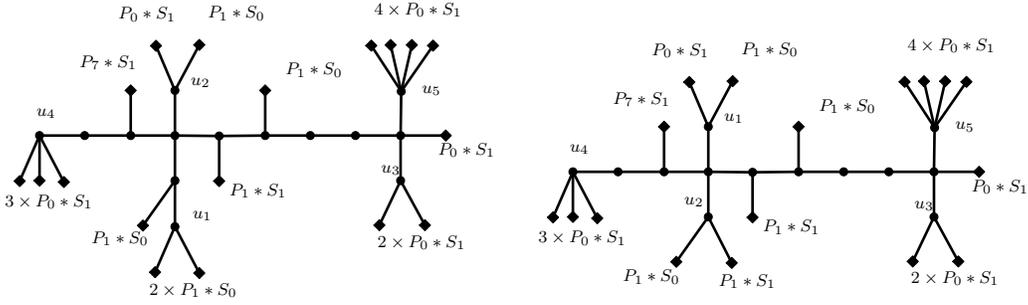

The initialization step of algorithm \texttt{Transform} is to apply the
procedure \texttt{I\-ni\-ti\-a\-te\-Re\-pre\-sen\-ta\-tion} to the tree $T$ of
Figure~\ref{fig:ex3:1}, obtaining the $(P_q, S_r)$ representation illustrated
in Figure~\ref{fig:an_example:0001} (left).
The second step is to identify
the starlike vertices.
There are five, $u_1, \, u_2, \, u_3, \, u_4, \, u_5$
in increasing order of weights $2, \, 3, \, 4, \, 6, \, 8$.
As prescribed by the Algorithm \texttt{Transform}$(T)$, for $k=5 \geq 3$
we should apply \texttt{ReduceStarVertex}$(T, u_1)$, which is
$P(u_1) = P_1 \ast S_0 \oplus P_1 \ast S_0$.
After \texttt{ReduceStarVertex}$(T, u_1)$, the gpps at $u_1$
are reduced to
$P(u_1) = P_0\ast S_1$.
However $q$ gets incremented by one in the if-statement of
\texttt{ReduceStarVertex}
so the new gpp is $P_1\ast S_1$.
The starlike vertex $u_1$ is
eliminated producing a new starlike vertex with weight 4 which is labeled
as $u_2$, where
$P(u_2) = P_1\ast S_0 \oplus P_1 \ast S_1$.
The previous $u_2$ is labeled as the new $u_1$. This is illustrated in
Figure~\ref{fig:an_example:0001} (right).

In Figure~\ref{fig:an_example:0001} (right) we still have $k=5 \geq 3$ so
we apply \texttt{ReduceStarVertex}$(T, u_1)$.
After
\texttt{ReduceStarVertex}$(T, u_1)$, the starlike vertex $u_1$ is transformed
to $P(u_1)= P_1 \ast S_1$.
However, as $u_1$ is attached to its neighbor by
the path $P_1$, its neighbor will have the single ggp $P_2 \ast S_1$.
As no new starlike vertex is created, the number of starlike vertices is reduced by
one, therefore the previous $u_{i}$ is labeled as the new $u_{i-1}$ for
$i=2,3,4,5$, as in Figure~\ref{fig:an_example:0203} (left).
Now we have $k=4
\geq 3$ and after applying \texttt{ReduceStarVertex}$(T, u_1)$, we obtain a
new starlike vertex
$ P_1 \ast S_2 \oplus P_2 \ast S_1$, whose weight is 9.
Hence, the number of starlike vertices remains the same and the remaining
starlike vertices are relabeled according to its weights, as in
Figure~\ref{fig:an_example:0203} (right).
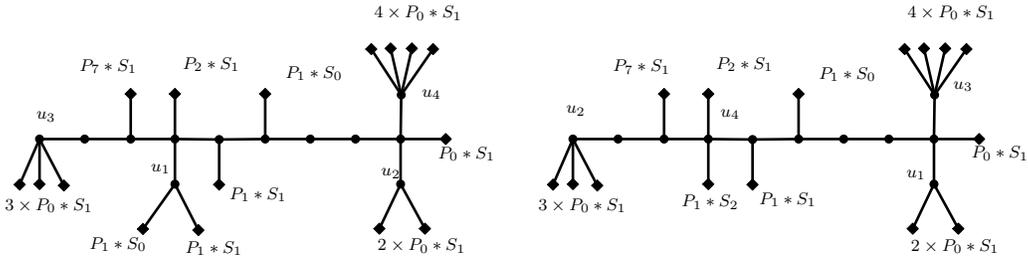
\begin{figure}[!ht]
  \centering
\begin{tikzpicture}[line cap=round,line join=round,>=triangle 45,x=1cm,y=1cm,scale=0.6, every node/.style={scale=0.8}]
\clip(-4.9,-3.0) rectangle (6.5,3.44);
\draw [line width=1pt] (-4,0)-- (-4.44,-1.01);
\draw [line width=1pt] (-4,0)-- (-3.46,-1.03);
\draw (-4.26,0.8) node[anchor=north west] {{\tiny $u_{3}$}};
\draw (-4.96,-1.12) node[anchor=north west] {{\tiny $3 \times P_0 \ast S_{1}$}};
\draw (-3.3,1.96) node[anchor=north west] {{\tiny $P_7 \ast S_{1}$}};
\draw [line width=1pt] (-4,0)-- (-3,0);
\draw [line width=1pt] (2,0)-- (3,0);
\draw [line width=1pt] (-1,0)-- (-1,-1);
\draw [line width=1pt] (-1,-1)-- (-1.7085590969545494,-1.995084663720346);
\draw [line width=1pt] (4,-1)-- (3.5301160671617895,-1.9877781701461203);
\draw [line width=1pt] (4,-1)-- (4.539533675750619,-1.9877781701461203);
\draw [line width=1pt] (1,0)-- (1,1);
\draw [line width=1pt] (-0.02,-0.01)-- (-0.02,-1.01);
\draw [line width=1pt] (4,0)-- (5,0);
\draw [line width=1pt] (-1.98,0)-- (-1.98,1);
\draw [line width=1pt] (-3,0)-- (-1.98,0);
\draw [line width=1pt] (-1.98,0)-- (-1,0);
\draw [line width=1pt] (-1,0)-- (-0.02,-0.01);
\draw [line width=1pt] (-0.02,-0.01)-- (1,0);
\draw [line width=1pt] (1,0)-- (2,0);
\draw [line width=1pt] (4.014083413745475,0.9782502824881779)-- (3.779013559690542,2.008409348787737);
\draw [line width=1pt] (4.014083413745475,0.9782502824881779)-- (4.249153267800409,2.008409348787737);
\draw [line width=1pt] (4,0)-- (4.014083413745475,0.9782502824881779);
\draw [line width=1pt] (4,0)-- (3,0);
\draw [line width=1pt] (4,-1)-- (4,0);
\draw [line width=1pt] (3.3503567670021344,1.9945817103139172)-- (4.014083413745475,0.9782502824881779);
\draw [line width=1pt] (4.014083413745475,0.9782502824881779)-- (4.719292975910273,1.9945817103139172);
\draw [line width=1pt] (-4,0)-- (-4,-1);
\draw (3.22,3.14) node[anchor=north west] {{\tiny $4 \times P_0 \ast S_{1}$}};
\draw (-3.08,-1.98) node[anchor=north west] {{\tiny $P_1 \ast S_{0}$}};
\draw (3.3,-2.02) node[anchor=north west] {{\tiny $2 \times P_0 \ast S_{1}$}};
\draw (4.64,0.02) node[anchor=north west] {{\tiny $P_0 \ast S_{1}$}};
\draw (1.26,1.8) node[anchor=north west] {{\tiny $P_1 \ast S_{0}$}};
\draw (0.02,-0.88) node[anchor=north west] {{\tiny $P_1 \ast S_{1}$}};
\draw (-1.72,-0.36) node[anchor=north west] {{\tiny $u_{1}$}};
\draw (4.28,1.3) node[anchor=north west] {{\tiny $u_{4}$}};
\draw (3.38,-0.46) node[anchor=north west] {{\tiny $u_{2}$}};
\draw (-0.96,-2.08) node[anchor=north west] {{\tiny $P_1 \ast S_{1}$}};
\draw [line width=1pt] (-1,-1)-- (-0.48,-2.02);
\draw [line width=1pt] (-1,1)-- (-1,0);
\draw (-1.02,2) node[anchor=north west] {{\tiny $P_2 \ast S_{1}$}};
\begin{scriptsize}
\draw [fill=black] (-4,0) circle (2.5pt);
\draw [fill=black] (-4.44,-1.01) ++(-3.5pt,0 pt) -- ++(3.5pt,3.5pt)--++(3.5pt,-3.5pt)--++(-3.5pt,-3.5pt)--++(-3.5pt,3.5pt);
\draw [fill=black] (-3.46,-1.03) ++(-3.5pt,0 pt) -- ++(3.5pt,3.5pt)--++(3.5pt,-3.5pt)--++(-3.5pt,-3.5pt)--++(-3.5pt,3.5pt);
\draw [fill=black] (-3,0) circle (2.5pt);
\draw [fill=black] (-1,0) circle (2.5pt);
\draw [fill=black] (2,0) circle (2.5pt);
\draw [fill=black] (3,0) circle (2.5pt);
\draw [fill=black] (-1,-1) circle (2.5pt);
\draw [fill=black] (-1.7085590969545494,-1.995084663720346) ++(-3.5pt,0 pt) -- ++(3.5pt,3.5pt)--++(3.5pt,-3.5pt)--++(-3.5pt,-3.5pt)--++(-3.5pt,3.5pt);
\draw [fill=black] (-0.48,-2.02) ++(-3.5pt,0 pt) -- ++(3.5pt,3.5pt)--++(3.5pt,-3.5pt)--++(-3.5pt,-3.5pt)--++(-3.5pt,3.5pt);
\draw [fill=black] (-1,1) ++(-3.5pt,0 pt) -- ++(3.5pt,3.5pt)--++(3.5pt,-3.5pt)--++(-3.5pt,-3.5pt)--++(-3.5pt,3.5pt);
\draw [fill=black] (4,-1) circle (2.5pt);
\draw [fill=black] (3.5301160671617895,-1.9877781701461203) ++(-3.5pt,0 pt) -- ++(3.5pt,3.5pt)--++(3.5pt,-3.5pt)--++(-3.5pt,-3.5pt)--++(-3.5pt,3.5pt);
\draw [fill=black] (4.539533675750619,-1.9877781701461203) ++(-3.5pt,0 pt) -- ++(3.5pt,3.5pt)--++(3.5pt,-3.5pt)--++(-3.5pt,-3.5pt)--++(-3.5pt,3.5pt);
\draw [fill=black] (1,0) circle (2.5pt);
\draw [fill=black] (1,1) ++(-3.5pt,0 pt) -- ++(3.5pt,3.5pt)--++(3.5pt,-3.5pt)--++(-3.5pt,-3.5pt)--++(-3.5pt,3.5pt);
\draw [fill=black] (-0.02,-0.01) circle (2.5pt);
\draw [fill=black] (-0.02,-1.01) ++(-3.5pt,0 pt) -- ++(3.5pt,3.5pt)--++(3.5pt,-3.5pt)--++(-3.5pt,-3.5pt)--++(-3.5pt,3.5pt);
\draw [fill=black] (4,0) circle (2.5pt);
\draw [fill=black] (5,0) ++(-3.5pt,0 pt) -- ++(3.5pt,3.5pt)--++(3.5pt,-3.5pt)--++(-3.5pt,-3.5pt)--++(-3.5pt,3.5pt);
\draw [fill=black] (-1.98,0) circle (2.5pt);
\draw [fill=black] (-1.98,1) ++(-3.5pt,0 pt) -- ++(3.5pt,3.5pt)--++(3.5pt,-3.5pt)--++(-3.5pt,-3.5pt)--++(-3.5pt,3.5pt);
\draw [fill=black] (4.014083413745475,0.9782502824881779) circle (2.5pt);
\draw [fill=black] (3.779013559690542,2.008409348787737) ++(-3.5pt,0 pt) -- ++(3.5pt,3.5pt)--++(3.5pt,-3.5pt)--++(-3.5pt,-3.5pt)--++(-3.5pt,3.5pt);
\draw [fill=black] (4.249153267800409,2.008409348787737) ++(-3.5pt,0 pt) -- ++(3.5pt,3.5pt)--++(3.5pt,-3.5pt)--++(-3.5pt,-3.5pt)--++(-3.5pt,3.5pt);
\draw [fill=black] (3.3503567670021344,1.9945817103139172) ++(-3.5pt,0 pt) -- ++(3.5pt,3.5pt)--++(3.5pt,-3.5pt)--++(-3.5pt,-3.5pt)--++(-3.5pt,3.5pt);
\draw [fill=black] (4.719292975910273,1.9945817103139172) ++(-3.5pt,0 pt) -- ++(3.5pt,3.5pt)--++(3.5pt,-3.5pt)--++(-3.5pt,-3.5pt)--++(-3.5pt,3.5pt);
\draw [fill=black] (-4,-1) ++(-3.5pt,0 pt) -- ++(3.5pt,3.5pt)--++(3.5pt,-3.5pt)--++(-3.5pt,-3.5pt)--++(-3.5pt,3.5pt);
\end{scriptsize}
\end{tikzpicture} \;\begin{tikzpicture}[line cap=round,line join=round,>=triangle 45,x=1cm,y=1cm,scale=0.6, every node/.style={scale=0.8}]
\clip(-4.9,-3.0) rectangle (6.5,3.44);
\draw [line width=1pt] (-4,0)-- (-4.44,-1.01);
\draw [line width=1pt] (-4,0)-- (-3.46,-1.03);
\draw (4.24,1.47) node[anchor=north west] {{\tiny $u_{3}$}};
\draw (-4.96,-1.13) node[anchor=north west] {{\tiny $3 \times P_0 \ast S_{1}$}};
\draw (-3.3,1.95) node[anchor=north west] {{\tiny $P_7 \ast S_{1}$}};
\draw [line width=1pt] (-4,0)-- (-3,0);
\draw [line width=1pt] (2,0)-- (3,0);
\draw [line width=1pt] (4,-1)-- (3.5301160671617895,-1.9877781701461203);
\draw [line width=1pt] (4,-1)-- (4.539533675750619,-1.9877781701461203);
\draw [line width=1pt] (1,0)-- (1,1);
\draw [line width=1pt] (-0.02,-0.01)-- (-0.02,-1.01);
\draw [line width=1pt] (4,0)-- (5,0);
\draw [line width=1pt] (-1.98,0)-- (-1.98,1);
\draw [line width=1pt] (-3,0)-- (-1.98,0);
\draw [line width=1pt] (-1.98,0)-- (-1,0);
\draw [line width=1pt] (-1,0)-- (-0.02,-0.01);
\draw [line width=1pt] (-0.02,-0.01)-- (1,0);
\draw [line width=1pt] (1,0)-- (2,0);
\draw [line width=1pt] (4.014083413745475,0.9782502824881779)-- (3.779013559690542,2.008409348787737);
\draw [line width=1pt] (4.014083413745475,0.9782502824881779)-- (4.249153267800409,2.008409348787737);
\draw [line width=1pt] (4,0)-- (4.014083413745475,0.9782502824881779);
\draw [line width=1pt] (4,0)-- (3,0);
\draw [line width=1pt] (4,-1)-- (4,0);
\draw [line width=1pt] (3.3503567670021344,1.9945817103139172)-- (4.014083413745475,0.9782502824881779);
\draw [line width=1pt] (4.014083413745475,0.9782502824881779)-- (4.719292975910273,1.9945817103139172);
\draw [line width=1pt] (-4,0)-- (-4,-1);
\draw (3.22,3.15) node[anchor=north west] {{\tiny $4 \times P_0 \ast S_{1}$}};
\draw (3.3,-2.03) node[anchor=north west] {{\tiny $2 \times P_0 \ast S_{1}$}};
\draw (4.64,0.03) node[anchor=north west] {{\tiny $P_0 \ast S_{1}$}};
\draw (1.26,1.79) node[anchor=north west] {{\tiny $P_1 \ast S_{0}$}};
\draw (-0.06,-0.99) node[anchor=north west] {{\tiny $P_1 \ast S_{1}$}};
\draw (3.2,-0.49) node[anchor=north west] {{\tiny $u_{1}$}};
\draw (-4.34,0.93) node[anchor=north west] {{\tiny $u_{2}$}};
\draw (-1.78,-1.05) node[anchor=north west] {{\tiny $P_1 \ast S_{2}$}};
\draw [line width=1pt] (-1,1)-- (-1,0);
\draw (-1.02,2.01) node[anchor=north west] {{\tiny $P_2 \ast S_{1}$}};
\draw [line width=1pt] (-1,0)-- (-1,-1);
\draw (-0.92,0.85) node[anchor=north west] {{\tiny $u_{4}$}};
\begin{scriptsize}
\draw [fill=black] (-4,0) circle (2.5pt);
\draw [fill=black] (-4.44,-1.01) ++(-3.5pt,0 pt) -- ++(3.5pt,3.5pt)--++(3.5pt,-3.5pt)--++(-3.5pt,-3.5pt)--++(-3.5pt,3.5pt);
\draw [fill=black] (-3.46,-1.03) ++(-3.5pt,0 pt) -- ++(3.5pt,3.5pt)--++(3.5pt,-3.5pt)--++(-3.5pt,-3.5pt)--++(-3.5pt,3.5pt);
\draw [fill=black] (-3,0) circle (2.5pt);
\draw [fill=black] (-1,0) circle (2.5pt);
\draw [fill=black] (2,0) circle (2.5pt);
\draw [fill=black] (3,0) circle (2.5pt);
\draw [fill=black] (-1,-1) ++(-3.5pt,0 pt) -- ++(3.5pt,3.5pt)--++(3.5pt,-3.5pt)--++(-3.5pt,-3.5pt)--++(-3.5pt,3.5pt);
\draw [fill=black] (-1,1) ++(-3.5pt,0 pt) -- ++(3.5pt,3.5pt)--++(3.5pt,-3.5pt)--++(-3.5pt,-3.5pt)--++(-3.5pt,3.5pt);
\draw [fill=black] (4,-1) circle (2.5pt);
\draw [fill=black] (3.5301160671617895,-1.9877781701461203) ++(-3.5pt,0 pt) -- ++(3.5pt,3.5pt)--++(3.5pt,-3.5pt)--++(-3.5pt,-3.5pt)--++(-3.5pt,3.5pt);
\draw [fill=black] (4.539533675750619,-1.9877781701461203) ++(-3.5pt,0 pt) -- ++(3.5pt,3.5pt)--++(3.5pt,-3.5pt)--++(-3.5pt,-3.5pt)--++(-3.5pt,3.5pt);
\draw [fill=black] (1,0) circle (2.5pt);
\draw [fill=black] (1,1) ++(-3.5pt,0 pt) -- ++(3.5pt,3.5pt)--++(3.5pt,-3.5pt)--++(-3.5pt,-3.5pt)--++(-3.5pt,3.5pt);
\draw [fill=black] (-0.02,-0.01) circle (2.5pt);
\draw [fill=black] (-0.02,-1.01) ++(-3.5pt,0 pt) -- ++(3.5pt,3.5pt)--++(3.5pt,-3.5pt)--++(-3.5pt,-3.5pt)--++(-3.5pt,3.5pt);
\draw [fill=black] (4,0) circle (2.5pt);
\draw [fill=black] (5,0) ++(-3.5pt,0 pt) -- ++(3.5pt,3.5pt)--++(3.5pt,-3.5pt)--++(-3.5pt,-3.5pt)--++(-3.5pt,3.5pt);
\draw [fill=black] (-1.98,0) circle (2.5pt);
\draw [fill=black] (-1.98,1) ++(-3.5pt,0 pt) -- ++(3.5pt,3.5pt)--++(3.5pt,-3.5pt)--++(-3.5pt,-3.5pt)--++(-3.5pt,3.5pt);
\draw [fill=black] (4.014083413745475,0.9782502824881779) circle (2.5pt);
\draw [fill=black] (3.779013559690542,2.008409348787737) ++(-3.5pt,0 pt) -- ++(3.5pt,3.5pt)--++(3.5pt,-3.5pt)--++(-3.5pt,-3.5pt)--++(-3.5pt,3.5pt);
\draw [fill=black] (4.249153267800409,2.008409348787737) ++(-3.5pt,0 pt) -- ++(3.5pt,3.5pt)--++(3.5pt,-3.5pt)--++(-3.5pt,-3.5pt)--++(-3.5pt,3.5pt);
\draw [fill=black] (3.3503567670021344,1.9945817103139172) ++(-3.5pt,0 pt) -- ++(3.5pt,3.5pt)--++(3.5pt,-3.5pt)--++(-3.5pt,-3.5pt)--++(-3.5pt,3.5pt);
\draw [fill=black] (4.719292975910273,1.9945817103139172) ++(-3.5pt,0 pt) -- ++(3.5pt,3.5pt)--++(3.5pt,-3.5pt)--++(-3.5pt,-3.5pt)--++(-3.5pt,3.5pt);
\draw [fill=black] (-4,-1) ++(-3.5pt,0 pt) -- ++(3.5pt,3.5pt)--++(3.5pt,-3.5pt)--++(-3.5pt,-3.5pt)--++(-3.5pt,3.5pt);
\end{scriptsize}
\end{tikzpicture} 
 \caption {Relabeling after applications of \texttt{ReduceStarVertex}$(T, u_1)$ with $k=4$.}
\label{fig:an_example:0203}
\end{figure}

Now, in Figure~\ref{fig:an_example:0203} (right), we have $k=4$ and then
after applying \texttt{Reduce\-Star\-Ver\-tex}$(T, u_1)$,
the gpp $P_1 \ast S_2$ is created.
The starlike vertex $u_1$
is eliminated while a new starlike vertex is created having weight equal 7.
Then it is relabeled as $u_2$ again and $u_{2}$ is relabeled as $u_{1}$. Only
$u_{3}$ and  $u_{4}$ remain unchanged, as Figure~\ref{fig:an_example:0405}
(left) illustrates.

We still have $k=4$ and apply
\texttt{ReduceStarVertex}$(T, u_1)$,
creating the gpp
$P_2 \ast S_3$.
The starlike vertex $u_1$ is eliminated
and a new starlike vertex is created. Counting the weights we label the new
starlike vertices as $u_{1}$, $u_{2}$, $u_{3}$ and $u_{4}$ with weights
$7,8,9$~and~$17$, as in Figure~\ref{fig:an_example:0405}
(right).
\begin{figure}[!ht]
  \centering
\begin{tikzpicture}[line cap=round,line join=round,>=triangle 45,x=1cm,y=1cm,scale=0.6, every node/.style={scale=0.8}]
\clip(-4.9,-2.0) rectangle (6.5,3.44);
\draw [line width=1pt] (-4,0)-- (-4.44,-1.01);
\draw [line width=1pt] (-4,0)-- (-3.46,-1.03);
\draw (4.04,1.06) node[anchor=north west] {{\tiny $u_{3}$}};
\draw (-4.96,-1.12) node[anchor=north west] {{\tiny $3 \times P_0 \ast S_{1}$}};
\draw (-3.3,1.96) node[anchor=north west] {{\tiny $P_7 \ast S_{1}$}};
\draw [line width=1pt] (-4,0)-- (-3,0);
\draw [line width=1pt] (2,0)-- (3,0);
\draw [line width=1pt] (1,0)-- (1,1);
\draw [line width=1pt] (-0.02,-0.01)-- (-0.02,-1.01);
\draw [line width=1pt] (4,0)-- (5,0);
\draw [line width=1pt] (-1.98,0)-- (-1.98,1);
\draw [line width=1pt] (-3,0)-- (-1.98,0);
\draw [line width=1pt] (-1.98,0)-- (-1,0);
\draw [line width=1pt] (-1,0)-- (-0.02,-0.01);
\draw [line width=1pt] (-0.02,-0.01)-- (1,0);
\draw [line width=1pt] (1,0)-- (2,0);
\draw [line width=1pt] (4.014083413745475,0.9782502824881779)-- (3.779013559690542,2.008409348787737);
\draw [line width=1pt] (4.014083413745475,0.9782502824881779)-- (4.249153267800409,2.008409348787737);
\draw [line width=1pt] (4,0)-- (4.014083413745475,0.9782502824881779);
\draw [line width=1pt] (4,0)-- (3,0);
\draw [line width=1pt] (3.3503567670021344,1.9945817103139172)-- (4.014083413745475,0.9782502824881779);
\draw [line width=1pt] (4.014083413745475,0.9782502824881779)-- (4.719292975910273,1.9945817103139172);
\draw [line width=1pt] (-4,0)-- (-4,-1);
\draw (3.22,3.14) node[anchor=north west] {{\tiny $4 \times P_0 \ast S_{1}$}};
\draw (4.64,0.02) node[anchor=north west] {{\tiny $P_0 \ast S_{1}$}};
\draw (1.26,1.8) node[anchor=north west] {{\tiny $P_1 \ast S_{0}$}};
\draw (-0.06,-0.98) node[anchor=north west] {{\tiny $P_1 \ast S_{1}$}};
\draw (-4.7,0.58) node[anchor=north west] {{\tiny $u_{1}$}};
\draw (3.2,0.56) node[anchor=north west] {{\tiny $u_{2}$}};
\draw (-1.78,-1.04) node[anchor=north west] {{\tiny $P_1 \ast S_{2}$}};
\draw [line width=1pt] (-1,1)-- (-1,0);
\draw (-1.02,2) node[anchor=north west] {{\tiny $P_2 \ast S_{1}$}};
\draw [line width=1pt] (-1,0)-- (-1,-1);
\draw (3.4,-0.96) node[anchor=north west] {{\tiny $P_1 \ast S_{2}$}};
\draw [line width=1pt] (4,0)-- (4,-1);
\draw (-1.02,0.54) node[anchor=north west] {{\tiny $u_{4}$}};
\begin{scriptsize}
\draw [fill=black] (-4,0) circle (2.5pt);
\draw [fill=black] (-4.44,-1.01) ++(-3.5pt,0 pt) -- ++(3.5pt,3.5pt)--++(3.5pt,-3.5pt)--++(-3.5pt,-3.5pt)--++(-3.5pt,3.5pt);
\draw [fill=black] (-3.46,-1.03) ++(-3.5pt,0 pt) -- ++(3.5pt,3.5pt)--++(3.5pt,-3.5pt)--++(-3.5pt,-3.5pt)--++(-3.5pt,3.5pt);
\draw [fill=black] (-3,0) circle (2.5pt);
\draw [fill=black] (-1,0) circle (2.5pt);
\draw [fill=black] (2,0) circle (2.5pt);
\draw [fill=black] (3,0) circle (2.5pt);
\draw [fill=black] (-1,-1) ++(-3.5pt,0 pt) -- ++(3.5pt,3.5pt)--++(3.5pt,-3.5pt)--++(-3.5pt,-3.5pt)--++(-3.5pt,3.5pt);
\draw [fill=black] (-1,1) ++(-3.5pt,0 pt) -- ++(3.5pt,3.5pt)--++(3.5pt,-3.5pt)--++(-3.5pt,-3.5pt)--++(-3.5pt,3.5pt);
\draw [fill=black] (4,-1) ++(-3.5pt,0 pt) -- ++(3.5pt,3.5pt)--++(3.5pt,-3.5pt)--++(-3.5pt,-3.5pt)--++(-3.5pt,3.5pt);
\draw [fill=black] (1,0) circle (2.5pt);
\draw [fill=black] (1,1) ++(-3.5pt,0 pt) -- ++(3.5pt,3.5pt)--++(3.5pt,-3.5pt)--++(-3.5pt,-3.5pt)--++(-3.5pt,3.5pt);
\draw [fill=black] (-0.02,-0.01) circle (2.5pt);
\draw [fill=black] (-0.02,-1.01) ++(-3.5pt,0 pt) -- ++(3.5pt,3.5pt)--++(3.5pt,-3.5pt)--++(-3.5pt,-3.5pt)--++(-3.5pt,3.5pt);
\draw [fill=black] (4,0) circle (2.5pt);
\draw [fill=black] (5,0) ++(-3.5pt,0 pt) -- ++(3.5pt,3.5pt)--++(3.5pt,-3.5pt)--++(-3.5pt,-3.5pt)--++(-3.5pt,3.5pt);
\draw [fill=black] (-1.98,0) circle (2.5pt);
\draw [fill=black] (-1.98,1) ++(-3.5pt,0 pt) -- ++(3.5pt,3.5pt)--++(3.5pt,-3.5pt)--++(-3.5pt,-3.5pt)--++(-3.5pt,3.5pt);
\draw [fill=black] (4.014083413745475,0.9782502824881779) circle (2.5pt);
\draw [fill=black] (3.779013559690542,2.008409348787737) ++(-3.5pt,0 pt) -- ++(3.5pt,3.5pt)--++(3.5pt,-3.5pt)--++(-3.5pt,-3.5pt)--++(-3.5pt,3.5pt);
\draw [fill=black] (4.249153267800409,2.008409348787737) ++(-3.5pt,0 pt) -- ++(3.5pt,3.5pt)--++(3.5pt,-3.5pt)--++(-3.5pt,-3.5pt)--++(-3.5pt,3.5pt);
\draw [fill=black] (3.3503567670021344,1.9945817103139172) ++(-3.5pt,0 pt) -- ++(3.5pt,3.5pt)--++(3.5pt,-3.5pt)--++(-3.5pt,-3.5pt)--++(-3.5pt,3.5pt);
\draw [fill=black] (4.719292975910273,1.9945817103139172) ++(-3.5pt,0 pt) -- ++(3.5pt,3.5pt)--++(3.5pt,-3.5pt)--++(-3.5pt,-3.5pt)--++(-3.5pt,3.5pt);
\draw [fill=black] (-4,-1) ++(-3.5pt,0 pt) -- ++(3.5pt,3.5pt)--++(3.5pt,-3.5pt)--++(-3.5pt,-3.5pt)--++(-3.5pt,3.5pt);
\end{scriptsize}
\end{tikzpicture} \;\begin{tikzpicture}[line cap=round,line join=round,>=triangle 45,x=1cm,y=1cm,scale=0.6, every node/.style={scale=0.8}]
\clip(-4.9,-2.0) rectangle (6.5,3.44);
\draw (-0.96,0.66) node[anchor=north west] {{\tiny $u_{3}$}};
\draw (-4.1,0.08) node[anchor=north west] {{\tiny $P_2 \ast S_{3}$}};
\draw (-3.3,1.96) node[anchor=north west] {{\tiny $P_7 \ast S_{1}$}};
\draw [line width=1pt] (2,0)-- (3,0);
\draw [line width=1pt] (1,0)-- (1,1);
\draw [line width=1pt] (-0.02,-0.01)-- (-0.02,-1.01);
\draw [line width=1pt] (4,0)-- (5,0);
\draw [line width=1pt] (-1.98,0)-- (-1.98,1);
\draw [line width=1pt] (-1.98,0)-- (-1,0);
\draw [line width=1pt] (-1,0)-- (-0.02,-0.01);
\draw [line width=1pt] (-0.02,-0.01)-- (1,0);
\draw [line width=1pt] (1,0)-- (2,0);
\draw [line width=1pt] (4.014083413745475,0.9782502824881779)-- (3.779013559690542,2.008409348787737);
\draw [line width=1pt] (4.014083413745475,0.9782502824881779)-- (4.249153267800409,2.008409348787737);
\draw [line width=1pt] (4,0)-- (4.014083413745475,0.9782502824881779);
\draw [line width=1pt] (4,0)-- (3,0);
\draw [line width=1pt] (3.3503567670021344,1.9945817103139172)-- (4.014083413745475,0.9782502824881779);
\draw [line width=1pt] (4.014083413745475,0.9782502824881779)-- (4.719292975910273,1.9945817103139172);
\draw (3.22,3.14) node[anchor=north west] {{\tiny $4 \times P_0 \ast S_{1}$}};
\draw (4.64,0.02) node[anchor=north west] {{\tiny $P_0 \ast S_{1}$}};
\draw (1.26,1.8) node[anchor=north west] {{\tiny $P_1 \ast S_{0}$}};
\draw (-0.06,-0.98) node[anchor=north west] {{\tiny $P_1 \ast S_{1}$}};
\draw (3.34,0.64) node[anchor=north west] {{\tiny $u_{1}$}};
\draw (4.18,1.26) node[anchor=north west] {{\tiny $u_{2}$}};
\draw (-1.78,-1.04) node[anchor=north west] {{\tiny $P_1 \ast S_{2}$}};
\draw [line width=1pt] (-1,1)-- (-1,0);
\draw (-1.02,2) node[anchor=north west] {{\tiny $P_2 \ast S_{1}$}};
\draw [line width=1pt] (-1,0)-- (-1,-1);
\draw (3.4,-0.96) node[anchor=north west] {{\tiny $P_1 \ast S_{2}$}};
\draw [line width=1pt] (4,0)-- (4,-1);
\draw [line width=1pt] (-3,0)-- (-1.98,0);
\draw (-2.38,-0.12) node[anchor=north west] {{\tiny $u_{4}$}};
\begin{scriptsize}
\draw [fill=black] (-3,0) ++(-3.5pt,0 pt) -- ++(3.5pt,3.5pt)--++(3.5pt,-3.5pt)--++(-3.5pt,-3.5pt)--++(-3.5pt,3.5pt);
\draw [fill=black] (-1,0) circle (2.5pt);
\draw [fill=black] (2,0) circle (2.5pt);
\draw [fill=black] (3,0) circle (2.5pt);
\draw [fill=black] (-1,-1) ++(-3.5pt,0 pt) -- ++(3.5pt,3.5pt)--++(3.5pt,-3.5pt)--++(-3.5pt,-3.5pt)--++(-3.5pt,3.5pt);
\draw [fill=black] (-1,1) ++(-3.5pt,0 pt) -- ++(3.5pt,3.5pt)--++(3.5pt,-3.5pt)--++(-3.5pt,-3.5pt)--++(-3.5pt,3.5pt);
\draw [fill=black] (4,-1) ++(-3.5pt,0 pt) -- ++(3.5pt,3.5pt)--++(3.5pt,-3.5pt)--++(-3.5pt,-3.5pt)--++(-3.5pt,3.5pt);
\draw [fill=black] (1,0) circle (2.5pt);
\draw [fill=black] (1,1) ++(-3.5pt,0 pt) -- ++(3.5pt,3.5pt)--++(3.5pt,-3.5pt)--++(-3.5pt,-3.5pt)--++(-3.5pt,3.5pt);
\draw [fill=black] (-0.02,-0.01) circle (2.5pt);
\draw [fill=black] (-0.02,-1.01) ++(-3.5pt,0 pt) -- ++(3.5pt,3.5pt)--++(3.5pt,-3.5pt)--++(-3.5pt,-3.5pt)--++(-3.5pt,3.5pt);
\draw [fill=black] (4,0) circle (2.5pt);
\draw [fill=black] (5,0) ++(-3.5pt,0 pt) -- ++(3.5pt,3.5pt)--++(3.5pt,-3.5pt)--++(-3.5pt,-3.5pt)--++(-3.5pt,3.5pt);
\draw [fill=black] (-1.98,0) circle (2.5pt);
\draw [fill=black] (-1.98,1) ++(-3.5pt,0 pt) -- ++(3.5pt,3.5pt)--++(3.5pt,-3.5pt)--++(-3.5pt,-3.5pt)--++(-3.5pt,3.5pt);
\draw [fill=black] (4.014083413745475,0.9782502824881779) circle (2.5pt);
\draw [fill=black] (3.779013559690542,2.008409348787737) ++(-3.5pt,0 pt) -- ++(3.5pt,3.5pt)--++(3.5pt,-3.5pt)--++(-3.5pt,-3.5pt)--++(-3.5pt,3.5pt);
\draw [fill=black] (4.249153267800409,2.008409348787737) ++(-3.5pt,0 pt) -- ++(3.5pt,3.5pt)--++(3.5pt,-3.5pt)--++(-3.5pt,-3.5pt)--++(-3.5pt,3.5pt);
\draw [fill=black] (3.3503567670021344,1.9945817103139172) ++(-3.5pt,0 pt) -- ++(3.5pt,3.5pt)--++(3.5pt,-3.5pt)--++(-3.5pt,-3.5pt)--++(-3.5pt,3.5pt);
\draw [fill=black] (4.719292975910273,1.9945817103139172) ++(-3.5pt,0 pt) -- ++(3.5pt,3.5pt)--++(3.5pt,-3.5pt)--++(-3.5pt,-3.5pt)--++(-3.5pt,3.5pt);
\end{scriptsize}
\end{tikzpicture} 
\caption{Relabeling after applications of \texttt{ReduceStarVertex}$(T, u_1)$ with $k=4$.}\label{fig:an_example:0405}
\end{figure}
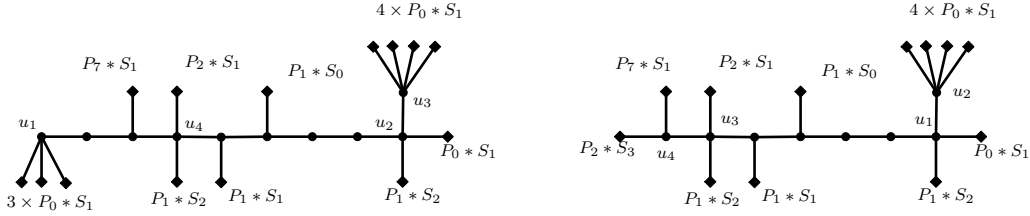

In Figure~\ref{fig:an_example:0405} (right), $k$ is four.
Applying  \texttt{ReduceStarVertex}$(T, u_1)$, eliminates
the starlike vertex $u_1$. The remaining three starlike vertices will be
labeled as $u_1$, $u_{2}$  and $u_{3}$  with weights $8,9$ and $17$, as in
Figure~\ref{fig:an_example:0607} (left).

We now have $k=3$ then we still
apply \texttt{ReduceStarVertex}$(T, u_1)$.
The starlike vertex $u_1$ is
eliminated
but a new starlike vertex is created as $P(u_2)=P_1 \ast S_3
\oplus P_1 \ast S_4$. Counting the weights we label new starlike vertices
$u_{1}$, $u_{2}$ and $u_{3}$ with weights
$9,16$~and~$17$.
This is
illustrated in Figure~\ref{fig:an_example:0607} (right).
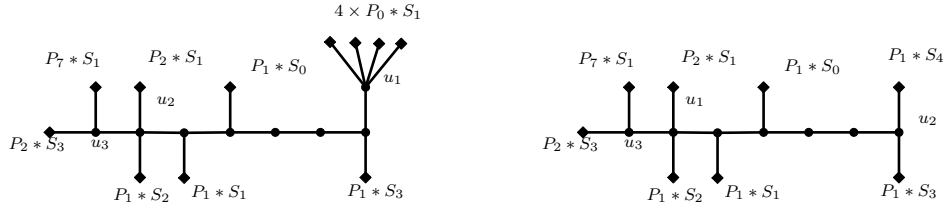
\begin{figure}[!h]
  \centering
\begin{tikzpicture}[line cap=round,line join=round,>=triangle 45,x=1cm,y=1cm,scale=0.6, every node/.style={scale=0.8}]
\clip(-4.9,-2.0) rectangle (6.5,3.44);
\draw (-2.28,0.04) node[anchor=north west] {{\tiny $u_{3}$}};
\draw (-4.1,0.08) node[anchor=north west] {{\tiny $P_2 \ast S_{3}$}};
\draw (-3.3,1.96) node[anchor=north west] {{\tiny $P_7 \ast S_{1}$}};
\draw [line width=1pt] (2,0)-- (3,0);
\draw [line width=1pt] (1,0)-- (1,1);
\draw [line width=1pt] (-0.02,-0.01)-- (-0.02,-1.01);
\draw [line width=1pt] (-1.98,0)-- (-1.98,1);
\draw [line width=1pt] (-1.98,0)-- (-1,0);
\draw [line width=1pt] (-1,0)-- (-0.02,-0.01);
\draw [line width=1pt] (-0.02,-0.01)-- (1,0);
\draw [line width=1pt] (1,0)-- (2,0);
\draw [line width=1pt] (4,1)-- (3.774868906623819,1.9849864155277592);
\draw [line width=1pt] (4,1)-- (4.300633049282667,1.9668566175050404);
\draw [line width=1pt] (4,0)-- (4,1);
\draw [line width=1pt] (4,0)-- (3,0);
\draw [line width=1pt] (3.2128451679195322,2.003116213550478)-- (4,1);
\draw [line width=1pt] (4,1)-- (4.790137595896079,1.9668566175050404);
\draw (3.12,3.02) node[anchor=north west] {{\tiny $4 \times P_0 \ast S_{1}$}};
\draw (1.26,1.8) node[anchor=north west] {{\tiny $P_1 \ast S_{0}$}};
\draw (-0.06,-0.98) node[anchor=north west] {{\tiny $P_1 \ast S_{1}$}};
\draw (4.2,1.48) node[anchor=north west] {{\tiny $u_{1}$}};
\draw (-0.82,0.98) node[anchor=north west] {{\tiny $u_{2}$}};
\draw (-1.78,-1.04) node[anchor=north west] {{\tiny $P_1 \ast S_{2}$}};
\draw [line width=1pt] (-1,1)-- (-1,0);
\draw (-1.02,2) node[anchor=north west] {{\tiny $P_2 \ast S_{1}$}};
\draw [line width=1pt] (-1,0)-- (-1,-1);
\draw (3.4,-0.96) node[anchor=north west] {{\tiny $P_1 \ast S_{3}$}};
\draw [line width=1pt] (4,0)-- (4,-1);
\draw [line width=1pt] (-3,0)-- (-1.98,0);
\begin{scriptsize}
\draw [fill=black] (-3,0) ++(-3.5pt,0 pt) -- ++(3.5pt,3.5pt)--++(3.5pt,-3.5pt)--++(-3.5pt,-3.5pt)--++(-3.5pt,3.5pt);
\draw [fill=black] (-1,0) circle (2.5pt);
\draw [fill=black] (2,0) circle (2.5pt);
\draw [fill=black] (3,0) circle (2.5pt);
\draw [fill=black] (-1,-1) ++(-3.5pt,0 pt) -- ++(3.5pt,3.5pt)--++(3.5pt,-3.5pt)--++(-3.5pt,-3.5pt)--++(-3.5pt,3.5pt);
\draw [fill=black] (-1,1) ++(-3.5pt,0 pt) -- ++(3.5pt,3.5pt)--++(3.5pt,-3.5pt)--++(-3.5pt,-3.5pt)--++(-3.5pt,3.5pt);
\draw [fill=black] (4,-1) ++(-3.5pt,0 pt) -- ++(3.5pt,3.5pt)--++(3.5pt,-3.5pt)--++(-3.5pt,-3.5pt)--++(-3.5pt,3.5pt);
\draw [fill=black] (1,0) circle (2.5pt);
\draw [fill=black] (1,1) ++(-3.5pt,0 pt) -- ++(3.5pt,3.5pt)--++(3.5pt,-3.5pt)--++(-3.5pt,-3.5pt)--++(-3.5pt,3.5pt);
\draw [fill=black] (-0.02,-0.01) circle (2.5pt);
\draw [fill=black] (-0.02,-1.01) ++(-3.5pt,0 pt) -- ++(3.5pt,3.5pt)--++(3.5pt,-3.5pt)--++(-3.5pt,-3.5pt)--++(-3.5pt,3.5pt);
\draw [fill=black] (4,0) circle (2.5pt);
\draw [fill=black] (-1.98,0) circle (2.5pt);
\draw [fill=black] (-1.98,1) ++(-3.5pt,0 pt) -- ++(3.5pt,3.5pt)--++(3.5pt,-3.5pt)--++(-3.5pt,-3.5pt)--++(-3.5pt,3.5pt);
\draw [fill=black] (4,1) circle (2.5pt);
\draw [fill=black] (3.774868906623819,1.9849864155277592) ++(-3.5pt,0 pt) -- ++(3.5pt,3.5pt)--++(3.5pt,-3.5pt)--++(-3.5pt,-3.5pt)--++(-3.5pt,3.5pt);
\draw [fill=black] (4.300633049282667,1.9668566175050404) ++(-3.5pt,0 pt) -- ++(3.5pt,3.5pt)--++(3.5pt,-3.5pt)--++(-3.5pt,-3.5pt)--++(-3.5pt,3.5pt);
\draw [fill=black] (3.2128451679195322,2.003116213550478) ++(-3.5pt,0 pt) -- ++(3.5pt,3.5pt)--++(3.5pt,-3.5pt)--++(-3.5pt,-3.5pt)--++(-3.5pt,3.5pt);
\draw [fill=black] (4.790137595896079,1.9668566175050404) ++(-3.5pt,0 pt) -- ++(3.5pt,3.5pt)--++(3.5pt,-3.5pt)--++(-3.5pt,-3.5pt)--++(-3.5pt,3.5pt);
\end{scriptsize}
\end{tikzpicture}\;\begin{tikzpicture}[line cap=round,line join=round,>=triangle 45,x=1cm,y=1cm,scale=0.6, every node/.style={scale=0.8}]
\clip(-4.9,-2.0) rectangle (6.5,3.44);
\draw (-2.28,0.04) node[anchor=north west] {{\tiny $u_{3}$}};
\draw (-4.1,0.08) node[anchor=north west] {{\tiny $P_2 \ast S_{3}$}};
\draw (-3.3,1.96) node[anchor=north west] {{\tiny $P_7 \ast S_{1}$}};
\draw [line width=1pt] (2,0)-- (3,0);
\draw [line width=1pt] (1,0)-- (1,1);
\draw [line width=1pt] (-0.02,-0.01)-- (-0.02,-1.01);
\draw [line width=1pt] (-1.98,0)-- (-1.98,1);
\draw [line width=1pt] (-1.98,0)-- (-1,0);
\draw [line width=1pt] (-1,0)-- (-0.02,-0.01);
\draw [line width=1pt] (-0.02,-0.01)-- (1,0);
\draw [line width=1pt] (1,0)-- (2,0);
\draw [line width=1pt] (4,0)-- (4,1);
\draw [line width=1pt] (4,0)-- (3,0);
\draw (1.26,1.8) node[anchor=north west] {{\tiny $P_1 \ast S_{0}$}};
\draw (-0.06,-0.98) node[anchor=north west] {{\tiny $P_1 \ast S_{1}$}};
\draw (-0.92,0.98) node[anchor=north west] {{\tiny $u_{1}$}};
\draw (4.24,0.56) node[anchor=north west] {{\tiny $u_{2}$}};
\draw (-1.78,-1.04) node[anchor=north west] {{\tiny $P_1 \ast S_{2}$}};
\draw [line width=1pt] (-1,1)-- (-1,0);
\draw (-1.02,2) node[anchor=north west] {{\tiny $P_2 \ast S_{1}$}};
\draw [line width=1pt] (-1,0)-- (-1,-1);
\draw (3.4,-0.96) node[anchor=north west] {{\tiny $P_1 \ast S_{3}$}};
\draw [line width=1pt] (4,0)-- (4,-1);
\draw [line width=1pt] (-3,0)-- (-1.98,0);
\draw (3.56,2.06) node[anchor=north west] {{\tiny $P_1 \ast S_{4}$}};
\begin{scriptsize}
\draw [fill=black] (-3,0) ++(-3.5pt,0 pt) -- ++(3.5pt,3.5pt)--++(3.5pt,-3.5pt)--++(-3.5pt,-3.5pt)--++(-3.5pt,3.5pt);
\draw [fill=black] (-1,0) circle (2.5pt);
\draw [fill=black] (2,0) circle (2.5pt);
\draw [fill=black] (3,0) circle (2.5pt);
\draw [fill=black] (-1,-1) ++(-3.5pt,0 pt) -- ++(3.5pt,3.5pt)--++(3.5pt,-3.5pt)--++(-3.5pt,-3.5pt)--++(-3.5pt,3.5pt);
\draw [fill=black] (-1,1) ++(-3.5pt,0 pt) -- ++(3.5pt,3.5pt)--++(3.5pt,-3.5pt)--++(-3.5pt,-3.5pt)--++(-3.5pt,3.5pt);
\draw [fill=black] (4,-1) ++(-3.5pt,0 pt) -- ++(3.5pt,3.5pt)--++(3.5pt,-3.5pt)--++(-3.5pt,-3.5pt)--++(-3.5pt,3.5pt);
\draw [fill=black] (1,0) circle (2.5pt);
\draw [fill=black] (1,1) ++(-3.5pt,0 pt) -- ++(3.5pt,3.5pt)--++(3.5pt,-3.5pt)--++(-3.5pt,-3.5pt)--++(-3.5pt,3.5pt);
\draw [fill=black] (-0.02,-0.01) circle (2.5pt);
\draw [fill=black] (-0.02,-1.01) ++(-3.5pt,0 pt) -- ++(3.5pt,3.5pt)--++(3.5pt,-3.5pt)--++(-3.5pt,-3.5pt)--++(-3.5pt,3.5pt);
\draw [fill=black] (4,0) circle (2.5pt);
\draw [fill=black] (-1.98,0) circle (2.5pt);
\draw [fill=black] (-1.98,1) ++(-3.5pt,0 pt) -- ++(3.5pt,3.5pt)--++(3.5pt,-3.5pt)--++(-3.5pt,-3.5pt)--++(-3.5pt,3.5pt);
\draw [fill=black] (4,1) ++(-3.5pt,0 pt) -- ++(3.5pt,3.5pt)--++(3.5pt,-3.5pt)--++(-3.5pt,-3.5pt)--++(-3.5pt,3.5pt);
\end{scriptsize}
\end{tikzpicture} 
\caption{Relabeling after applications of \texttt{ReduceStarVertex}$(T, u_1)$ with $k=4$ and $k=3$, respectively.} \label{fig:an_example:0607}
\end{figure}

In Figure~\ref{fig:an_example:0607} (right), we have $k=3$.
After \texttt{ReduceStarVertex}$(T, u_1)$, the starlike vertex $u_1$ is just
eliminated  and the remaining two starlike vertices  will be labeled as $u$
and $v$  with weights $16$ and $17$, as Figure~\ref{fig:an_example:0809}
(left) illustrates.

Since $k=2$,
we should apply
\texttt{ReduceStarVertex}$(T, u)$,
because $w(u)=16 \leq 2r=26$. After \texttt{ReduceStarVertex}$(T, u)$, the
starlike vertex $u$ is eliminated but a new starlike vertex is created.
Counting the weights, the remaining two starlike vertices  will be labeled as
$u$  and $v$
with weights $17$ and $20$,
Figure~\ref{fig:an_example:0809}
(right).
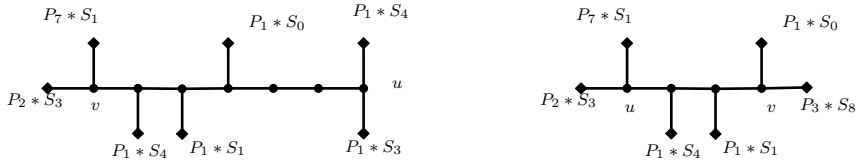
\begin{figure}[!ht]
  \centering
\begin{tikzpicture}[line cap=round,line join=round,>=triangle 45,x=1cm,y=1cm,scale=0.6, every node/.style={scale=0.8}]
\clip(-4.9,-2.0) rectangle (6.5,2.44);
\draw (-2.24,-0.12) node[anchor=north west] {{\tiny $v$}};
\draw (-4.1,0.08) node[anchor=north west] {{\tiny $P_2 \ast S_{3}$}};
\draw (-3.3,1.96) node[anchor=north west] {{\tiny $P_7 \ast S_{1}$}};
\draw [line width=1pt] (2,0)-- (3,0);
\draw [line width=1pt] (1,0)-- (1,1);
\draw [line width=1pt] (-0.02,-0.01)-- (-0.02,-1.01);
\draw [line width=1pt] (-1.98,0)-- (-1.98,1);
\draw [line width=1pt] (-1.98,0)-- (-1,0);
\draw [line width=1pt] (-1,0)-- (-0.02,-0.01);
\draw [line width=1pt] (-0.02,-0.01)-- (1,0);
\draw [line width=1pt] (1,0)-- (2,0);
\draw [line width=1pt] (4,0)-- (4,1);
\draw [line width=1pt] (4,0)-- (3,0);
\draw (1.26,1.8) node[anchor=north west] {{\tiny $P_1 \ast S_{0}$}};
\draw (-0.06,-0.98) node[anchor=north west] {{\tiny $P_1 \ast S_{1}$}};
\draw (4.44,0.36) node[anchor=north west] {{\tiny $u$}};
\draw (-1.78,-1.04) node[anchor=north west] {{\tiny $P_1 \ast S_{4}$}};
\draw [line width=1pt] (-1,0)-- (-1,-1);
\draw (3.4,-0.96) node[anchor=north west] {{\tiny $P_1 \ast S_{3}$}};
\draw [line width=1pt] (4,0)-- (4,-1);
\draw [line width=1pt] (-3,0)-- (-1.98,0);
\draw (3.56,2.06) node[anchor=north west] {{\tiny $P_1 \ast S_{4}$}};
\begin{scriptsize}
\draw [fill=black] (-3,0) ++(-3.5pt,0 pt) -- ++(3.5pt,3.5pt)--++(3.5pt,-3.5pt)--++(-3.5pt,-3.5pt)--++(-3.5pt,3.5pt);
\draw [fill=black] (-1,0) circle (2.5pt);
\draw [fill=black] (2,0) circle (2.5pt);
\draw [fill=black] (3,0) circle (2.5pt);
\draw [fill=black] (-1,-1) ++(-3.5pt,0 pt) -- ++(3.5pt,3.5pt)--++(3.5pt,-3.5pt)--++(-3.5pt,-3.5pt)--++(-3.5pt,3.5pt);
\draw [fill=black] (4,-1) ++(-3.5pt,0 pt) -- ++(3.5pt,3.5pt)--++(3.5pt,-3.5pt)--++(-3.5pt,-3.5pt)--++(-3.5pt,3.5pt);
\draw [fill=black] (1,0) circle (2.5pt);
\draw [fill=black] (1,1) ++(-3.5pt,0 pt) -- ++(3.5pt,3.5pt)--++(3.5pt,-3.5pt)--++(-3.5pt,-3.5pt)--++(-3.5pt,3.5pt);
\draw [fill=black] (-0.02,-0.01) circle (2.5pt);
\draw [fill=black] (-0.02,-1.01) ++(-3.5pt,0 pt) -- ++(3.5pt,3.5pt)--++(3.5pt,-3.5pt)--++(-3.5pt,-3.5pt)--++(-3.5pt,3.5pt);
\draw [fill=black] (4,0) circle (2.5pt);
\draw [fill=black] (-1.98,0) circle (2.5pt);
\draw [fill=black] (-1.98,1) ++(-3.5pt,0 pt) -- ++(3.5pt,3.5pt)--++(3.5pt,-3.5pt)--++(-3.5pt,-3.5pt)--++(-3.5pt,3.5pt);
\draw [fill=black] (4,1) ++(-3.5pt,0 pt) -- ++(3.5pt,3.5pt)--++(3.5pt,-3.5pt)--++(-3.5pt,-3.5pt)--++(-3.5pt,3.5pt);
\end{scriptsize}
\end{tikzpicture} \;\begin{tikzpicture}[line cap=round,line join=round,>=triangle 45,x=1cm,y=1cm,scale=0.6, every node/.style={scale=0.8}]
\clip(-4.9,-2.0) rectangle (6.5,2.44);
\draw (0.92,-0.18) node[anchor=north west] {{\tiny $v$}};
\draw (-4.1,0.08) node[anchor=north west] {{\tiny $P_2 \ast S_{3}$}};
\draw (-3.3,1.96) node[anchor=north west] {{\tiny $P_7 \ast S_{1}$}};
\draw [line width=1pt] (1,0)-- (1,1);
\draw [line width=1pt] (-0.02,-0.01)-- (-0.02,-1.01);
\draw [line width=1pt] (-1.98,0)-- (-1.98,1);
\draw [line width=1pt] (-1.98,0)-- (-1,0);
\draw [line width=1pt] (-1,0)-- (-0.02,-0.01);
\draw [line width=1pt] (-0.02,-0.01)-- (1,0);
\draw (1.26,1.8) node[anchor=north west] {{\tiny $P_1 \ast S_{0}$}};
\draw (-0.06,-0.98) node[anchor=north west] {{\tiny $P_1 \ast S_{1}$}};
\draw (-2.22,-0.16) node[anchor=north west] {{\tiny $u$}};
\draw (-1.78,-1.04) node[anchor=north west] {{\tiny $P_1 \ast S_{4}$}};
\draw [line width=1pt] (-1,0)-- (-1,-1);
\draw (1.66,-0.02) node[anchor=north west] {{\tiny $P_3 \ast S_{8}$}};
\draw [line width=1pt] (-3,0)-- (-1.98,0);
\draw [line width=1pt] (1,0)-- (2,0.02);
\begin{scriptsize}
\draw [fill=black] (-3,0) ++(-3.5pt,0 pt) -- ++(3.5pt,3.5pt)--++(3.5pt,-3.5pt)--++(-3.5pt,-3.5pt)--++(-3.5pt,3.5pt);
\draw [fill=black] (-1,0) circle (2.5pt);
\draw [fill=black] (-1,-1) ++(-3.5pt,0 pt) -- ++(3.5pt,3.5pt)--++(3.5pt,-3.5pt)--++(-3.5pt,-3.5pt)--++(-3.5pt,3.5pt);
\draw [fill=black] (1,0) circle (2.5pt);
\draw [fill=black] (1,1) ++(-3.5pt,0 pt) -- ++(3.5pt,3.5pt)--++(3.5pt,-3.5pt)--++(-3.5pt,-3.5pt)--++(-3.5pt,3.5pt);
\draw [fill=black] (-0.02,-0.01) circle (2.5pt);
\draw [fill=black] (-0.02,-1.01) ++(-3.5pt,0 pt) -- ++(3.5pt,3.5pt)--++(3.5pt,-3.5pt)--++(-3.5pt,-3.5pt)--++(-3.5pt,3.5pt);
\draw [fill=black] (2,0.02) ++(-3.5pt,0 pt) -- ++(3.5pt,3.5pt)--++(3.5pt,-3.5pt)--++(-3.5pt,-3.5pt)--++(-3.5pt,3.5pt);
\draw [fill=black] (-1.98,0) circle (2.5pt);
\draw [fill=black] (-1.98,1) ++(-3.5pt,0 pt) -- ++(3.5pt,3.5pt)--++(3.5pt,-3.5pt)--++(-3.5pt,-3.5pt)--++(-3.5pt,3.5pt);
\end{scriptsize}
\end{tikzpicture}
\caption{Relabeling after applications of \texttt{ReduceStarVertex}$(T, u_1)$
 with $k=3$ and $k=2$, respectively.}\label{fig:an_example:0809}
\end{figure}

In Figure~\ref{fig:an_example:0809} (right), we have $k=2$, and because $w(u)=17 \leq 2r=26$, we should
apply \texttt{ReduceStarVertex}$(T, u)$.
After \texttt{Re\-du\-ce\-Star\-Ver\-tex}$(T,
u)$, the starlike vertex $u$ is eliminated but a new starlike vertex is created.
Counting the weights, the remaining two starlike vertices  will be
labeled as $u$  and $v$  with weights $20$ and $27$, respectively, as in
Figure~\ref{fig:an_example:1011} (left).

\begin{figure}[!ht]
  \centering
\begin{tikzpicture}[line cap=round,line join=round,>=triangle 45,x=1cm,y=1cm,scale=0.6, every node/.style={scale=0.8}]
\clip(-4.9,-2.0) rectangle (6.5,2.44);
\draw (-0.94,0.86) node[anchor=north west] {{\tiny $v$}};
\draw (-3.06,-0.04) node[anchor=north west] {{\tiny $P_2 \ast S_{8}$}};
\draw [line width=1pt] (1,0)-- (1,1);
\draw [line width=1pt] (-0.02,-0.01)-- (-0.02,-1.01);
\draw [line width=1pt] (-1,0)-- (-0.02,-0.01);
\draw [line width=1pt] (-0.02,-0.01)-- (1,0);
\draw (1.26,1.8) node[anchor=north west] {{\tiny $P_1 \ast S_{0}$}};
\draw (-0.06,-0.98) node[anchor=north west] {{\tiny $P_1 \ast S_{1}$}};
\draw (1.12,-0.06) node[anchor=north west] {{\tiny $u$}};
\draw (-1.78,-1.04) node[anchor=north west] {{\tiny $P_1 \ast S_{4}$}};
\draw [line width=1pt] (-1,0)-- (-1,-1);
\draw (1.7,-0.04) node[anchor=north west] {{\tiny $P_3 \ast S_{8}$}};
\draw [line width=1pt] (-2,0)-- (-1,0);
\draw [line width=1pt] (1,0)-- (2,0);
\begin{scriptsize}
\draw [fill=black] (-2,0) ++(-3.5pt,0 pt) -- ++(3.5pt,3.5pt)--++(3.5pt,-3.5pt)--++(-3.5pt,-3.5pt)--++(-3.5pt,3.5pt);
\draw [fill=black] (-1,0) circle (2.5pt);
\draw [fill=black] (-1,-1) ++(-3.5pt,0 pt) -- ++(3.5pt,3.5pt)--++(3.5pt,-3.5pt)--++(-3.5pt,-3.5pt)--++(-3.5pt,3.5pt);
\draw [fill=black] (1,0) circle (2.5pt);
\draw [fill=black] (1,1) ++(-3.5pt,0 pt) -- ++(3.5pt,3.5pt)--++(3.5pt,-3.5pt)--++(-3.5pt,-3.5pt)--++(-3.5pt,3.5pt);
\draw [fill=black] (-0.02,-0.01) circle (2.5pt);
\draw [fill=black] (-0.02,-1.01) ++(-3.5pt,0 pt) -- ++(3.5pt,3.5pt)--++(3.5pt,-3.5pt)--++(-3.5pt,-3.5pt)--++(-3.5pt,3.5pt);
\draw [fill=black] (2,0) ++(-3.5pt,0 pt) -- ++(3.5pt,3.5pt)--++(3.5pt,-3.5pt)--++(-3.5pt,-3.5pt)--++(-3.5pt,3.5pt);
\end{scriptsize}
\end{tikzpicture}\;\begin{tikzpicture}[line cap=round,line join=round,>=triangle 45,x=1cm,y=1cm,scale=0.6, every node/.style={scale=0.8}]
\clip(-4.9,-2.0) rectangle (6.5,2.44);
\draw (-0.94,0.86) node[anchor=north west] {{\tiny $v$}};
\draw (-3.06,-0.04) node[anchor=north west] {{\tiny $P_2 \ast S_{8}$}};
\draw [line width=1pt] (-0.02,-0.01)-- (-0.02,-1.01);
\draw [line width=1pt] (-1,0)-- (-0.02,-0.01);
\draw (-0.06,-0.98) node[anchor=north west] {{\tiny $P_1 \ast S_{1}$}};
\draw (-0.02,0.86) node[anchor=north west] {{\tiny $u$}};
\draw (-1.78,-1.04) node[anchor=north west] {{\tiny $P_1 \ast S_{4}$}};
\draw [line width=1pt] (-1,0)-- (-1,-1);
\draw (0.98,-0.06) node[anchor=north west] {{\tiny $P_1 \ast S_{10}$}};
\draw [line width=1pt] (-2,0)-- (-1,0);
\draw [line width=1pt] (-0.02,-0.01)-- (1,0);
\begin{scriptsize}
\draw [fill=black] (-2,0) ++(-3.5pt,0 pt) -- ++(3.5pt,3.5pt)--++(3.5pt,-3.5pt)--++(-3.5pt,-3.5pt)--++(-3.5pt,3.5pt);
\draw [fill=black] (-1,0) circle (2.5pt);
\draw [fill=black] (-1,-1) ++(-3.5pt,0 pt) -- ++(3.5pt,3.5pt)--++(3.5pt,-3.5pt)--++(-3.5pt,-3.5pt)--++(-3.5pt,3.5pt);
\draw [fill=black] (-0.02,-0.01) circle (2.5pt);
\draw [fill=black] (-0.02,-1.01) ++(-3.5pt,0 pt) -- ++(3.5pt,3.5pt)--++(3.5pt,-3.5pt)--++(-3.5pt,-3.5pt)--++(-3.5pt,3.5pt);
\draw [fill=black] (1,0) ++(-3.5pt,0 pt) -- ++(3.5pt,3.5pt)--++(3.5pt,-3.5pt)--++(-3.5pt,-3.5pt)--++(-3.5pt,3.5pt);
\end{scriptsize}
\end{tikzpicture} 
\caption{Relabeling after applications of \texttt{ReduceStarVertex}$(T, u_1)$
 with $k=2$.}\label{fig:an_example:1011}
\end{figure}
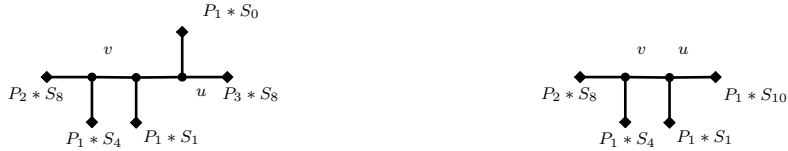
We still have $k=2$ so we should
apply \texttt{ReduceStarVertex}$(T, u)$,
because $w(u) = 20 \leq 26$.
After \texttt{ReduceStarVertex}$(T, u)$, the
starlike vertex $u$ is eliminated but a new starlike vertex is created.
The remaining two starlike vertices  will be labeled as
$u$  and $v$  with weights $24$ and $27$, as illustrated in
Figure~\ref{fig:an_example:1011} (right).

In Figure~\ref{fig:an_example:1011} (right), $k$ remains at two, so
we should apply \texttt{Re\-du\-ce\-Star\-Ver\-tex}$(T, u)$, because $w(u)=24 \leq 2r=26$.
After this transformation
the starlike vertex
$u$ is eliminated.
There remains  only one starlike vertex  which will be labeled $u$ and $T= u+
P_{2} \ast S_{8} \oplus P_{1}\ast S_{4} \oplus P_{1}\ast S_{12}$, which is
illustrated in Figure~\ref{fig:an_example:1213}~(left).
\begin{figure}[!ht]
  \centering
\begin{tikzpicture}[line cap=round,line join=round,>=triangle 45,x=1cm,y=1cm,scale=0.6, every node/.style={scale=0.8}]
\clip(-2.9,-1.4) rectangle (3.0,1.4);
\draw (-3.06,-0.04) node[anchor=north west] {{\tiny $P_2 \ast S_{8}$}};
\draw (-0.18,-0.16) node[anchor=north west] {{\tiny $P_1 \ast S_{12}$}};
\draw (-0.44,1.44) node[anchor=north west] {{\tiny $u$}};
\draw (-1.46,-0.58) node[anchor=north west] {{\tiny $P_1 \ast S_{4}$}};
\draw [line width=1pt] (-1,1)-- (-1.0111987877942101,-0.44598695065040167);
\draw [line width=1pt] (-2,0)-- (-1,1);
\draw [line width=1pt] (-1,1)-- (0,0);
\begin{scriptsize}
\draw [fill=black] (-2,0) ++(-3.5pt,0 pt) -- ++(3.5pt,3.5pt)--++(3.5pt,-3.5pt)--++(-3.5pt,-3.5pt)--++(-3.5pt,3.5pt);
\draw [fill=black] (-1,1) circle (2.5pt);
\draw [fill=black] (-1.0111987877942101,-0.44598695065040167) ++(-3.5pt,0 pt) -- ++(3.5pt,3.5pt)--++(3.5pt,-3.5pt)--++(-3.5pt,-3.5pt)--++(-3.5pt,3.5pt);
\draw [fill=black] (0,0) ++(-3.5pt,0 pt) -- ++(3.5pt,3.5pt)--++(3.5pt,-3.5pt)--++(-3.5pt,-3.5pt)--++(-3.5pt,3.5pt);
\end{scriptsize}
\end{tikzpicture} \;\begin{tikzpicture}[line cap=round,line join=round,>=triangle 45,x=1cm,y=1cm,scale=0.6, every node/.style={scale=0.8}]
\clip(-2.9,-1.4) rectangle (3.0,1.4);
\draw (-0.18,-0.16) node[anchor=north west] {{\tiny $P_1 \ast S_{12}$}};
\draw (-0.44,1.44) node[anchor=north west] {{\tiny $u$}};
\draw (-2.3,-0.24) node[anchor=north west] {{\tiny $P_1 \ast S_{13}$}};
\draw [line width=1pt] (-1,1)-- (-1.7,-0.16);
\draw [line width=1pt] (-1,1)-- (0,0);
\begin{scriptsize}
\draw [fill=black] (-1,1) circle (2.5pt);
\draw [fill=black] (-1.7,-0.16) ++(-3.5pt,0 pt) -- ++(3.5pt,3.5pt)--++(3.5pt,-3.5pt)--++(-3.5pt,-3.5pt)--++(-3.5pt,3.5pt);
\draw [fill=black] (0,0) ++(-3.5pt,0 pt) -- ++(3.5pt,3.5pt)--++(3.5pt,-3.5pt)--++(-3.5pt,-3.5pt)--++(-3.5pt,3.5pt);
\end{scriptsize}
\end{tikzpicture} 
\caption{Relabeling after application of \texttt{ReduceStarVertex}$(T, u_1)$
 with $k=2$, and a single starlike remaining.}\label{fig:an_example:1213}
\end{figure}
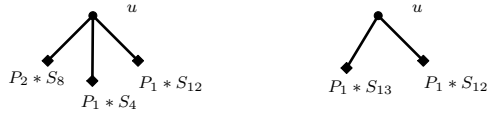

We now have $k=1$. Then, by Theorem~\ref{th:starlike}, we should apply the transformation
Star-up (Proposition \ref{star-up}) where it is possible.
In this case  $P_{2} \ast S_{8} \to P_{0} \ast
S_{9}$.
Therefore, $T$ is transformed to $u+ P_{0} \ast S_{9} \oplus P_{1}\ast S_{4} \oplus P_{1}\ast S_{12}$.
Here, $L=3$ and $\ell_0=3$ because $r=13$.
As prescribed by
Case 3 of
Theorem~\ref{th:starlike},
we perform
\texttt{Procedure ReduceStarVertex}
on two summands (e.g. case $\alpha=1$, (ii) $q_1=0, s_1=9, q_2=1, s_2=4, q'=1, s'=12$),
$P_{0} \ast S_{9} \oplus P_{1} \ast S_{4} \to  P_{1} \ast S_{13}$,
as Figure~\ref{fig:an_example:1213} (right) illustrates.

\begin{figure}[!h]
  \centering
\begin{tikzpicture}[line cap=round,line join=round,>=triangle 45,x=1cm,y=1cm,scale=0.7, every node/.style={scale=0.8}]
\clip(-9.9,-1.0) rectangle (6.5,3.4);
\draw (-5.16,-0.18) node[anchor=north west] {{\tiny $P_1 \ast S_{12}$}};
\draw (-5.42,1.42) node[anchor=north west] {{\tiny $u$}};
\draw (-7.48,-0.12) node[anchor=north west] {{\tiny $P_1 \ast S_{13}$}};
\draw [line width=1pt] (-5.98,0.98)-- (-7,0);
\draw [line width=1pt] (-5.98,0.98)-- (-4.98,-0.02);
\draw (-1.14,1.78) node[anchor=north west] {{\tiny $P_2 \ast S_{12}$}};
\draw (-1.4,3.38) node[anchor=north west] {{\tiny $v$}};
\draw (-3.46,1.84) node[anchor=north west] {{\tiny $P_0 \ast S_{13}$}};
\draw [line width=1pt] (-1.96,2.94)-- (-2.98,1.96);
\draw [line width=1pt] (-1.96,2.94)-- (-0.96,1.94);
\draw (-7.28,0.94) node[anchor=north west] {{\tiny $v$}};
\draw (2.82,-0.18) node[anchor=north west] {{\tiny $P_0 \ast S_{13}$}};
\draw (2.56,1.42) node[anchor=north west] {{\tiny $v$}};
\draw (0.5,-0.12) node[anchor=north west] {{\tiny $P_0 \ast S_{13}$}};
\draw [line width=1pt] (2,0.98)-- (0.98,0);
\draw [line width=1pt] (2,0.98)-- (3,-0.02);
\draw [->,line width=1pt] (-5,2) -- (-4,2);
\draw [->,line width=1pt] (0,2) -- (1,2);
\begin{scriptsize}
\draw [fill=black] (-5.98,0.98) circle (2.5pt);
\draw [fill=black] (-7,0) ++(-3.5pt,0 pt) -- ++(3.5pt,3.5pt)--++(3.5pt,-3.5pt)--++(-3.5pt,-3.5pt)--++(-3.5pt,3.5pt);
\draw [fill=black] (-4.98,-0.02) ++(-3.5pt,0 pt) -- ++(3.5pt,3.5pt)--++(3.5pt,-3.5pt)--++(-3.5pt,-3.5pt)--++(-3.5pt,3.5pt);
\draw [fill=black] (-1.96,2.94) circle (2.5pt);
\draw [fill=black] (-2.98,1.96) ++(-3.5pt,0 pt) -- ++(3.5pt,3.5pt)--++(3.5pt,-3.5pt)--++(-3.5pt,-3.5pt)--++(-3.5pt,3.5pt);
\draw [fill=black] (-0.96,1.94) ++(-3.5pt,0 pt) -- ++(3.5pt,3.5pt)--++(3.5pt,-3.5pt)--++(-3.5pt,-3.5pt)--++(-3.5pt,3.5pt);
\draw [fill=black] (2,0.98) circle (2.5pt);
\draw [fill=black] (0.98,0) ++(-3.5pt,0 pt) -- ++(3.5pt,3.5pt)--++(3.5pt,-3.5pt)--++(-3.5pt,-3.5pt)--++(-3.5pt,3.5pt);
\draw [fill=black] (3,-0.02) ++(-3.5pt,0 pt) -- ++(3.5pt,3.5pt)--++(3.5pt,-3.5pt)--++(-3.5pt,-3.5pt)--++(-3.5pt,3.5pt);
\end{scriptsize}
\end{tikzpicture} 
\caption{Applications of \texttt{Star-up} and \texttt{ReduceStarVertex}.}\label{fig:an_example:end}
\end{figure}

Finally, we have $T=u+ P_{1} \ast S_{13} \oplus  P_{1}\ast S_{12}$,
and $k=0$ meaning that there are no starlike vertices anymore.
As $r_2 < r$ we are in Case 2 of Theorem~\ref{th:path}.
Recall that we change the root from $u$ to the vertex $v$ obtaining
$T'=v+ P_{0} \ast S_{13} \oplus P_{2}\ast S_{12}$.
Then we apply Star-up in $P_{2}\ast S_{12} \to P_{0} \ast S_{13}$.
We reached our goal.  That is, $T_1 = v + P_{0} \ast S_{13} \oplus  P_{0}\ast S_{13}$
as expected.
These final transformations are illustrated in
Figure~\ref{fig:an_example:end}.

\section*{Acknowledgments}

It just came to our knowledge, after acceptance of the manuscript, that the paper \cite{SIN2020} by Changhyon Sin, also proves the conjecture. We are grateful to the referees whose careful reading improved the paper, and pointed out some errors in the original submission. Vilmar Trevisan acknowledges partial support of CNPq grants  409746/2016-9 and 303334/2016-9,  CAPES-PRINT 88887.467572/2019-00, and FAPERGS PqG 17/2551-0001.
\bibliographystyle{acm}
\bibliography{refs}
\end{document}